\newtheorem{theorem}{Theorem}
\newtheorem{lemma}[theorem]{Lemma}
\newtheorem{corollary}[theorem]{Corollary}
\newtheorem{definition}[theorem]{Definition}
\newtheorem{conjecture}[theorem]{Conjecture}
\newtheorem{remark}{Remark}
\newtheorem{proposition}[theorem]{Proposition}
\newtheorem{case}{Case}
\def \circ {14pt}
\def \redstar {\textbf{\Large\color{red}{$\star$}}}
\DeclareMathOperator{\ex}{ex}
\title{Planar Tur\'an Number of the 6-Cycle}
\author[1]{Debarun Ghosh}
\author[1,2]{Ervin Gy\H{o}ri}
\author[3]{Ryan R. Martin}
\author[1]{\newline Addisu Paulos}
\author[1]{Chuanqi Xiao}
\affil[1]{Central European University, Budapest\par
\texttt{ chuanqixm@gmail.com, ghosh\textunderscore debarun@phd.ceu.edu,addisu\textunderscore 2004@yahoo.com}}
\affil[2]{Alfr\'ed R\'enyi Institute of Mathematics, Budapest \par
\texttt{gyori.ervin@renyi.mta.hu}}
\affil[3]{Iowa State University, Ames, IA, USA \par
\texttt{rymartin@iastate.edu}}
\date{}
\begin{document}
\maketitle
\baselineskip=0.30in
\begin{abstract}
    Let ${\rm ex}_{\mathcal{P}}(n,T,H)$ denote the maximum number of copies of $T$ in an $n$-vertex planar graph which does not contain $H$ as a subgraph. When $T=K_2$, ${\rm ex}_{\mathcal{P}}(n,T,H)$ is the well studied function, the planar Tur\'an number of $H$, denoted by ${\rm ex}_{\mathcal{P}}(n,H)$. The topic of extremal planar graphs was initiated by Dowden (2016). He obtained sharp upper bound for both ${\rm ex}_{\mathcal{P}}(n,C_4)$ and ${\rm ex}_{\mathcal{P}}(n,C_5)$. Later on, Y. Lan, et al. continued this topic and proved that ${\rm ex}_{\mathcal{P}}(n,C_6)\leq \frac{18(n-2)}{7}$. In this paper, we give a sharp upper bound ${\rm ex}_{\mathcal{P}}(n,C_6) \leq \frac{5}{2}n-7$, for all $n\geq 18$, which improves Lan's result. We also pose a conjecture on ${\rm ex}_{\mathcal{P}}(n,C_k)$, for $k\geq 7$.
\end{abstract}
{\bf Keywords}\ \ Planar Tur\'an number, Extremal planar graph

\section{Introduction and Main Results}
In this paper, all graphs considered are planar, undirected, finite and contain neither loops nor multiple edges. We use $C_k$ to denote the cycle on $k$ vertices and $K_r$ to denote the complete graph on $r$ vertices.

One of the well-known results in extremal graph theory is the Tur\'an Theorem \cite{TP}, which gives the maximum number of edges that a graph on $n$ vertices can have without containing a $K_r$ as a subgraph. The Erd\H{o}s-Stone-Simonovits Theorem \cite{EP1,EP2} then generalized this result and asymptotically determines $\ex(n,H)$ for all non-bipartite graphs $H$: $\ex(n,H)=(1-\frac{1}{\chi(H)-1})\binom{n}{2}+o(n^{2})$, where $\chi(H)$ denotes the chromatic number of $H$. Over the last decade, a considerable amount of research work has been carried out in Tur\'an-type problems, i.e., when host graphs are $K_n$, $k$-uniform hypergraphs or $k$-partite graphs, see \cite{EP2,zykov}.

In 2016, Dowden \cite{DC} initiated the study of Tur\'an-type problems when host graphs are planar, i.e., how many edges can a planar graph on $n$ vertices have, without containing a given smaller graph? The planar Tur\'an number of a graph $H$, $\ex_{\mathcal{P}}(n,H)$, is the maximum number of edges in a planar graph on $n$ vertices which does not contain $H$ as a subgraph.  Dowden \cite{DC} obtained the tight bounds  $\ex_{\mathcal{P}}(n,C_4) \leq\frac{15(n-2)}{7}$, for all $n\geq 4$ and $\ex_{\mathcal{P}}(n,C_5) \leq\frac{12n-33}{5}$, for all $n\geq 11$.  Later on, Y. Lan, et al.~\cite{LY} obtained bounds $\ex_{\mathcal{P}}(n,\Theta_4)\leq \frac{12(n-2)}{5}$, for all $n\geq 4$, $\ex_{\mathcal{P}}(n,\Theta_5)\leq \frac{5(n-2)}{2}$, for all $n\geq 5$ and
$\ex_{\mathcal{P}}(n,\Theta_6)\leq \frac{18(n-2)}{7}$, for all $n\geq 7$, where $\Theta_k$ is obtained from a cycle $C_k$ by adding an additional edge joining any two non-consecutive vertices. They also demonstrated that their bounds for $\Theta_4$ and $\Theta_5$ are tight by showing infinitely many values of $n$ and planar graph on $n$ vertices attaining the stated bounds. As a consequence of the bound for $\Theta_6$ in the same paper, they presented the following corollary.

\begin{corollary}[Y. Lan, et al.\cite{LY}]
    \begin{align*}
        \ex_{\mathcal{P}}(n, C_6)\leq \frac{18(n-2)}{7}
    \end{align*}
    for all $n\geq 6$, with equality when $n=9$.
\end{corollary}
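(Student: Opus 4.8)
The plan is to derive this as an immediate corollary of the $\Theta_6$ bound $\ex_{\mathcal{P}}(n,\Theta_6)\leq \frac{18(n-2)}{7}$ that Y. Lan et al. establish earlier, using the simple observation that every copy of $C_6$ is a subgraph of $\Theta_6$. More precisely, any graph containing a $C_6$ on six vertices $v_1v_2v_3v_4v_5v_6$ trivially contains $C_6$, so a $C_6$-free graph need not be $\Theta_6$-free; the inclusion I actually want runs the other way. The key point is that $\Theta_6$ is obtained from $C_6$ by adding one chord, so $\Theta_6 \supseteq C_6$ as subgraphs; hence \emph{every} $\Theta_6$-free graph is automatically $C_6$-free is \textbf{false}. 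I must instead argue that every $C_6$-free graph is $\Theta_6$-free: since $\Theta_6$ contains $C_6$ as a subgraph, a graph with no $C_6$ can have no $\Theta_6$ either, so the family of $C_6$-free planar graphs is contained in the family of $\Theta_6$-free planar graphs, and therefore the maximum edge count over the former is at most that over the latter.

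First I would state this containment cleanly: if $G$ is $C_6$-free then $G$ is $\Theta_6$-free, because any copy of $\Theta_6$ in $G$ would contain its outer six-cycle, yielding a forbidden $C_6$. Consequently every $n$-vertex $C_6$-free planar graph is counted among the $n$-vertex $\Theta_6$-free planar graphs, giving
\begin{equation*}
\ex_{\mathcal{P}}(n,C_6) \leq \ex_{\mathcal{P}}(n,\Theta_6) \leq \frac{18(n-2)}{7}
\end{equation*}
for all $n \geq 6$, where the second inequality is exactly the bound stated for $\Theta_6$ in the excerpt (valid for $n \geq 7$, and verified directly in the small case $n=6$).

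To finish, I would verify the equality claim at $n=9$ by exhibiting a concrete $9$-vertex $C_6$-free planar graph with $\frac{18(9-2)}{7}=18$ edges. The natural candidate is a planar triangulation-like construction in which every face is a triangle or a quadrilateral and no six-cycle appears; one checks $C_6$-freeness face by face and counts edges via Euler's formula $e = 3n - 6 - (\text{deficiency from non-triangular faces})$, confirming $e=18$. The main (and only real) obstacle here is the construction and verification of this extremal example: one must ensure the graph is genuinely planar, has exactly $18$ edges on $9$ vertices, and contains no $6$-cycle, which requires a careful case check of all potential cycles rather than any deep argument. The corollary itself is otherwise an immediate logical consequence of the already-proved $\Theta_6$ theorem.
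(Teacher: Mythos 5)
Your core deduction is exactly the argument the paper has in mind when it states this corollary: since $C_6$ is a subgraph of $\Theta_6$, any copy of $\Theta_6$ contains a $C_6$, so every $C_6$-free planar graph is $\Theta_6$-free, and therefore $\ex_{\mathcal{P}}(n,C_6)\leq \ex_{\mathcal{P}}(n,\Theta_6)\leq \frac{18(n-2)}{7}$. That monotonicity step is correct and is the same route as the paper (which simply quotes the result of Lan et al.\ as a consequence of their $\Theta_6$ bound), although your first paragraph reaches it only after a string of self-corrections, including a garbled sentence, and should be rewritten as the clean two-line implication you give in your second paragraph.

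The genuine gap is the equality claim at $n=9$, which is part of the statement and which you explicitly leave as ``the main obstacle.'' You never exhibit a $9$-vertex $C_6$-free planar graph with $18$ edges, and the candidate you sketch --- a graph ``in which every face is a triangle or a quadrilateral'' --- points in the wrong direction: if every face of a connected plane graph is bounded by a cycle, the graph is $2$-connected, whereas the extremal example here has a cut vertex. The graph you need is the one-point union of two copies of $K_5^-$: each copy has $5$ vertices, $9$ edges, is planar, and contains no cycle longer than $C_5$; identifying a vertex of one copy with a vertex of the other yields $9$ vertices and $18=\frac{18(9-2)}{7}$ edges, and it is $C_6$-free because every cycle must lie entirely inside a single $5$-vertex block. (Note that its unbounded face is bounded by a closed walk of length $6$, not by a triangle or quadrilateral, so your proposed face structure excludes the known extremal graph; moreover, for $2$-connected candidates with minimum degree at least $3$, Theorem~\ref{thm:main} of this paper caps the edge count at $\frac{5}{2}\cdot 9-7$, i.e.\ at $15$ edges, so your plan would at best require degree-$2$ vertices and an existence proof you have not given.) A second, smaller gap: the $\Theta_6$ bound you cite is stated for $n\geq 7$, and your parenthetical that the case $n=6$ is ``verified directly'' is an assertion, not a verification --- one must actually check that every $6$-vertex planar graph with $11$ or $12$ edges contains a $C_6$.
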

In this paper we present a tight bound for $\ex_{\mathcal{P}}(n, C_6)$. In particular, we prove the following two theorems to give the tight bound.

We denote the vertex and the edge sets of a graph $G$ by $V(G)$ and $E(G)$ respectively. We also denote the number of vertices and edges of $G$ by $v(G)$ and $e(G)$ respectively. The minimum degree of $G$ is denoted $\delta(G)$. The main ingredient of the result is as follows:
\begin{theorem}\label{thm:main}
    Let $G$ be a $2$-connected, $C_6$-free plane graph on $n$ $(n\geq 6)$ vertices with $\delta(G)\geq 3$. Then $e(G) \leq \frac{5}{2} n - 7$.
\end{theorem}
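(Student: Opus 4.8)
The plan is to combine Euler's formula with a discharging argument driven by the face lengths, using $C_6$-freeness to forbid the local configurations that would otherwise create too many triangular faces. First I would set up the face count. Since $G$ is $2$-connected, every face is bounded by a cycle; write $f_i$ for the number of faces of length $i$, so that $\sum_{i\ge 3} f_i = f$ and $\sum_{i\ge 3} i f_i = 2e$. Because $G$ is $C_6$-free it has no $6$-face, so $f_6=0$. Substituting $f=e-n+2$ from Euler's formula gives
\[
\sum_{i\ge 3}(3i-10)\,f_i \;=\; 6e-10f \;=\; 2(5n-2e)-20,
\]
so the desired bound $e\le \tfrac52 n-7$ is \emph{exactly equivalent} to $\sum_{i\ge 3}(3i-10)f_i\ge 8$. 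The coefficient $3i-10$ equals $-1$ for triangles, $+2$ for quadrilaterals, $+5$ for pentagons, and $\ge 11$ for $i\ge 7$; in particular the triangular faces are the only ones contributing negatively, so the whole problem is to show that the triangular faces are paid for by the larger faces, with a global surplus of $8$.

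Next I would prove the local consequences of $C_6$-freeness that control how triangles attach to larger faces. (i) \emph{Every triangular face has a non-triangular neighbour.} If a triangle $abc$ had triangular neighbours across all three edges, then either these introduce three new apices $x,y,z$ and $x\,a\,z\,c\,y\,b$ is a $C_6$, or some apices coincide, which forces $\{a,b,c\}$ together with the apices to be a $K_4$ component, impossible since $G$ is connected with $n\ge 6$. (ii) \emph{A pentagonal face has no triangular neighbour,} since subdividing one boundary edge by the apex of an adjacent triangle turns the bounding $5$-cycle into a $6$-cycle. (iii) \emph{A quadrilateral face has at most two triangular neighbours,} and two only in the shared-apex configuration: triangles on opposite edges, or on adjacent edges with distinct apices, both create a $C_6$. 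More generally I would record, for each $\ell$, an upper bound on the number of triangular neighbours of an $\ell$-face.

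I would then run the discharging. Assign each face $F$ the charge $3\ell(F)-10$ and let every triangle pull $1$ unit of charge from one designated large neighbour, which exists by (i). Each triangle then ends at $0$. A quadrilateral loses at most $2$ by (iii) and ends $\ge 0$; a pentagon loses nothing and retains $+5$; and an $\ell$-face with $\ell\ge 7$ loses at most $\ell\le 3\ell-10$ and ends $\ge 0$. Hence the total charge is at least $0$, which already recovers a bound of the shape $e\le \tfrac52 n-5$.

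The hard part will be squeezing out the last global surplus of $8$, i.e.\ the ``$-7$'' rather than ``$-5$''. The local bounds above are simultaneously tight only for a hypothetical graph in which essentially every non-triangular face is a quadrilateral carrying exactly two triangular neighbours (in the shared-apex gadget) and every triangle has exactly one large neighbour; such a graph would have total charge $0$. I expect this uniform saturation to be globally impossible in a $2$-connected, $C_6$-free plane graph with $\delta(G)\ge 3$, and proving this is where the real work lies: one must refine the discharging (fractional charges, analysis of the second neighbourhood of quadrilaterals and of the shared-apex gadgets, and a separate treatment of the degenerate cases where a triangle's apex lands on a vertex of the incident large face) and extract a constant surplus from the global structure. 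The hypotheses $\delta(G)\ge 3$ and $2$-connectivity are precisely what rule out the light, saturated configurations and force this surplus, so I anticipate a fairly long case analysis organized around the faces of length $4$ and $5$ and their local environments.
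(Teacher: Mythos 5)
Your reduction via Euler's formula is sound (indeed $\sum_{i\ge 3}(3i-10)f_i \ge 8$ is exactly equivalent to $5n-2e\ge 14$), but your key structural claim (i) is false, and this collapses the whole discharging. The dichotomy in your argument --- three distinct apices yield a $C_6$, coinciding apices force a $K_4$ component --- misses the case where coinciding apices produce a $K_4$ or $K_5^-$ that is attached to the rest of $G$ only at its outer vertices. Concretely, embed $K_5^-$ (the block $B_{5,a}$ of Figure~\ref{fig:5blocks}) with outer triangle $v_1v_2v_3$ and $v_4,v_5$ inside: all five bounded faces are triangles, and a central face such as $v_1v_4v_5$ has all three of its neighbouring faces triangular. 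You cannot rule this configuration out, because the extremal graphs of Theorem~\ref{thm:construct} --- which are $2$-connected, $C_6$-free, have $\delta\ge 3$, arbitrarily many vertices, and attain $e=\frac{5}{2}n-7$ --- are built precisely by inserting such $K_5^-$ gadgets. In those graphs every interior triangle of every $K_5^-$ has no non-triangular neighbour, so it keeps its charge of $-1$ with nothing to pull from, and your scheme does not even deliver the weaker bound $e\le\frac{5}{2}n-5$.

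Second, even if all your local claims held, the proposal is explicitly incomplete: you only reach total charge $\ge 0$, and the entire gap between $-5$ and the required $-7$ (a global surplus of $8$) is deferred with ``I expect \dots proving this is where the real work lies.'' The paper closes both gaps with one idea that your face-by-face approach lacks: decompose $G$ into \emph{triangular blocks} (maximal edge-sets glued through $3$-faces), use $C_6$-freeness to show every block has at most $5$ vertices (Lemma~\ref{lem:5block}) and to force every exterior face of a nontrivial block to have length at least $7$, with controlled exceptions of length $4$ (Proposition~\ref{prop:faces}), then split vertices and faces fractionally among blocks so that $7f(B)+2n(B)-5e(B)\le 0$ holds for each block, or for each small group formed by an exceptional $B_{5,d}$ or $B_{4,b}$ together with the trivial blocks on its incident $4$-face (Lemmas~\ref{lem:easyblock}--\ref{lem:partition}). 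Summing over blocks gives $7f+2n-5e\le 0$, which with Euler's formula is exactly $5n-2e\ge 14$. The block decomposition is what handles clustered triangles: a block's long exterior faces pay simultaneously for all of the triangles inside it, which is precisely the step that a triangle-by-triangle transfer to a ``large neighbour'' cannot reproduce.
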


We use Theorem~\ref{thm:main}, which considers only $2$-connected graphs with no degree $2$ (or $1$) vertices and order at least $6$, in order to establish our desired result, which bounds gives the desired bound of $\frac{5}{2}n-7$ for all $C_6$-free plane graphs with at least $18$ vertices.
\begin{theorem}\label{thm:main_new}
    Let $G$ be a $C_6$-free plane graph on $n$ $(n\geq 18)$ vertices. Then
    \begin{align*}
        e(G) \leq \frac{5}{2} n - 7 .
    \end{align*}
\end{theorem}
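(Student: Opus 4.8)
The plan is to deduce Theorem~\ref{thm:main_new} from Theorem~\ref{thm:main} by induction on $n$, peeling the graph down to the $2$-connected, minimum-degree-$\geq 3$ case to which Theorem~\ref{thm:main} applies. First I would reduce to connected graphs: it suffices to bound an edge-maximal $C_6$-free plane graph $G$ on $n$ vertices, and if such a $G$ were disconnected we could join two of its components by a single edge, which keeps the graph planar and, being a bridge, lies on no cycle and hence creates no $C_6$. This would contradict maximality, so we may assume $G$ is connected.

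Next I would set up the induction with two reduction rules. If $\delta(G)\leq 2$, delete a vertex $v$ of degree at most $2$; the graph $G-v$ is $C_6$-free and planar on $n-1$ vertices, and since $e(G)\leq e(G-v)+2$ while the target bound decreases by $\frac52$, we gain slack $\frac12$ and the induction closes. If $\delta(G)\geq 3$ and $G$ is $2$-connected, Theorem~\ref{thm:main} gives $e(G)\leq\frac52 n-7$ directly, as $n\geq 18\geq 6$. The remaining case is $\delta(G)\geq 3$ with a cut vertex. Here I would pick an end-block $B$, that is, a block containing exactly one cut vertex $c$, and split it off by recursing on $G'=G-(V(B)\setminus\{c\})$; writing $n_B=v(B)$ we have $v(G')=n-n_B+1$ and $e(G)=e(B)+e(G')$. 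Applying the inductive bound to $G'$, a short computation shows that it suffices to prove the \emph{weaker} per-block estimate $e(B)\leq\frac52 n_B-\frac52$.

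Establishing $e(B)\leq\frac52 n_B-\frac52$ is where the cases interact. Since $B$ is an end-block and $\delta(G)\geq 3$, every vertex of $B$ other than $c$ keeps all its neighbors inside $B$, so $\deg_B(u)\geq 3$ for $u\neq c$; thus $B$ is $2$-connected with $\delta(B)\geq 3$ unless $\deg_B(c)=2$. When $\delta(B)\geq 3$ and $n_B\geq 6$, Theorem~\ref{thm:main} gives even $e(B)\leq\frac52 n_B-7$; when $n_B\leq 7$ the crude planar bound $e(B)\leq 3n_B-6$ already yields $\frac52 n_B-\frac52$; and when $\deg_B(c)=2$ with $n_B\geq 8$, deleting $c$ replaces $B$ by a smaller $C_6$-free plane graph to which the same dichotomy applies. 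In every case the subadditive arithmetic of the $\frac52 n-7$ bound leaves room.

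The main obstacle I anticipate is precisely the boundary bookkeeping near $n=18$. The bound $\frac52 n-7$ is \emph{false} for small $n$ — a planar triangulation on five vertices already has $9>\frac52\cdot 5-7$ edges — so the induction cannot simply bottom out at an arbitrary small base case, and a pared-off block or a graph reached after a deletion may drop below $18$ vertices. Handling this requires confirming a finite list of sharper estimates for $C_6$-free plane graphs on roughly $8\leq m\leq 17$ vertices (of the form $e\leq\frac52 m-\frac52$ and the like), and it is exactly the need for these small-order bounds to hold that forces the threshold $n\geq 18$. I expect this finite verification, rather than the clean reductions, to be the technical heart of the argument.
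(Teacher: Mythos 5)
Your reductions are sound only while every graph appearing in the recursion still has at least $18$ vertices, and the argument has a genuine gap exactly in the hard case: graphs all of whose blocks are small (the near-extremal configurations, e.g.\ trees of $K_5^-$'s). Two things go wrong there. First, summing your per-block estimate $e(B_i)\leq\frac{5}{2}n_i-\frac{5}{2}$ over all $k$ blocks of a connected graph gives only $e(G)\leq\frac{5}{2}\bigl(n+k-1\bigr)-\frac{5}{2}k=\frac{5}{2}n-\frac{5}{2}$, not $\frac{5}{2}n-7$; the missing $\frac{9}{2}$ can only come from applying the inductive bound $\frac{5}{2}v(G')-7$ to the residual graph $G'$. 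Second, in precisely this case the recursion must drop below $18$ vertices (each end-block removal deletes $n_B-1\leq 4$ vertices, so starting from $n\geq 18$ you land in the window $14\leq v(G')\leq 17$), where that inductive bound is false --- Figure~\ref{fig:smallexample} has $17$ vertices and $36>\frac{5}{2}(17)-7$ edges. So the whole proof rests on your ``finite list of sharper estimates,'' and there the proposal is quantitatively wrong: bounds ``of the form $e\leq\frac{5}{2}m-\frac{5}{2}$'' do not close the induction. Concretely, if the removed end-block is a $K_5^-$ ($n_B=5$, $e(B)\leq 9$), then $n=m+4$ and you need $e(G')\leq\frac{5}{2}n-7-9=\frac{5}{2}m-6$ for the remaining $m$-vertex graph; for $m=17$ this is the sharp statement $\ex_{\mathcal{P}}(17,C_6)\leq 36$, for $m=14$ it is $e\leq 29$, and so on. These are not finite verifications: each is an extremal claim about all $C_6$-free plane graphs of that order, and proving them requires essentially the full machinery of the theorem itself. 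Your proposal supplies no method for them, and the same issue already appears in your degree-$2$ deletion rule at $n=18$, which needs $e(G-v)\leq 36$ on $17$ vertices.

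The paper escapes this trap by not inducting at all. It tracks the potential $5v-2e$: peeling vertices of degree at most $2$ gives $5v(G)-2e(G)\geq 5v(G')-2e(G')+\bigl(v(G)-v(G')\bigr)$, so every peeled vertex is pure profit; the residual min-degree-$3$ graph $G'$ is decomposed into blocks, with Theorem~\ref{thm:main} applied to blocks of order at least $6$ and explicit planar bounds to the small ones (contributing $5n_i-2e_i-5\geq 2,3,4,3$ for orders $5,4,3,2$). The step you are missing is the last one: when no block has order at least $6$, the paper uses the identity $v(G')=4b_5+3b_4+2b_3+b_2+1$ to trade the block counts against $v(G')$ and obtains $5v(G)-2e(G)\geq\frac{1}{2}v(G)+\frac{9}{2}>13$, hence $\geq 14$ by integrality. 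In other words, the missing $\frac{9}{2}$ is gained globally, from the sheer number of small blocks and peeled vertices, with the hypothesis $n\geq 18$ invoked exactly once at the end. A local end-block induction has no mechanism to realize this gain, which is why it is forced into the small-order cases it cannot resolve.
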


Indeed, there are $17$-vertex graphs on $17$ vertices with $36$ edges, but $\frac{5}{2}(17)-7=35.5<36$. One such graph can be seen in Figure~\ref{fig:smallexample}.

\begin{figure}[ht]
    \centering
    \begin{tikzpicture}[scale=0.2]
        \coordinate (w1) at (-12,0);
        \coordinate (w2) at (-8,8);
        \coordinate (w3) at (-4,0);
        \coordinate (w4) at (-8,2);
        \coordinate (w5) at (-8,5);
        \coordinate (x1) at (-4,0);
        \coordinate (x2) at (0,8);
        \coordinate (x3) at (4,0);
        \coordinate (x4) at (0,2);
        \coordinate (x5) at (0,5);
        \coordinate (y1) at (4,0);
        \coordinate (y2) at (8,8);
        \coordinate (y3) at (12,0);
        \coordinate (y4) at (8,2);
        \coordinate (y5) at (8,5);
        \coordinate (z1) at (12,0);
        \coordinate (z2) at (16,8);
        \coordinate (z3) at (20,0);
        \coordinate (z4) at (16,2);
        \coordinate (z5) at (16,5);
        \draw[very thick] (w1) -- (w2) -- (w3) -- (w1);
        \draw[very thick] (w1) -- (w4) -- (w3);
        \draw[very thick] (w1) -- (w5) -- (w3);
        \draw[very thick] (w2) -- (w5) -- (w4);
        \draw[very thick] (x1) -- (x2) -- (x3) -- (x1);
        \draw[very thick] (x1) -- (x4) -- (x3);
        \draw[very thick] (x1) -- (x5) -- (x3);
        \draw[very thick] (x2) -- (x5) -- (x4);
        \draw[very thick] (y1) -- (y2) -- (y3) -- (y1);
        \draw[very thick] (y1) -- (y4) -- (y3);
        \draw[very thick] (y1) -- (y5) -- (y3);
        \draw[very thick] (y2) -- (y5) -- (y4);
        \draw[very thick] (z1) -- (z2) -- (z3) -- (z1);
        \draw[very thick] (z1) -- (z4) -- (z3);
        \draw[very thick] (z1) -- (z5) -- (z3);
        \draw[very thick] (z2) -- (z5) -- (z4);
        \draw[fill=black] (w1) circle(\circ);
        \draw[fill=black] (w2) circle(\circ);
        \draw[fill=black] (w3) circle(\circ);
        \draw[fill=black] (w4) circle(\circ);
        \draw[fill=black] (w5) circle(\circ);
        \draw[fill=black] (x1) circle(\circ);
        \draw[fill=black] (x2) circle(\circ);
        \draw[fill=black] (x3) circle(\circ);
        \draw[fill=black] (x4) circle(\circ);
        \draw[fill=black] (x5) circle(\circ);
        \draw[fill=black] (y1) circle(\circ);
        \draw[fill=black] (y2) circle(\circ);
        \draw[fill=black] (y3) circle(\circ);
        \draw[fill=black] (y4) circle(\circ);
        \draw[fill=black] (y5) circle(\circ);
        \draw[fill=black] (z1) circle(\circ);
        \draw[fill=black] (z2) circle(\circ);
        \draw[fill=black] (z3) circle(\circ);
        \draw[fill=black] (z4) circle(\circ);
        \draw[fill=black] (z5) circle(\circ);
    \end{tikzpicture}
    \caption{Example of $G$ on $17$ vertices such that $e(G) > (5/2) v(G) - 7$.}
    \label{fig:smallexample}
\end{figure}
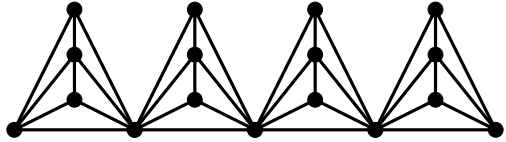

We show that, for large graphs, Theorem~\ref{thm:main_new} is tight:
\begin{theorem}\label{thm:construct}
    For every $n\cong 2\pmod{5}$, there exists a $C_6$-free plane graph $G$ with $v(G) = \frac{18n+14}{5}$ and $e(G) = 9n$, hence $e(G) = \frac{5}{2} v(G) - 7$.
\end{theorem}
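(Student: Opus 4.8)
The plan is to exhibit an explicit infinite family of plane graphs meeting the bound of Theorem~\ref{thm:main_new} with equality, so there are really three tasks: (i) describe the graph $G=G_n$ concretely together with a plane embedding, (ii) verify that $G_n$ is $C_6$-free, and (iii) count $v(G_n)$ and $e(G_n)$. The arithmetic identity should be disposed of first, since it is automatic: if one builds $G_n$ so that $v(G_n)=\tfrac{18n+14}{5}$ and $e(G_n)=9n$, then $\tfrac52 v(G_n)-7=\tfrac{18n+14}{2}-7=(9n+7)-7=9n=e(G_n)$, so the final clause of the statement costs nothing and the whole problem reduces to producing a $C_6$-free plane graph with exactly these two parameters. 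Writing $n=5k+2$, the targets become $v=18k+10$ and $e=45k+18$, which grow by $18$ and $45$ as $k$ increases by $1$; this strongly suggests a periodic construction assembled from $n$ congruent blocks glued around a common frame.

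First I would fix a small $C_6$-free planar block $B$ of high edge density --- a triangulated piece in the spirit of the $5$-vertex gadget drawn in Figure~\ref{fig:smallexample}, which is a maximal planar graph on few vertices and is $C_6$-free simply because it is too small to contain a $6$-cycle --- and designate a small set of attachment vertices on its outer boundary. The graph $G_n$ is then obtained by arranging $n$ copies of $B$ in a planar pattern (for instance wrapped into an annulus) and identifying attachment vertices of consecutive copies according to a fixed rule; the embedding is inherited block by block, so planarity of $G_n$ needs only the observation that each block is drawn in its own region of the pattern with attachment vertices on the shared boundaries. The identification rule is precisely the degree of freedom I would tune so that the per-block contribution to $(v,e)$ averages to $(\tfrac{18}{5},9)$; concretely, the counts force exactly $\tfrac{7(n-2)}{5}=7k$ vertex identifications, and making these identifications simultaneously planar, consistent, and free of multiple edges is the bookkeeping core of step (iii).

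The main obstacle is step (ii): guaranteeing that no $6$-cycle appears despite pushing the edge density all the way up to the extremal value $\tfrac52 v-7$. I would prove $C_6$-freeness by classifying any putative $6$-cycle $C$ by how it meets the blocks. A cycle contained in a single block is impossible, since each block is $C_6$-free by direct inspection. A cycle meeting several blocks can enter and leave each block only through its attachment vertices, so it decomposes into internally disjoint attachment-to-attachment paths, one per visited block, and the length of $C$ is the sum of the lengths of these subpaths. The key quantitative lemma is therefore a lower bound on attachment-to-attachment path lengths inside $B$ together with a lower bound on the number of blocks a separating cycle must traverse --- equivalently, a girth condition on the \emph{skeleton} whose vertices are the shared attachment vertices and whose edges record the blocks. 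Once one shows that every cross-block cycle is forced to use at least seven edges while intra-block cycles never reach length six, $C_6$-freeness follows. The delicate point --- and the reason the attachment rule must be chosen carefully rather than naively --- is that the most economical gluings create short skeleton cycles which, when "decorated" by the short internal paths of the blocks, would close up into exactly a $6$-cycle; excluding this for the specific extremal-density pattern is where the real difficulty lies.

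Finally I would finish step (iii) as a direct count: summing $9$ edges per block (blocks share vertices but no edges) gives $e(G_n)=9n$, while summing the block vertex counts and subtracting the $7k$ identifications yields $v(G_n)=\tfrac{18n+14}{5}$. As an internal consistency check, for a connected member of the family Euler's formula gives $f=\tfrac32 v-5$ faces with average face length tending to $10/3$, which is compatible with a near-triangulation carrying a controlled number of larger faces. Substituting the two counts into the identity from the first paragraph then gives $e(G_n)=\tfrac52 v(G_n)-7$, completing the proof and showing that the bound of Theorem~\ref{thm:main_new} is tight for infinitely many $n$.
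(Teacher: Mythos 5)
Your proposal assembles the right ingredients---the arithmetic reduction, $K_5^-$ as the repeated $9$-edge block, the count of $\frac{7(n-2)}{5}=7k$ vertex identifications, and the two-pronged $C_6$-freeness criterion (intra-block cycles have length at most $5$ since the block has only $5$ vertices; cross-block cycles are controlled by a girth condition on the skeleton of attachment vertices)---and this is indeed the mechanism behind the paper's construction. But the proposal stops exactly where the theorem's content begins: you never exhibit the skeleton or the attachment rule, and you explicitly concede that choosing the gluing so that no short skeleton cycle, decorated by short internal paths, closes into a $6$-cycle ``is where the real difficulty lies.'' The existence of such a pattern \emph{is} the theorem; a plan that defers it is not a proof. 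Moreover, your one concrete suggestion (wrapping the blocks into an annulus) cannot meet the counts: a cyclic arrangement yields only $n$ identifications, while your own computation demands $\frac{7(n-2)}{5}$, so the skeleton must have average degree $\frac{14(n-2)}{5n}\approx 2.8$, i.e.\ it must be a \emph{planar} graph with exactly $\frac{n+28}{5}$ vertices of degree $2$ and $\frac{4n-28}{5}$ vertices of degree $3$, and simultaneously girth at least $7$; producing such a graph for infinitely many $n$ is the nontrivial step you leave open.

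The paper supplies precisely this missing object: a plane graph $G_0$ on $n$ vertices all of whose faces have length $7$ and all of whose degrees are $2$ or $3$, realized explicitly as a heptagonal grid with three layers plus one extra vertex (Figure~\ref{fig:heptgrid}) when $n\equiv 7\pmod{10}$, and as a small modification $H_0^k$ (Remark~\ref{rem:H0k}) when $n\equiv 2\pmod{10}$. The inflation is then concrete: subdivide every edge of $G_0$ and join halving vertices at distance $2$ (Figure~\ref{fig:op1}), then replace the triangle around each degree-$2$ vertex, resp.\ the $K_4$ around each degree-$3$ vertex, by a $K_5^-$ (Figures~\ref{fig:op2} and~\ref{fig:op3}). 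Each vertex of $G_0$ becomes a $9$-edge block and each edge of $G_0$ becomes a junction vertex shared by exactly two blocks, giving $e(G)=9n$ and $v(G)=\frac{18n+14}{5}$; and $C_6$-freeness follows along the very lines you sketched, because two blocks share at most one junction vertex, so a cycle meeting several blocks induces a non-backtracking closed walk in $G_0$, hence visits at least as many blocks as the girth of $G_0$ (which is $7$), each contributing at least one edge. To repair your argument you would need to construct something equivalent to this $G_0$; as written, the proof has a hole at its load-bearing point.
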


For a vertex $v$ in $G$, the neighborhood of $v$, denoted $N_G(v)$, is the set of all vertices in $G$ which are adjacent to $v$. We denote the degree of $v$ by $d_G(v) = |N_G(v)|$. We may avoid the subscripts if the underlying graph is clear. The minimum degree of $G$ is denoted by $\delta(G)$, the number of components of $G$ is denoted by $c(G)$. For the sake of simplicity, we may use the term $k$-cycle to mean a cycle of length $k$ and $k$-face to mean a face bounded by a $k$-cycle. A $k$-path is a path with $k$ edges.

\section{Proof of Theorem~\ref{thm:construct}: Extremal Graph Construction}

First we show that for a plane graph $G_0$ with $n$ vertices ($n\cong 7\pmod{10}$), each face having length $7$ and each vertex in $G_0$ having degree either $2$ or $3$, we can construct $G$, where $G$ is a $C_6$-free plane graph with $v(G) = \frac{18n+14}{5}$ and $e(G) = 9n$. We then give a construction for such a $G_0$ as long as $n\cong 7\pmod{10}$.

Using Euler's formula, the fact that every face has length $7$ and every degree is $2$ or $3$, we have $e(G_0)=\frac{7(n-2)}{5}$ and the number of degree $2$ and degree $3$ vertices in $G_0$ are $\frac{n+28}{5}$ and  $\frac{4n-28}{5}$, respectively.

Given $G_0$, we construct first an intermediate graph $G'$ by step \ref{it:construct1}:
\begin{enumerate}[label=(\arabic*)]
    \item Add halving vertices to each edge of $G_0$ and join the pair of halving vertices with distance $2$, see an example in Figure~\ref{fig:op1}. Let $G'$ denote this new graph, then $v(G')=v(G_0)+e(G_0)=\frac{12n-14}{5}$ and the number of degree $2$ and degree $3$ vertices in $G'$ is equal to the number of degree $2$ and degree $3$ vertices in $G_0$, respectively. \label{it:construct1}

    \begin{figure}[ht]
        \def \radius {200pt}
        \centering
        \begin{tikzpicture}[scale=0.2]
            \def \n {7}
            \foreach \s in {1,...,\n}
            {
                \def \here {{360/\n * (\s - 1) + 90}}
                \def \next {{360/\n * (\s) + 90}}
                \draw[fill=black] (\here : \radius) circle(\circ);
                \draw[very thick] (\here : \radius) -- (\next : \radius);
            }
        \end{tikzpicture}
        \begin{tikzpicture}[scale=0.2]
            \node at (0,0){$\Longrightarrow$};
            \node at (-4,-6) {};
            \node at (4,0) {};
        \end{tikzpicture}
        \begin{tikzpicture}[scale=0.2]
            \def \n {7}
            \def \inradius {180pt}
           \foreach \s in {1,...,\n}
           {
                \def \here {{360/\n * (\s - 1) + 90}}
                \def \next {{360/\n * (\s) + 90}}
                \def \half {{360/\n * (\s - 1/2) + 90}}
                \def \nexthalf {{360/\n * (\s + 1/2) + 90}}
                \draw[fill=black] (\here : \radius) circle(\circ);
                \draw[very thick] (\here : \radius) -- (\next : \radius);
                \draw[red,fill=red] (\half : \inradius) circle(\circ);
                \draw[red,very thick] (\half : \inradius) -- (\nexthalf : \inradius);
            }
        \end{tikzpicture}
        \caption{Adding a halving vertex to each edge of $G_0$.}
        \label{fig:op1}
    \end{figure}
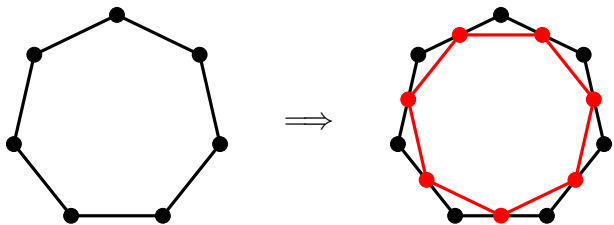

To get $G$, we apply the following steps \ref{it:construct2} and \ref{it:construct3} on the degree $2$ and $3$ vertices in $G'$, respectively.

    \item For each degree $2$ vertex $v$ in $G_0$, let $N(v)=\{v_1,v_2\}$, and so $v_1vv_2$ forms an induced triangle in $G'$. Fix $v_1$ and $v_2$, replace $v_1vv_2$ with a $K^{-}_5$ by adding vertices $v^{'}_1$, $v^{'}_2$ to $V(G')$ and edges $v^{'}_1v$, $v^{'}_1v^{'}_2$, $v^{'}_1v_1$, $v^{'}_1v_2$, $v^{'}_2v_1$, $v^{'}_2v_2$ to $E(G')$. See Figure~\ref{fig:op2}. \label{it:construct2}
    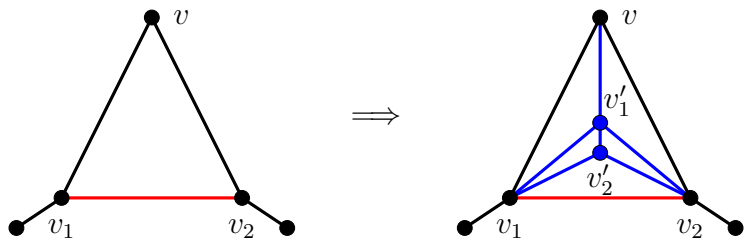
\begin{figure}[ht]
        \centering
        \begin{tikzpicture}[scale=0.2]
            \coordinate (v) at (0,12);
            \coordinate (v1) at (-6,0);
            \coordinate (v2) at (6,0);
            \coordinate (leafl) at (-9,-2);
            \coordinate (leafr) at (9,-2);
            \coordinate (leaft) at (0,15);
            \draw[red,very thick] (v1) -- (v2);
            \draw[very thick] (leafl) -- (v1) -- (v) -- (v2) -- (leafr);
            \draw[fill=black] (v1) circle(\circ)  node[label=below:$v_1$] {};
            \draw[fill=black] (v2) circle(\circ)  node[label=below:$v_2$] {};
            \draw[fill=black] (v) circle(\circ)  node[label=right:$v$] {};
            \draw[fill=black] (leafl) circle(\circ);
            \draw[fill=black] (leafr) circle(\circ);
            \node at (leaft) {}; 
        \end{tikzpicture}
        \begin{tikzpicture}[scale=0.2]
            \node at (0,0){$\Longrightarrow$};
            \node at (-4,-8) {};
            \node at (4,0) {};
        \end{tikzpicture}
        \begin{tikzpicture}[scale=0.2]
            \coordinate (v) at (0,12);
            \coordinate (v1) at (-6,0);
            \coordinate (v2) at (6,0);
            \coordinate (v1p) at (0,5);
            \coordinate (v2p) at (0,3);
            \coordinate (leafl) at (-9,-2);
            \coordinate (leafr) at (9,-2);
            \coordinate (leaft) at (0,15);
            \draw[red,very thick] (v1) -- (v2);
            \draw[blue,very thick] (v1) -- (v2p) -- (v2);
            \draw[blue,very thick] (v1) -- (v1p) -- (v2);
            \draw[blue,very thick] (v2p) -- (v1p) -- (v);
            \draw[very thick] (leafl) -- (v1) -- (v) -- (v2) -- (leafr);
            \draw[fill=black] (v1) circle(\circ)  node[label=below:$v_1$] {};
            \draw[fill=black] (v2) circle(\circ)  node[label=below:$v_2$] {};
            \draw[fill=black] (v) circle(\circ)  node[label=right:$v$] {};
            \draw[fill=black] (leafl) circle(\circ);
            \draw[fill=black] (leafr) circle(\circ);
            \draw[thin,black,fill=blue] (v1p) circle(\circ)  node[label={[label distance=-8pt]30:$v_1'$}] {};
            \draw[thin,black,fill=blue] (v2p) circle(\circ)  node[label={[label distance=-4pt]270:$v_2'$}] {};
            \node at (leaft) {}; 
        \end{tikzpicture}
        \caption{Replacing a degree-$2$ vertex of $G_0$ with a $K_5^-$.}
        \label{fig:op2}
    \end{figure}

    \item For each degree $3$ vertex $v$ in $G_0$, such that $N(v)=\{v_1,v_2,v_3\}$, the set of vertices $\{v,v_1,v_2,v_3\}$ then forms an induced $K_4$ in $G'$. Fix $v_1$, $v_2$ and $v_3$, replace this $K_4$ with a $K^{-}_5$ by adding a new vertex $v'$ to $V(G')$ and edges $v'v$, $v'v_1$, $v'v_2$ to $E(G')$. See Figure~\ref{fig:op3}.\label{it:construct3}
    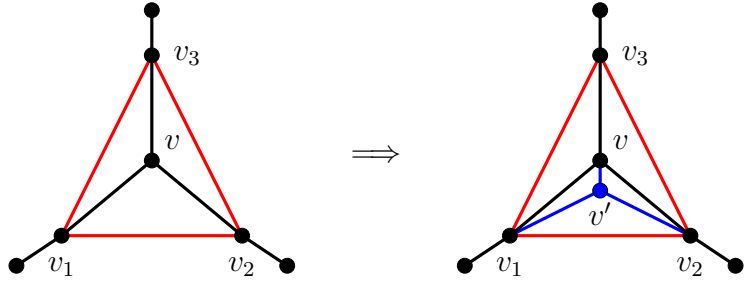
\begin{figure}[ht]
        \centering
        \begin{tikzpicture}[scale=0.2]
            \coordinate (v1) at (-6,0);
            \coordinate (v2) at (6,0);
            \coordinate (v3) at (0,12);
            \coordinate (v) at (0,5);
            \coordinate (leafl) at (-9,-2);
            \coordinate (leafr) at (9,-2);
            \coordinate (leaft) at (0,15);
            \draw[red,very thick] (v1) -- (v2) -- (v3) -- (v1);
            \draw[very thick] (leafl) -- (v1) -- (v) -- (v2) -- (leafr);
            \draw[very thick] (leaft) -- (v3) -- (v);
            \draw[fill=black] (v1) circle(\circ)  node[label=below:$v_1$] {};
            \draw[fill=black] (v2) circle(\circ)  node[label=below:$v_2$] {};
            \draw[fill=black] (v3) circle(\circ)  node[label=right:$v_3$] {};
            \draw[fill=black] (v) circle(\circ)  node[label={[label distance=-4pt]30:$v$}] {};
            \draw[fill=black] (leafl) circle(\circ);
            \draw[fill=black] (leafr) circle(\circ);
            \draw[fill=black] (leaft) circle(\circ);
        \end{tikzpicture}
        \begin{tikzpicture}[scale=0.2]
            \node at (0,0){$\Longrightarrow$};
            \node at (-4,-8) {};
            \node at (4,0) {};
        \end{tikzpicture}
        \begin{tikzpicture}[scale=0.2]
            \coordinate (v1) at (-6,0);
            \coordinate (v2) at (6,0);
            \coordinate (v3) at (0,12);
            \coordinate (v) at (0,5);
            \coordinate (vp) at (0,3);
            \coordinate (leafl) at (-9,-2);
            \coordinate (leafr) at (9,-2);
            \coordinate (leaft) at (0,15);
            \draw[red,very thick] (v1) -- (v2) -- (v3) -- (v1);
            \draw[blue,very thick] (v1) -- (vp) -- (v2);
            \draw[blue,very thick] (vp) -- (v);
            \draw[very thick] (leafl) -- (v1) -- (v) -- (v2) -- (leafr);
            \draw[very thick] (leaft) -- (v3) -- (v);
            \draw[fill=black] (v1) circle(\circ)  node[label=below:$v_1$] {};
            \draw[fill=black] (v2) circle(\circ)  node[label=below:$v_2$] {};
            \draw[fill=black] (v3) circle(\circ)  node[label=right:$v_3$] {};
            \draw[fill=black] (v) circle(\circ)  node[label={[label distance=-4pt]30:$v$}] {};
            \draw[fill=black] (leafl) circle(\circ);
            \draw[fill=black] (leafr) circle(\circ);
            \draw[fill=black] (leaft) circle(\circ);
            \draw[thin,black,fill=blue] (vp) circle(\circ)  node[label={[label distance=-4pt]270:$v'$}] {};
        \end{tikzpicture}
        \caption{Replacing a degree-$3$ vertex of $G_0$ with a $K_5^-$.}
        \label{fig:op3}
    \end{figure}
\end{enumerate}

For each integer $k\geq 0$, and $n = 10 k + 7$ we present a construction for such a $G_0$, call it $G_0^k$:
Let $v_{i}^{t}$ and $v_{i}^{b}$ $(1\leq i\leq k+1)$ be the top and bottom vertices of the heptagonal grids with $3$ layers and $k$ columns, respectively (see the red vertices in Figure~\ref{fig:heptgrid}) and $v$ be the extra vertex in $G_0^k$ but not in the heptagonal grid. We join $v_1^tv$, $vv_1^b$ and $v_{i}^{t}v_{i}^{b}$ $(2\leq i\leq k+1)$. Clearly, $G_0^k$ is a $(10k+7)$-vertex plane graph and each face of $G_0^k$ is a $7$-face. Obviously $e\left(G_0^k\right)=14k+7$, and the number of degree $2$ and $3$ vertices are $2 k + 7 = \frac{n + 28}{5}$ and $8 k = \frac{4 n - 28}{5}$ respectively.

\begin{figure}[ht]
    \centering
    \begin{tikzpicture}[xscale=0.66,yscale=0.55]
        \clip (-10.5,-8.5) rectangle (12.5, 8); 
        \def \myellipse{ellipse (5.0pt and 6.0pt)} 
        \draw[fill=black] (-5,5) \myellipse;
        \foreach \s in {-4,-2,0,2,4} \draw[fill=black] (\s,4) \myellipse;
        \foreach \s in {-2,0,2,4,6} \draw[fill=black] (\s,2) \myellipse;
        \foreach \s in {1.5,3.5,5.5} \draw[fill=black] (\s,1.5) \myellipse;
        \foreach \s in {-3,-1,1,3,5,7} \draw[fill=black] (\s,1) \myellipse;
        \foreach \s in {-3,-1,1,3,5,7} \draw[fill=black] (\s,-1) \myellipse;
        \foreach \s in {-4,-2,0,2,4,6,8} \draw[fill=black] (\s,-2) \myellipse;
        \foreach \s in {-2,0,2,4,6,8} \draw[fill=black] (\s,-4) \myellipse;
        \draw[fill=black] (7,-5) \myellipse;
        \foreach \s in {-2.5,-0.5,3.5,5.5} \draw[fill=black] (\s,-4.5) \myellipse;
        \foreach \s in {-1,1}
        {
            \foreach \t in {-5,-3,1,3,5}
            {
                \draw[very thick] ({\t-\s},{3*\s+1}) -- ({\t-\s+1},{3*\s+2}) -- ({\t-\s+2},{3*\s+1});
                \draw[very thick] ({\t-\s},{3*\s-1}) -- ({\t-\s+1},{3*\s-2}) -- ({\t-\s+2},{3*\s-1});
                \draw[very thick] ({\t-\s},{3*\s+1}) -- ({\t-\s},{3*\s-1});
                \draw[very thick] (\t,1) -- (\t,-1);
            }
            \draw[very thick] (-\s-1,3*\s+1) -- (-\s-1,3*\s-1);
            \draw[very thick] (-\s+7,3*\s+1) -- (-\s+7,3*\s-1);
            \draw[very thick] (-1,1) -- (-1,-1);
            \draw[very thick] (7,1) -- (7,-1);
        }
        \draw[very thick] (-2,2) -- (-1,1);
        \draw[very thick] (6,2) -- (7,1);
        \draw[very thick] (-5,-1) -- (-4,-2);
        \draw[very thick] (1,-1) -- (2,-2);
        \draw[very thick] (-1,3) node {$\cdots$};
        \draw[very thick] (0,0) node {$\cdots$};
        \draw[very thick] (1,-3) node {$\cdots$};
        \coordinate (v1t) at (-6,4);
        \coordinate (v2t) at (-3,5);
        \coordinate (vkm2t) at (1,5);
        \coordinate (vkm1t) at (3,5);
        \coordinate (vkt) at (5,5);
        \coordinate (vkp1t) at (6,4);
        \coordinate (v1b) at (-4,-4);
        \coordinate (v2b) at (-3,-5);
        \coordinate (v3b) at (-1,-5);
        \coordinate (vkm1b) at (3,-5);
        \coordinate (vkb) at (5,-5);
        \coordinate (vkp1b) at (7.5,-4.5);
        \coordinate (v) at (-8.1,-0.9);
        \coordinate (x1) at (-6,2);
        \coordinate (x2) at (-5,1);
        \coordinate (x3) at (-5,-1);
        \coordinate (x4) at (-4.5,1.5);
        \coordinate (x5) at (-4,2);
        \coordinate (y) at (-2.5,1.5);
        \draw[very thick] (v1t) to[out=180,in=-165,distance=125pt] (v1b);
        \draw[very thick] (v2t) to[out=145,in=-165,distance=300pt] (v2b);
        \draw[very thick] (vkp1t) to[out=35,in=0,distance=135pt] (vkp1b);
        \draw[very thick] (vkt) to[out=25,in=-35,distance=275pt] (vkb);
        \draw[very thick] (vkm1t) to[out=40,in=-45,distance=475pt] (vkm1b);
        \draw[very thick,fill=black] (v) \myellipse node[label=left:$v$] {};
        \draw[thin,fill=red] (v1t) \myellipse node[label={[label distance=-3pt]90:$v_1^t$}] {};
        \draw[thin,fill=red] (v2t) \myellipse node[label={[label distance=-3pt]90:$v_2^t$}] {};
        \draw[thin,fill=red] (vkm2t) \myellipse node[label={[label distance=-3pt]90:$v_{k-2}^t$}] {};
        \draw[thin,fill=red] (vkm1t) \myellipse node[label={[label distance=-3pt]90:$v_{k-1}^t$}] {};
        \draw[thin,fill=red] (vkt) \myellipse node[label={[label distance=-3pt]90:$v_k^t$}] {};
        \draw[thin,fill=red] (vkp1t) \myellipse node[label={[label distance=-5pt]300:$v_{k+1}^t$}] {};
        \draw[thin,fill=red] (v1b) \myellipse node[label={[label distance=-5pt]120:$v_1^b$}] {};
        \draw[thin,fill=red] (v2b) \myellipse node[label={[label distance=-2pt]270:$v_2^b$}] {};
        \draw[thin,fill=red] (v3b) \myellipse node[label={[label distance=-2pt]270:$v_3^b$}] {};
        \draw[thin,fill=red] (vkm1b) \myellipse node[label={[label distance=-2pt]270:$v_{k-1}^b$}] {};
        \draw[thin,fill=red] (vkb) \myellipse node[label={[label distance=-3pt]270:$v_k^b$}] {};
        \draw[thin,fill=red] (vkp1b) \myellipse node[label={[label distance=-6pt]275:$v_{k+1}^b$}] {};
        \draw[thin,fill=green] (x1) \myellipse node[label={[label distance=-6pt]195:$x_1$}] {};
        \draw[thin,fill=green] (x5) \myellipse node[label={[label distance=-6pt]15:$x_5$}] {};
        \draw[thin,fill=green] (x4) \myellipse node[label={[label distance=-6pt]315:$x_4$}] {};
        \draw[thin,fill=blue] (y) \myellipse node[label={[label distance=-6pt]315:$y$}] {};
        \draw[thin,fill=green] (x2) \myellipse node[label={[label distance=-6pt]195:$x_2$}] {};
        \draw[thin,fill=green] (x3) \myellipse node[label={[label distance=-6pt]195:$x_3$}] {};
    \end{tikzpicture}
    \caption{The graph $G_0^k$, $k\geq 1$, in which each face has length $7$. The graph $H_0^k$ (see Remark~\ref{rem:H0k}) is obtained by deleting $x_1,\ldots,x_5$ and adding the edge $v_1^ty$.}
    \label{fig:heptgrid}
\end{figure}
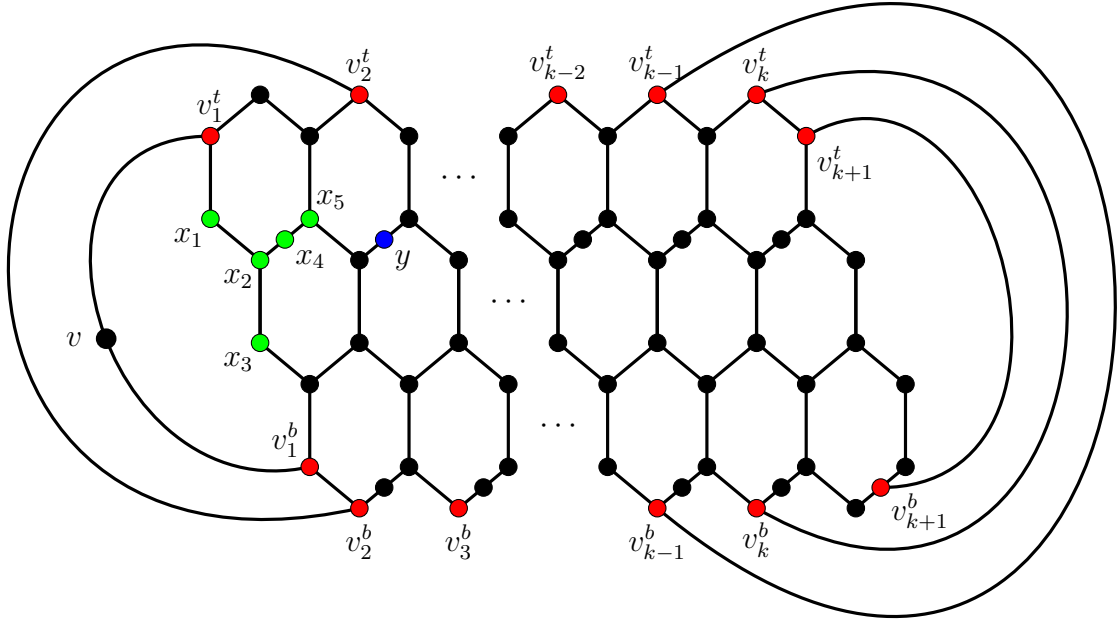

After applying steps \ref{it:construct1}, \ref{it:construct2}, and \ref{it:construct3} on $G_0^k$, we get $G$.
It is easy to verify that $G$ is a $C_6$-free plane graph with
\begin{align*}
    v(G) &= v(G_0^k) + e(G_0^k) + 2 (2 k + 7) + 8 k = (10 k + 7) + (14 k + 7) + 12 k + 14 &&= 36 k + 28 \\
    e(G) &= 9 v(G_0^k)= 90k+63 .
\end{align*}
Thus, $e(G)=\frac{5}{2}v(G)-7$.

\begin{remark}\label{rem:H0k}
    In fact, for $k\geq 1$ and $n=10k+2$, there exists a graph $H^k_0$ which is obtained from $G^k_0$ by deleting vertices (colored green in Figure~\ref{fig:heptgrid}) $x_1$, $x_2$, $x_3$, $x_4$, $x_5$ and adding the edge $v^t_1y$. Clearly, $H^k_0$ is an $10k+2$-vertex plane graph such that all faces have length $7$. Moreover, $e(H^k_0) = 14 k$, the number of degree-$2$ and degree-$3$ vertices are $2 k + 6 = \frac{n + 28}{5}$ and $8 k - 4 = \frac{4 n - 28}{5}$, respectively. After applying steps (1), (2), and (3) to $H^k_0$, we get a graph $H$ that is a $C_6$-free plane graph with $e(H) = (5/2) v(H) - 7$.

    Thus, for any $k\cong 2\pmod{5}$, we have the graphs above such that each face is a 7-gon and we get a $C_6$-free plane graph on $n$ vertices with $(5/2)n - 7$ edges for  $n\cong 10\pmod{18}$ if $n\geq 28$.
\end{remark}

\section{Definitions and Preliminaries}

We give some necessary definitions and preliminary results which are needed in the proof of Theorems~\ref{thm:main} and~\ref{thm:main_new}.

\begin{definition}
    Let $G$ be a plane graph and $e\in E(G)$.  If $e$ is not in a $3$-face of $G$, then we call it a \textbf{trivial triangular-block}.  Otherwise, we recursively construct a \textbf{triangular-block} in the following way.  Start with $H$ as a subgraph of $G$, such that $E(H)=\{e\}$.
    \begin{enumerate}[label=(\arabic*)]
        \item Add the other edges of the $3$-face containing $e$ to $E(H)$.
        \item Take $e'\in E(H)$ and search for a $3$-face containing $e'$. Add these other edge(s) in this $3$-face to $E(H)$.
        \item Repeat step (2) till we cannot find a $3$-face for any edge in $E(H)$.
    \end{enumerate}
    We denote the triangular-block obtained from $e$ as the starting edge, by $B(e)$.
\end{definition}

Let $G$ be a plane graph. We have the following three observations:
\begin{enumerate}[label=(\roman*)]
    \item If $H$ is a non-trivial triangular-block and $e_1, e_2\in E(H)$, then $B(e_1)=B(e_2)=H$.
    \item Any two triangular-blocks of $G$ are edge disjoint. \label{it:obs2}
    \item If $B$ is a triangular-block with the unbounded region being a $3$-face, then $B$ is a triangulation graph.
\end{enumerate}

Let $\mathcal{B}$ be the family of triangular-blocks of $G$. From observation \ref{it:obs2} above, we have
\begin{align*}
    e(G)=\sum\limits_{B\in\mathcal{B}}e(B),
\end{align*}
where $e(G)$ and $e(B)$ are the number of edges of $G$ and $B$ respectively.

Next, we distinguish the types of triangular-blocks that a $C_6$-free plane graph may contain. The following lemma gives us the bound on the number of vertices of triangular-blocks.
\begin{lemma}\label{lem:5block}
    Every triangular-block of $G$ contains at most $5$ vertices.
\end{lemma}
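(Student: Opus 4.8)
The plan is to show that any triangular-block with at least $6$ vertices contains a $C_6$, which is impossible since $G$ is $C_6$-free. First I would pin down the structure of a (nontrivial) triangular-block $B$. Starting from a single $K_3$ and repeatedly gluing a new $3$-face along an existing edge preserves $2$-connectivity (attaching a triangle onto an edge of a $2$-connected graph keeps it $2$-connected), so by induction on the number of glued faces $B$ is $2$-connected, and by construction every bounded face of $B$ is a triangle. Hence $B$ is a triangulated disk: its outer boundary is a cycle of some length $m\ge 3$, the remaining $v(B)-m$ vertices lie in the interior, and each interior vertex $w$ has its neighbours arranged in a cyclic \emph{link} $u_1u_2\cdots u_{d}$ (a cycle of length $d=d_B(w)$), since all faces incident to $w$ are triangles. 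A trivial block is a single edge on $2$ vertices, so from now on I assume $B$ is nontrivial and, for contradiction, that $v(B)\ge 6$.

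Next I would take a counterexample $B$ with $v(B)\ge 6$ minimal and show it must satisfy $v(B)=6$. Two deletions preserve the class of triangulated disks and drop the vertex count by one: deleting an interior vertex of degree $3$ (its three incident faces merge into the single triangle on its link) and deleting a boundary vertex of degree $2$ (an \emph{ear}). If $B$ has no interior vertex it is a triangulated polygon on $m=v(B)\ge 4$ vertices, which always has at least two ears; deleting one gives a smaller triangulated disk on $\ge 6$ vertices, still $C_6$-free, contradicting minimality. If $B$ has an interior vertex of degree $3$, the same contradiction follows. So in the minimal counterexample every interior vertex has degree at least $4$, and I only need to produce a $C_6$ directly. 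If some interior vertex $w$ has degree $d\ge 5$, then five consecutive link vertices together with $w$ give the $6$-cycle $u_1u_2u_3u_4u_5w$. If $w$ has degree exactly $4$ with link $u_1u_2u_3u_4$, then since $v(B)\ge6$ not all four link edges can be boundary edges, so some link edge, say $u_1u_2$, carries a second incident triangle $u_1u_2z$; provided $z\notin\{u_3,u_4\}$ this yields the $6$-cycle $z\,u_1\,u_4\,w\,u_3\,u_2$.

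The base case $v(B)=6$ I would settle by a short enumeration organised by the boundary length $m\in\{3,4,5,6\}$: when $m=6$ the boundary itself is a $C_6$; when $m\le 5$ there is an interior vertex and the wheel/degree-$4$ arguments above apply. The step I expect to be the main obstacle is making the degree-$4$ interior case airtight, namely ruling out or separately handling the degenerate situations in which the apex $z$ of every interior link edge coincides with another link vertex (which forces extra chords and requires locating the sixth cycle vertex elsewhere in $B$), together with verifying completeness of the $6$-vertex base enumeration. Everything else is routine, so the write-up would concentrate its effort there.
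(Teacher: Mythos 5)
Your high-level plan (any triangular-block on at least $6$ vertices forces a $C_6$) matches the paper's, but the proposal has two genuine gaps, one of which undermines its foundation. The claim that a nontrivial triangular-block is a triangulated disk ``by construction'' is not justified, and is false for general plane graphs: the block is built by adjoining $3$-faces of $G$ along shared edges, and nothing prevents this union of triangles from closing up around a non-triangular face of $G$. For example, a ring of $3$-faces surrounding a $7$-face (inner cycle $v_1\cdots v_7$, outer cycle $u_1\cdots u_7$, triangles $v_iu_iv_{i+1}$ and $u_iv_{i+1}u_{i+1}$) is a single triangular-block that is an annulus, not a disk; its bounded faces are not all triangles, and the vertices on the hole do not have cyclic links. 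Such blocks do turn out to be impossible in a $C_6$-free graph, but that is part of what must be proved; you cannot assume the disk structure (boundary cycle, interior vertices, links, ears) that all of your subsequent machinery -- ear deletion, interior degree-$3$ deletion, the wheel and degree-$4$ link arguments -- depends on. The paper sidesteps this by deleting vertices of the unbounded face's boundary until exactly $6$ remain and then analyzing that $6$-vertex configuration directly, split according to whether it contains a separating triangle.

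Second, the two items you defer as ``the main obstacle'' -- the degenerate degree-$4$ case $z\in\{u_3,u_4\}$ and the completeness of the $6$-vertex base enumeration -- are not finishing touches but the actual content of the lemma, and the base case as described is incorrect. Minimality gives you nothing when $v(B)=6$: deleting an interior degree-$3$ vertex then yields a $5$-vertex disk, which is not a counterexample, so no contradiction arises and you must find a $C_6$ in such configurations directly. But for a $6$-vertex disk whose interior vertex has degree $3$, neither the wheel argument nor the degree-$4$ argument applies. Concretely, take boundary pentagon $p_1\cdots p_5$, interior vertex $w$ adjacent to $p_1,p_2,p_3$, and chords $p_1p_3$, $p_3p_5$: the $6$-cycle is $wp_2p_3p_4p_5p_1w$, obtained by detouring the boundary through $w$, an argument type absent from your plan. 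What you have left open corresponds exactly to the case analysis that constitutes the paper's proof (its Figure of the two $6$-vertex structures, with and without a separating triangle), so the proposal as written is an outline whose unproven steps are the theorem's core, resting on a structural assumption that itself requires proof.
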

\begin{proof}
    We prove it by contradiction. Let $B$ be a triangular-block of $G$ containing at least $6$ vertices. We perform the following operations: delete one vertex from the boundary of the unbounded face of $B$ sequentially until the number of vertices of the new triangular block $B^{'}$ is $6$.  Next, we show that $B'$ is not a triangular-block in $G$. Suppose that it is. We consider the following two cases to complete the proof.

    \begin{case} $B'$ contains a separating triangle.\end{case}
        Let $v_1v_2v_3$ be the separating triangle. Without loss of generality, assume that the inner region of the triangle contains two vertices say, $v_4$ and $v_5$. The outer region of the triangle contains one vertex, say $v_6$. Since the unbounded face is a $3$-face, the inner structure is a triangulation. Without loss of generality, let the inner structure be as shown in Figure \ref{fig:blocksize}(a). Now consider the vertex $v_6$. If $v_1,v_2\in N(v_6)$, then $v_3v_4v_5v_2v_6v_1v_3$ is a $6$-cycle in $G$, a contradiction. Similarly for the cases when $v_1,v_3\in N(v_6)$ and $v_2,v_3\in N(v_6)$.

    \begin{case} $B'$ contains no separating triangle.\end{case}
        Consider a triangular face $v_1v_2v_3v_1$. Let $v_4$ be a vertex in the triangular-block such that $v_2v_3v_4v_2$ is a $3$-face. Notice that $v_1v_4 \notin E(B')$, otherwise we get a separating triangle in $B'$. Let $v_5$ be a vertex in $B'$ such that $v_2v_4v_5v_2$ is a $3$-face. Notice that $v_6$ cannot be adjacent to both vertices in any of the pairs $\{v_1,v_2\}$, $\{v_1,v_3\}$, $\{v_2,v_5\}$, $\{v_3,v_4\}$, or $\{v_4,v_5\}$. Otherwise, $C_6\subset G$. Also $v_3v_5 \notin E(B')$, otherwise we have a separating triangle. So, let $v_1v_5\in E(B')$ and $v_1,v_5\in N(v_6)$ (see Figure~\ref{fig:blocksize}(b)). In this case $v_1v_6v_5v_2v_4v_3v_1$ results in a $6$-cycle, a contradiction.
\end{proof}

        \begin{figure}[ht]
            \centering
            \begin{tikzpicture}[scale=0.2]
            \coordinate (v2) at (-8,0);
            \coordinate (v1) at (0,16);
            \coordinate (v3) at (8,0);
            \coordinate (v4) at (0,4);
            \coordinate (v5) at (0,7);
            \draw[very thick] (v2) -- (v1) -- (v3) -- (v2);
            \draw[very thick] (v2) -- (v4) -- (v3);
            \draw[very thick] (v2) -- (v5) -- (v3);
            \draw[very thick] (v1) -- (v5) -- (v4);
            \draw[fill=black] (v2) circle(\circ)  node[label=below:$v_2$] {};
            \draw[fill=black] (v1) circle(\circ)  node[label=right:$v_1$] {};
            \draw[fill=black] (v3) circle(\circ)  node[label=below:$v_3$] {};
            \draw[fill=black] (v5) circle(\circ)  node[label={[label distance=-1pt]0:$v_5$}] {};
            \draw[fill=black] (v4) circle(\circ)  node[label={[label distance=-1pt]270:$v_4$}] {};
            \node at (0,-5){(a)};
        \end{tikzpicture}
        \qquad\qquad\qquad\qquad
        \begin{tikzpicture}[scale=0.2]
            \coordinate (v1) at (-8,7);
            \coordinate (v2) at (0,7);
            \coordinate (v3) at (-4,0);
            \coordinate (v4) at (4,0);
            \coordinate (v5) at (8,7);
            \coordinate (v6) at (0,16);
            \draw[very thick] (v1) -- (v3) -- (v4) -- (v5) -- (v6) -- (v1);
            \draw[very thick] (v1) -- (v2) -- (v5);
            \draw[very thick] (v3) -- (v2) -- (v4);
            \draw[very thick] (v1) to[out=45,in=135,distance=250pt] (v5);
            \draw[fill=black] (v1) circle(\circ)  node[label=above:$v_1$] {};
            \draw[fill=black] (v2) circle(\circ)  node[label=above:$v_2$] {};
            \draw[fill=black] (v3) circle(\circ)  node[label=below:$v_3$] {};
            \draw[fill=black] (v4) circle(\circ)  node[label=below:$v_4$] {};
            \draw[fill=black] (v5) circle(\circ)  node[label=above:$v_5$] {};
            \draw[fill=black] (v6) circle(\circ)  node[label=right:$v_6$] {};
            \node at (0,-5){(b)};
        \end{tikzpicture}
        \caption{The structure of $B'$ when it contains a separating triangle or not, respectively.}
        \label{fig:blocksize}
    \end{figure}
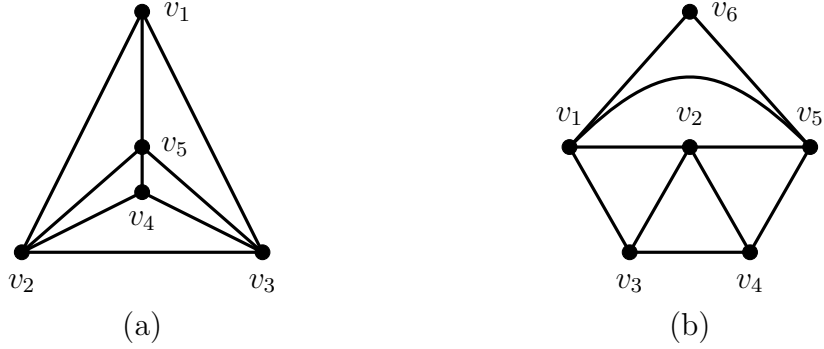

Now we describe all possible triangular-blocks in $G$ based on the number of vertices the block contains. For $k\in\{2,3,4,5\}$, we denote the triangular-blocks on $k$ vertices as $B_k$.

\subsubsection*{Triangular-blocks on $5$ vertices.}
There are four types of triangular-blocks on $5$ vertices (see Figure~\ref{fig:5blocks}). Notice that $B_{5,a}$ is a $K_5^-$ .
\begin{figure}[ht]
    \centering
    \begin{tikzpicture}[scale=0.2]
        \coordinate (x1) at (-4,0);
        \coordinate (x2) at (0,8);
        \coordinate (x3) at (4,0);
        \coordinate (x4) at (0,2);
        \coordinate (x5) at (0,5);
        \draw[very thick] (x1) -- (x2) -- (x3) -- (x1);
        \draw[very thick] (x1) -- (x4) -- (x3);
        \draw[very thick] (x1) -- (x5) -- (x3);
        \draw[very thick] (x2) -- (x5) -- (x4);
        \draw[fill=black] (x1) circle(\circ);
        \draw[fill=black] (x2) circle(\circ);
        \draw[fill=black] (x3) circle(\circ);
        \draw[fill=black] (x4) circle(\circ);
        \draw[fill=black] (x5) circle(\circ);
        \node at (0,-3){$B_{5,a}$};
    \end{tikzpicture}
    \qquad\qquad
    \begin{tikzpicture}[scale=0.2]
        \coordinate (x1) at (-4,0);
        \coordinate (x2) at (4,0);
        \coordinate (x3) at (4,8);
        \coordinate (x4) at (-4,8);
        \coordinate (x5) at (0,4);
        \draw[very thick] (x1) -- (x2) -- (x3) -- (x4) -- (x1);
        \draw[very thick] (x1) -- (x5);
        \draw[very thick] (x2) -- (x5);
        \draw[very thick] (x3) -- (x5);
        \draw[very thick] (x4) -- (x5);
        \draw[fill=black] (x1) circle(\circ);
        \draw[fill=black] (x2) circle(\circ);
        \draw[fill=black] (x3) circle(\circ);
        \draw[fill=black] (x4) circle(\circ);
        \draw[fill=black] (x5) circle(\circ);
        \node at (0,-3){$B_{5,b}$};
    \end{tikzpicture}
    \qquad\qquad
    \begin{tikzpicture}[scale=0.2]
        \coordinate (x1) at (-4,0);
        \coordinate (x2) at (4,0);
        \coordinate (x3) at (7,4);
        \coordinate (x4) at (4,8);
        \coordinate (x5) at (-4,8);
        \draw[very thick] (x1) -- (x2) -- (x3) -- (x4) -- (x5) -- (x1);
        \draw[very thick] (x1) -- (x4) -- (x2);
        \draw[fill=black] (x1) circle(\circ);
        \draw[fill=black] (x2) circle(\circ);
        \draw[fill=black] (x3) circle(\circ);
        \draw[fill=black] (x4) circle(\circ);
        \draw[fill=black] (x5) circle(\circ);
        \node at (0,-3){$B_{5,c}$};
    \end{tikzpicture}
    \qquad\qquad
    \begin{tikzpicture}[scale=0.2]
        \coordinate (x1) at (0,0);
        \coordinate (x2) at (4,4);
        \coordinate (x3) at (0,8);
        \coordinate (x4) at (-4,4);
        \coordinate (x5) at (2,4);
        \draw[very thick] (x1) -- (x2) -- (x3) -- (x4) -- (x1);
        \draw[very thick] (x1) -- (x3);
        \draw[very thick] (x1) -- (x5) -- (x3);
        \draw[very thick] (x2) -- (x5);
        \draw[fill=black] (x1) circle(\circ);
        \draw[fill=black] (x2) circle(\circ);
        \draw[fill=black] (x3) circle(\circ);
        \draw[fill=black] (x4) circle(\circ);
        \draw[fill=black] (x5) circle(\circ);
        \node at (0,-3){$B_{5,d}$};
    \end{tikzpicture}
    \caption{Triangular-blocks on $5$ vertices.}
    \label{fig:5blocks}
\end{figure}
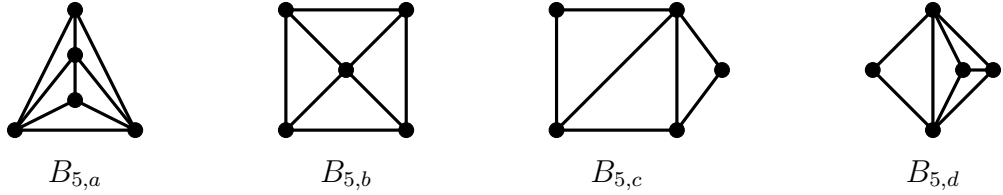

\subsubsection*{Triangular-blocks on $4$, $3$, and $3$ vertices.}
There are two types of triangular-blocks on $4$ vertices. See Figure~\ref{fig:432blocks}. Observe that $B_{4,a}$ is a $K_4$. The $3$-vertex and $2$-vertex triangular-blocks are simply $K_3$ and $K_2$ (the trivial triangular-block), respectively.
\begin{figure}[ht]
    \centering
    \begin{tikzpicture}[scale=0.2]
        \coordinate (x1) at (-4,0);
        \coordinate (x2) at (4,0);
        \coordinate (x3) at (0,8);
        \coordinate (x4) at (0,3);
        \draw[very thick] (x1) -- (x2) -- (x3) -- (x1);
        \draw[very thick] (x1) -- (x4);
        \draw[very thick] (x2) -- (x4);
        \draw[very thick] (x3) -- (x4);
        \draw[fill=black] (x1) circle(\circ);
        \draw[fill=black] (x2) circle(\circ);
        \draw[fill=black] (x3) circle(\circ);
        \draw[fill=black] (x4) circle(\circ);
        \node at (0,-3){$B_{4,a}$};
    \end{tikzpicture}
    \qquad\qquad
    \begin{tikzpicture}[scale=0.2]
        \coordinate (x1) at (0,0);
        \coordinate (x2) at (4,4);
        \coordinate (x3) at (0,8);
        \coordinate (x4) at (-4,4);
        \draw[very thick] (x1) -- (x2) -- (x3) -- (x4) -- (x1);
        \draw[very thick] (x2) -- (x4);
        \draw[fill=black] (x1) circle(\circ);
        \draw[fill=black] (x2) circle(\circ);
        \draw[fill=black] (x3) circle(\circ);
        \draw[fill=black] (x4) circle(\circ);
        \node at (0,-3){$B_{4,b}$};
    \end{tikzpicture}
    \qquad\qquad
    \begin{tikzpicture}[scale=0.2]
        \coordinate (x1) at (-4,0);
        \coordinate (x2) at (4,0);
        \coordinate (x3) at (0,8);
        \draw[very thick] (x1) -- (x2) -- (x3) -- (x1);
        \draw[fill=black] (x1) circle(\circ);
        \draw[fill=black] (x2) circle(\circ);
        \draw[fill=black] (x3) circle(\circ);
        \node at (0,-3){$B_3$};
    \end{tikzpicture}
    \qquad\qquad
    \begin{tikzpicture}[scale=0.2]
        \coordinate (x1) at (-4,4);
        \coordinate (x2) at (4,4);
        \draw[very thick] (x1) -- (x2);
        \draw[fill=black] (x1) circle(\circ);
        \draw[fill=black] (x2) circle(\circ);
        \node at (0,-3){$B_2$};
    \end{tikzpicture}
    \caption{Triangular-blocks on 4,3 and 2 vertices.}
    \label{fig:432blocks}
\end{figure}
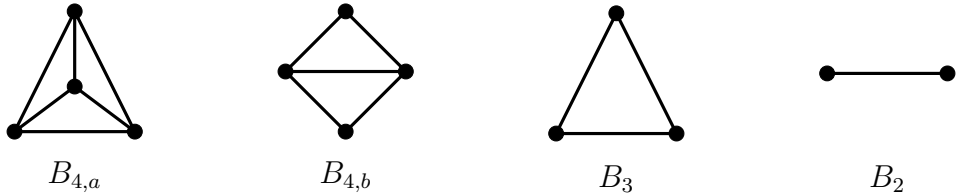

\begin{definition} Let $G$ be a plane graph.
    \begin{enumerate}[label=(\roman*)]
        \item A vertex $v$ in $G$ is called a \textbf{junction vertex} if it is in at least two distinct triangular-blocks of $G$.
        \item Let $B$ be a triangular-block in $G$. An edge of $B$ is called an \textbf{exterior edge} if it is on a boundary of non-triangular face of $G$. Otherwise, we call it an \textbf{interior edge}. An endvertex of an exterior edge is called an \textbf{exterior vertex}. We denote the set of all exterior and interior edges of $B$ by $Ext(B)$ and $Int(B)$ respectively. Let $e\in Ext(B)$, a non-triangular face of $G$ with $e$ on the boundary is called the \textbf{exterior face} of $e$.
    \end{enumerate}
\end{definition}

Notice that an exterior edge of a non-trivial triangular-block has exactly one exterior face. On the other hand,  if $G$ is a $2$-connected plane graph, then every trivial triangular-block has two exterior faces.
For a non-trivial triangular-block $B$ of a plane graph $G$, we call a path $P=v_1v_2v_3\dots v_k$ an \textit{exterior path} of $B$, if $v_1$ and $v_k$ are junction vertices and $v_iv_{i+1}$ are exterior edges of $B$ for $i\in\{1,2,\dots,k-1\}$ and $v_j$ is not junction vertex for all $j\in\{2,3,\dots,k-1\}$. The corresponding face in $G$ where $P$ is on the boundary of the face is called the \textit{exterior face} of $P$.

Next, we give the definition of the contribution of a vertex and an edge to the number of vertices and faces of $C_6$-free plane graph $G$. All graphs discussed from now on are $C_6$-free plane graph.
\begin{definition}
Let $G$ be a plane graph, $B$ be a triangular-block in $G$ and $v\in V(B)$. The contribution of $v$ to the vertex number of $B$ is denoted by $n_B(v)$, and is defined as
\begin{equation*}
    n_B(v) = \dfrac{1}{\#~\mbox{triangular-blocks in $G$ containing $v$}}.
\end{equation*}
We define the contribution of $B$ to the number of vertices of $G$ as $n(B)=\sum\limits_{v\in V(B)}n_B(v)$.
\end{definition}
Obviously, $v(G)=\sum\limits_{B\in \mathcal{B}}n(B)$, where $v(G)$ is the number of vertices in $G$ and  $\mathcal{B}$ is the family of triangular-blocks of $G$.

Let $B_{K^{-}_5}$ be a triangular-block of $G$ isomorphic to a $B_{5,a}$ with exterior vertices $v_1,v_2,v_3$, where $v_1$ and $v_3$ are junction vertices, see Figure \ref{fig:11to9} for an example. Let $F$ be a face in $G$ such that $V(F)$ contains all exterior vertices $v_{1,1}, \dots, v_{1,m}, v_{2,1}, \dots, v_{2,m}, v_{3,1}, \dots, v_{3,m}$ of $m$ $(m\geq 1)$ copies of $B_{K^-_5}$, such that $v_{1,i}, v_{2,i}, v_{3,i}$ are the exterior vertices of the $i$-th $B_{K^{-}_5}$ and $v_{1,i}$, $v_{3,i}$ $(1\leq i\leq m)$ are junction vertices.  Let $C_F$ denote the cycle associated with the face $F$.  We alter $E(C_F)$ in the following way:
\begin{align*}
    E(C_F'):=E(C_F)-\{v_{1,1}v_{2,1}v_{3,1}\}-\dots-\{v_{1,m}v_{2,m}v_{3,m}\}\cup\{v_{1,1}v_{3,1}\}\cup \dots \cup\{v_{1,m},v_{3,m}\}.
\end{align*}
Hence, the length of $F$ as $|E(C_F')|=|E(C_F)|-m$. For example, in Figure \ref{fig:11to9}, $|E(C_F)|=11$ but $|E(C_F')|=9$.

\begin{figure}[ht]
    \centering
    \begin{tikzpicture}[scale=0.2]
        \def \n {9};
        \def \radius {15};
        \coordinate (v1) at ({(360/\n) * (4.5) + 90}:{\radius-13});
        \coordinate (v2) at ({(360/\n) * (4) + 90}:\radius);
        \coordinate (v3) at ({(360/\n) * (5) + 90}:\radius);
        \coordinate (v4) at ({(360/\n) * (4.5) + 90}:{\radius-4});
        \coordinate (v5) at ({(360/\n) * (4.5) + 90}:{\radius-6.5});
        \coordinate (leafl) at ({(360/\n) * (3.75) + 90}:{\radius+5});
        \coordinate (leafr) at ({(360/\n) * (5.25) + 90}:{\radius+5});
        \draw[red,very thick] (leafl) -- (v2);
        \draw[red,very thick] (leafr) -- (v3);
        \draw[fill=black] (leafl) circle(\circ);
        \draw[fill=black] (leafr) circle(\circ);
        \draw[very thick] (v2) -- (v1) -- (v3);
        \draw[very thick] (v2) -- (v4) -- (v3);
        \draw[very thick] (v2) -- (v5) -- (v3);
        \draw[very thick] (v1) -- (v5) -- (v4);
        \draw[fill=black] (v1) circle(\circ) node[label=right:$v_1$] {};
        \draw[fill=black] (v2) circle(\circ) node[label=below:$v_2$] {};
        \draw[fill=black] (v3) circle(\circ) node[label=below:$v_3$] {};
        \draw[fill=black] (v4) circle(\circ) node[label={[label distance=-2pt]270:$v_4$}] {};
        \draw[fill=black] (v5) circle(\circ) node[label={[label distance=-9pt]60:$v_5$}] {};
        \coordinate (w1) at ({(360/\n) * (0.5) + 90}:{\radius-13});
        \coordinate (w2) at ({(360/\n) * (0) + 90}:\radius);
        \coordinate (w3) at ({(360/\n) * (1) + 90}:\radius);
        \coordinate (w4) at ({(360/\n) * (0.5) + 90}:{\radius-4});
        \coordinate (w5) at ({(360/\n) * (0.5) + 90}:{\radius-7});
        \draw[very thick] (w2) -- (w1) -- (w3);
        \draw[very thick] (w2) -- (w4) -- (w3);
        \draw[very thick] (w2) -- (w5) -- (w3);
        \draw[very thick] (w1) -- (w5) -- (w4);
        \draw[fill=black] (w1) circle(\circ);
        \draw[fill=black] (w4) circle(\circ);
        \draw[fill=black] (w5) circle(\circ);
        \foreach \s in {1,...,\n}
        {
            \def \here {{(360/\n) * (\s - 1) + 90}}
            \def \next {{(360/\n) * (\s) + 90}}
            \draw[very thick] (\here : \radius) -- (\next : \radius);
           \draw[fill=black] (\here : \radius) circle(\circ);
        }
    \end{tikzpicture}
    \caption{An example of a face containing all the exterior vertices of at least one $B_{K^-_5}$.}
    \label{fig:11to9}
\end{figure}
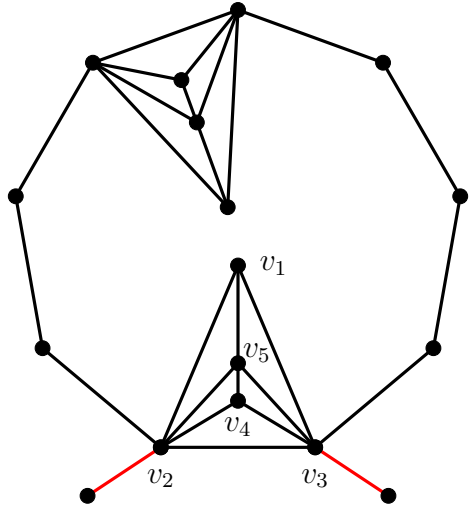

Now we are able to define the \textbf{contribution} of an ``\textit{edge}'' to the number of faces of $C_6$-free plane graph $G$.
\begin{definition}\label{bvvc}
    Let $F$ be a exterior face of $G$ and $C_F:=\{e_1,e_2,\dots, e_k\}$ be the cycle associated with $F$. The contribution of an exterior edge $e$ to the face number of the exterior face $F$, is denoted by $f_F(e)$, and is defined as follows.
    \begin{enumerate}[label=(\roman*)]
        \item If $e_1$ and $e_2$ are adjacent exterior edges of $B_{K_5^-}$, then $f_F(e_1)+f_F(e_2)=\dfrac{1}{|C_F'|}$, and $f_F(e_i)=\dfrac{1}{|C_F'|}, \text{ where }i\in\{3,4,\dots,k\}$.
        \item Otherwise,  $f_F(e)=\dfrac{1}{|C_F|}$.
    \end{enumerate}
\end{definition}

Note that $\sum\limits_{e\in E(F)}f_F(e)=1$.  For a triangular-block $B$, the total face contribution of $B$ is denoted by $f_B$ and defined as $f_B=(\#~\mbox{interior faces of $B$}) + \sum\limits_{e\in Ext(B)}f_F(e)$, where $F$ is the exterior face of $B$ with
respective to $e$.
Obviously, $f(G)=\sum\limits_{B\in\mathcal{B}}f(B)$, where $f(G)$ is the number of faces of $G$.

\section{Proof of Theorem~\ref{thm:main}}
We begin by outlining our proof. Let $f$, $n$, and $e$ be the number of faces, vertices, and edges of $G$ respectively. Let $\mathcal{B}$ be the family of all triangular-blocks of $G$.

The main target of the proof is to show that
\begin{align}
    7 f + 2 n - 5 e \leq 0\label{eq:main}.
\end{align}

Once we show \eqref{eq:main}, then by using Euler's Formula, $e=f+n-2$, we can finish the proof of Theorem~\ref{thm:main}. To prove \eqref{eq:main}, we show the existence of a partition $\mathcal{P}_1, \mathcal{P}_2,\dots,\mathcal{P}_m$ of $\mathcal{B}$ such that $7\sum\limits_{B\in \mathcal{P}_i}f(B)+2\sum\limits_{B\in\mathcal{P}_i}n(B)-5\sum\limits_{B\in \mathcal{P}_i}e(B)\leq 0$, for all $i\in\{1,2,3\dots,m\}$. Since $f=\sum\limits_{B\in \mathcal{B}}f(B)$, $n=\sum\limits_{B\in\mathcal{B}}n(B)$ and $e=\sum\limits_{B\in \mathcal{B}}e(B)$ we have
\begin{align*}
    7 f + 2 n - 5 e
    &= 7\sum\limits_{i}^{m}\sum\limits_{B\in\mathcal{P}_i}f(B) + 2\sum\limits_{i}^{m}\sum\limits_{B\in\mathcal{P}_i}n(B) - 5\sum\limits_{i}^{m}\sum\limits_{B\in\mathcal{P}_i}e(B)\\
    &= \sum\limits_{i}^{m}\bigg(7\sum\limits_{B\in\mathcal{P}_i}f(B) + 2\sum\limits_{B\in\mathcal{P}_i}n(B) - 5\sum\limits_{B\in\mathcal{P}_i}e(B)\bigg)\leq 0.
\end{align*}

The following proposition will be useful in many lemmas.
\begin{proposition}\label{prop:faces}
    Let $G$ be a $2$-connected, $C_6$-free plane graph on $n$ $(n\geq 6)$ vertices with $\delta(G)\geq 3$.
    \begin{enumerate}[label=(\roman*)]
        \item If $B$ is a nontrivial triangular-block (that is, not $B_2$), then none of the exterior faces can have length $5$. \label{it:faces:no5face}
        \item If $B$ is in $\{B_{5,a},B_{5,b},B_{5,c},B_{4,a}\}$, then none of the exterior faces can have length $4$. \label{it:faces:no4face}
        \item If $B$ is in $\{B_{5,d},B_{4,b}\}$ and an exterior face of $B$ has length $4$, then that $4$-face must share a $2$-path with $B$ (shown in blue in Figures~\ref{fig:B5d} and~\ref{fig:B4b}) and the other edges of the face must be in trivial triangular-blocks. \label{it:faces:yes4face}
        \item No two $4$-faces can be adjacent to each other. \label{it:faces:two4faces}
    \end{enumerate}
\end{proposition}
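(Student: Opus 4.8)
The plan is to prove all four parts by the same mechanism: since $G$ is $C_6$-free, it suffices to exhibit a $6$-cycle whenever a forbidden configuration occurs, and every such $6$-cycle will be assembled from two internally disjoint paths---one running through the triangular-block $B$ (or through a triangle sitting on one of its exterior edges) and one running along the offending face. Controlling which vertices the face shares with $B$ is what will guarantee the two paths are disjoint.

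For part \ref{it:faces:no5face} I would take an exterior edge $uv$ of the nontrivial block $B$ lying on a $5$-face $F=uvxyz$. Because $B$ is nontrivial, $uv$ also lies on a $3$-face $uvw$ with $w\in V(B)$; as $F$ and the triangle $uvw$ are the two faces meeting along $uv$, the apex $w$ sits on the far side of $uv$ and is distinct from $x,y,z$. Replacing the edge $uv$ of $F$ by the path $u\,w\,v$ then yields the $6$-cycle $u\,w\,v\,x\,y\,z$, a contradiction. For part \ref{it:faces:no4face} I would first record the elementary per-block fact that in each of $B_{5,a},B_{5,b},B_{5,c},B_{4,a}$ every exterior edge is joined to its other endpoint by a path of length exactly $3$ inside the block. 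If an exterior $4$-face $F$ shares only that one edge, then its three remaining edges form a length-$3$ arc whose two interior vertices lie outside $B$, and concatenating with the internal $3$-path closes a $6$-cycle. The only alternative is that $F$ shares a $2$-path $u\,v\,w$ with $B$; but then either $u$ and $w$ are already adjacent in $B$ (so the putative $4$-face is really a triangle) or $B$ contains a Hamiltonian $u$--$w$ path of length $4$, which together with the length-$2$ arc $u\,x\,w$ of $F$ again gives a $6$-cycle. Checking that one of these two alternatives always holds for each block completes part \ref{it:faces:no4face}.

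For part \ref{it:faces:yes4face} the same length-$3$-path observation shows a $4$-face on $B_{5,d}$ or $B_{4,b}$ cannot share only a single exterior edge, so it must share a $2$-path $u\,v\,w$. Here I would invoke a geometric constraint from $\delta(G)\ge 3$: the middle vertex $v$ can have its two shared edges consecutive in its rotation only if $v$ sends no edge out of the block, i.e.\ its degree within $B$ is at least $3$; this pins the shared $2$-path to the distinguished paths joining the two endpoints of the interior diagonal (the blue paths in Figures~\ref{fig:B5d} and~\ref{fig:B4b}), the remaining candidate $2$-paths being excluded because their endpoints admit a length-$4$ path in $B$. Finally, to force the two non-shared edges $u\,x$ and $x\,w$ into trivial blocks, I would suppose one of them, say $x\,w$, lies on a $3$-face $x\,w\,z$; then $u\,x\,z\,w$ is a length-$3$ external arc, which combined with a length-$3$ path from $w$ back to $u$ inside $B$ produces a $6$-cycle. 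Part \ref{it:faces:two4faces} is the cleanest: if two $4$-faces $F_1=uvpq$ and $F_2=uvrs$ share the edge $uv$, then deleting $uv$ leaves the hexagon $v\,p\,q\,u\,s\,r$, a $6$-cycle; any coincidence among $p,q,r,s$ would force $F_1,F_2$ to share a second edge or place $uv$ on a $3$-face, neither possible for two distinct $4$-faces meeting along $uv$.

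I expect part \ref{it:faces:yes4face} to be the main obstacle. The $6$-cycle hunts in \ref{it:faces:no5face}, \ref{it:faces:no4face}, and \ref{it:faces:two4faces} are short once the correct two arcs are identified, but \ref{it:faces:yes4face} requires simultaneously using the minimum-degree hypothesis to force the shared $2$-path into the correct position, eliminating the spurious $2$-paths via the length-$4$ paths, and ruling out triangles on the two remaining face edges---each an explicit and somewhat delicate check. The recurring technical point throughout is vertex-distinctness of the assembled $6$-cycle, and I would control it by always splitting on exactly how many edges the face shares with $B$, since this guarantees the face's private vertices lie outside $B$ and hence avoid the block-internal path.
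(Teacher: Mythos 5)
Your overall plan is the same as the paper's: assemble a $C_6$ from a path through the block and an arc of the offending face, with $\delta(G)\geq 3$ controlling how the face can meet the block; your parts (i) and (iii) track the paper's argument closely. The genuine problems are in parts (ii) and (iv), and both occur at the same spot: the configurations that the paper eliminates with the minimum-degree hypothesis, you dismiss with a false or unjustified claim. In (ii), the assertion that if the endpoints $u,w$ of the shared $2$-path are adjacent in $B$ then ``the putative $4$-face is really a triangle'' is wrong: a face of length $4$ remains a face of length $4$ when its opposite corners are joined by an edge, since that edge simply lies in some other face (a quadrilateral with a chord drawn inside it still has the quadrilateral as a face). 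And the adjacent-endpoint cases are exactly the ones that need work: for $B_{5,c}$, a $4$-face can only contain the chord-joined pair $x_2,x_4$ by sharing the $2$-path $x_2x_3x_4$, and there is no $4$-path from $x_2$ to $x_4$ in $B_{5,c}$ (any $x_2$--$x_4$ path through $x_3$ must use both edges at $x_3$, hence is the path $x_2x_3x_4$ itself), so neither branch of your dichotomy applies; the paper's contradiction is instead that the degree-$2$ vertex $x_3$ of the block would have degree $2$ in $G$. The same double failure occurs for $B_{4,a}$: the endpoints are adjacent in $K_4$, and a path of length $4$ would need five vertices.

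In (iv) you assert that two distinct $4$-faces meeting along $uv$ cannot share a second edge, but that is precisely what must be proved, and it is not a consequence of planarity or of the faces being distinct: take a quadrilateral $uvpq$ and a vertex $s$ outside it adjacent to $u$ and $p$; then $uvpq$ and $uvps$ are distinct $4$-faces sharing the $2$-path $uvp$. What makes this impossible in $G$ is the paper's observation that the midpoint $v$ of a shared $2$-path has both its edges on the two $4$-faces and hence has degree $2$, contradicting $\delta(G)\geq 3$; this degree argument does not appear anywhere in your proof. (Your coincidence dichotomy is also incomplete on its own terms: with $F_1=uvpq$, $F_2=uvrs$ and the coincidence $s=p$, the faces can share only the edge $uv$ and $uv$ need not lie on any $3$-face, yet the symmetric difference of the two boundaries is not a $6$-cycle.) So parts (ii) and (iv) stand or fall with the minimum-degree argument, and it needs to be restored.
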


\begin{proof}
    \begin{itemize}
        \item[\ref{it:faces:no5face}] Observe that any pair of consecutive exterior vertices of a nontrivial triangular-block has a path of length $2$ (counted by the number of edges) between them and any pair of nonconsecutive exterior vertices has a path of length $3$ between them. So having a face of length $5$ incident to this triangular-block would yield a $C_6$, a contradiction.

        \item[\ref{it:faces:no4face}] If $B$ is in $\{B_{5,a},B_{5,b},B_{5,c},B_{4,a}\}$, then any pair of consecutive exterior vertices of the listed triangular-blocks has a path of length $3$ between them. It remains to consider nonconsecutive vertices for $\{B_{5,b},B_{5,c}\}$. For $B_{5,b}$ each pair of nonconsecutive exterior vertices has a path of length $3$ between them. In the case where $B$ is $B_{5,c}$, this is true for all pairs without an edge between them. As for the other pairs, if they are in the same $4$-face, then at least one of the degree-$2$ vertices in $B$ must have degree $2$ in $G$, a contradiction.

        \item[\ref{it:faces:yes4face}] In both $B_{5,d}$ and $B_{4,b}$, any pair of consecutive exterior vertices has a path of length $3$ between them. For $B_{5,d}$, in Figure~\ref{fig:B5d}, we see that there is a path of length $4$ between $v_2$ and $v_4$ and so the only way a $4$-face can be adjacent to $B$ is via a $2$-path with endvertices $v_1$ and $v_3$. In fact, because there is no vertex of degree $2$, the path must be $v_1v_4v_3$.  For $B_{4,b}$, in Figure~\ref{fig:B5d}, we see that because $B$ cannot have a vertex of degree $2$, the $4$-face and $B$ cannot share the path $v_2v_1v_4$ or the path $v_2v_3v_4$. Thus the only paths that can share a boundary with a $4$-face are $v_1v_4v_3$ and $v_1v_2v_3$.

        As to the other blocks that form edges of such a $4$-face. In Figure~\ref{fig:4face}, we see that if, say, $v_1u$ is in a nontrivial triangular-block, then there is a vertex $w$ in that block, in which case $wv_1xv_4v_3uw$ forms a $6$-cycle, a contradiction.

        \item[\ref{it:faces:two4faces}] If two $4$-faces share an edge, then there is a $6$-cycle formed by deleting that edge. If two $4$-faces share a $2$-path, then the midpoint of that path is a vertex of degree $2$ in $G$. In both cases, a contradiction.
    \end{itemize}
\end{proof}

\begin{figure}[ht]
    \centering
    \begin{tikzpicture}[scale=0.2]
        \coordinate (v1) at (0,12);
        \coordinate (v2) at (-6,6);
        \coordinate (v3) at (0,0);
        \coordinate (v4) at (8,6);
        \coordinate (v5) at (4,6);
        \coordinate (u) at (14,6);
        \coordinate (w) at (10,14);
        \draw[blue,very thick] (v1) -- (u) -- (v3);
        \draw[blue,dashed,very thick] (v1) -- (w) -- (u);
        \draw[very thick] (v1) -- (v4) -- (v3) -- (v2) -- (v1);
        \draw[very thick] (v1) -- (v3);
        \draw[very thick] (v1) -- (v5) -- (v3);
        \draw[very thick] (v4) -- (v5);
        \draw[fill=black] (v1) circle(\circ)  node[label=above:$v_1$] {};
        \draw[fill=black] (v3) circle(\circ)  node[label=below:$v_3$] {};
        \draw[fill=black] (v2) circle(\circ);
        \draw[fill=black] (v4) circle(\circ) node[label=right:$v_4$] {};
        \draw[fill=black] (v5) circle(\circ)  node[label=left:$x$] {};
        \draw[fill=black] (u) circle(\circ) node[label=right:$u$] {};
        \draw[fill=black] (w) circle(\circ) node[label=right:$w$] {};
    \end{tikzpicture}
    \qquad\qquad\qquad\qquad
    \begin{tikzpicture}[scale=0.2]
        \coordinate (v1) at (0,12);
        \coordinate (v2) at (-6,6);
        \coordinate (v3) at (0,0);
        \coordinate (v4) at (6,6);
        \coordinate (u) at (12,6);
        \coordinate (w) at (8,14);
        \draw[blue,very thick] (v3) -- (u) -- (v1);
        \draw[blue,dashed,very thick] (v1) -- (w) -- (u);
        \draw[very thick] (v3) -- (v4) -- (v1) -- (v2) -- (v3);
        \draw[very thick] (v4) -- (v2);
        \draw[fill=black] (v1) circle(\circ)  node[label=above:$v_1$] {};
        \draw[fill=black] (v2) circle(\circ)  node[label=left:$x$] {};
        \draw[fill=black] (v3) circle(\circ)  node[label=below:$v_3$] {};
        \draw[fill=black] (v4) circle(\circ)  node[label=right:$v_4$] {};
        \draw[fill=black] (u) circle(\circ)  node[label=right:$u$] {};
        \draw[fill=black] (w) circle(\circ)  node[label=right:$w$] {};
    \end{tikzpicture}
    \caption{Proposition~\ref{prop:faces}\ref{it:faces:yes4face}: The blocks defined by blue edges must be trivial.}
    \label{fig:4face}
\end{figure}
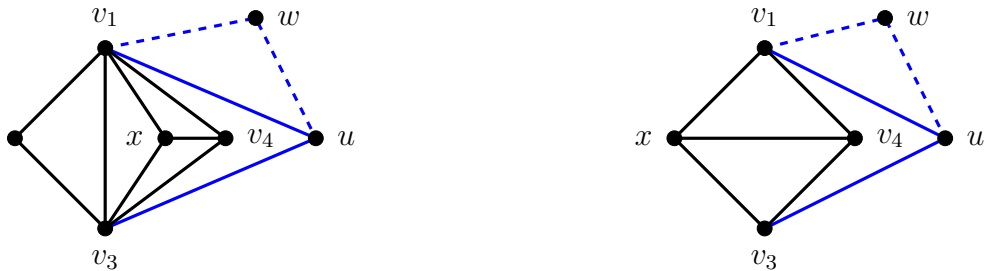

To show the existence of such a partition we need the following lemmas.
\begin{lemma}\label{lem:easyblock}
    Let $G$ be a $2$-connected, $C_6$-free plane graph on $n$ $(n\geq 6)$ vertices with $\delta(G)\geq 3$. If $B$ is a triangular-block in $G$ such that $B\notin \{B_{5,d}, B_{4,b}\}$, then $7 f(B) + 2 n(B) - 5 e(B) \leq 0$.
\end{lemma}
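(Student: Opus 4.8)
The plan is to verify the inequality $7f(B) + 2n(B) - 5e(B) \leq 0$ by a direct case analysis over the finitely many block types excluded from the hypothesis, namely $B \in \{B_2, B_3, B_{4,a}, B_{5,a}, B_{5,b}, B_{5,c}\}$. For each type I would tabulate three quantities: the number of edges $e(B)$, which is immediate from the figures; the vertex-contribution $n(B) = \sum_{v \in V(B)} n_B(v)$; and the face-contribution $f(B)$. The term $5e(B)$ is constant for each block and large, so the whole strategy is to show that $7f(B) + 2n(B)$ cannot exceed it.

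The vertex contribution $n(B)$ is bounded above by $v(B)$, the total number of vertices, with equality exactly when $B$ shares no junction vertices with other blocks. Since making $n(B)$ as large as possible only hurts us, I would first bound everything under the worst case $n(B) = v(B)$, and only if that is insufficient fall back on the fact that non-junction interior vertices (the apex-type vertices in $B_{4,a}, B_{5,a}$, etc.) contribute their full weight while junction vertices reduce $n(B)$. The face contribution $f(B) = (\#\text{ interior faces}) + \sum_{e \in Ext(B)} f_F(e)$ is the delicate term. The count of interior faces is fixed per block type (for instance a $K_4$-type $B_{4,a}$ has three interior triangular faces, a $K_5^-$-type $B_{5,a}$ has four). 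For the exterior contribution I would invoke Proposition~\ref{prop:faces}: parts \ref{it:faces:no5face} and \ref{it:faces:no4face} rule out exterior faces of length $5$, and for the listed blocks also length $4$, so every exterior face has length either $3$ (impossible, as exterior edges lie on non-triangular faces) or at least $6$. Hence each $f_F(e) \leq 1/6$, and summing over the exterior edges gives a clean upper bound on $\sum_{e \in Ext(B)} f_F(e)$.

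Concretely, I would run the arithmetic block by block. For the trivial block $B_2$ we have $e(B)=1$, $n(B)\le 2$, and (in a $2$-connected graph) two exterior faces each of length at least some minimum, giving $f(B) \le 2 \cdot \tfrac{1}{6}$ at worst after accounting for the $K_5^-$-reduction convention in Definition~\ref{bvvc}; plugging in should give a negative value comfortably. For the richer blocks $B_{5,a}, B_{5,b}, B_{5,c}, B_{4,a}$, the exterior faces are forced to have length at least $6$ by Proposition~\ref{prop:faces}\ref{it:faces:no4face}, so each exterior edge contributes at most $1/6$; with their fixed interior-face counts and $v(B) \in \{4,5\}$ the inequality $7f(B) + 2n(B) \le 5e(B)$ should close. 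I would organize this as a short table or a sequence of displayed computations, one per block type, each verifying the target inequality numerically.

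The main obstacle is handling the $K_5^-$-reduction built into Definition~\ref{bvvc} correctly for $B_{5,a}$, since that is precisely the $B_{K_5^-}$ whose exterior boundary can be shortened by the $v_{1,i}v_{2,i}v_{3,i} \mapsto v_{1,i}v_{3,i}$ operation: the two consecutive exterior edges of a $B_{5,a}$ jointly contribute only $1/|C_F'|$ rather than $2/|C_F'|$, so I must use the reduced length $|C_F'|$ and the pooled contribution $f_F(e_1)+f_F(e_2)$ consistently, rather than naively bounding each exterior edge by $1/6$. Getting the bookkeeping of which edges are interior versus exterior right for each figure, and making sure I use the correct minimum exterior-face length (at least $6$, via the proposition) together with the pooling rule, is where the care is needed; once those conventions are pinned down, each case is a one-line numerical check.
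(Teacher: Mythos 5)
Your overall strategy (block-by-block arithmetic, junction-vertex discounting, exterior-face bounds via Proposition~\ref{prop:faces}) is the same as the paper's, but it contains a fatal numerical flaw: you bound exterior face lengths below by $6$ and hence each exterior edge's contribution by $1/6$. The correct bound is $1/7$: since $G$ is $2$-connected, every face boundary is a cycle, so a face of length $6$ would itself be a $C_6$; hence faces of $G$ have length $3$, $4$, $5$, or at least $7$ (this is precisely why the coefficient $7$ appears in $7f+2n-5e$). With $1/6$ your one-line checks do not close for most of the blocks you must handle: for $B_{5,a}$ with three junction vertices, $7\left(5+\tfrac{3}{6}\right)+2\left(2+\tfrac{3}{2}\right)-5\cdot 9 = \tfrac{1}{2}>0$; for $B_{4,a}$, $7\left(3+\tfrac{3}{6}\right)+2\left(2+\tfrac{2}{2}\right)-30=\tfrac{1}{2}>0$; for $B_{5,b}$, $7\left(4+\tfrac{4}{6}\right)+2\left(3+\tfrac{2}{2}\right)-40=\tfrac{2}{3}>0$. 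Note also that your primary plan of taking $n(B)=v(B)$ fails for every nontrivial block even with the sharper $1/7$ bound (e.g.\ for $B_{5,a}$: $7\left(5+\tfrac{3}{7}\right)+2\cdot 5-45=3>0$); the discount coming from at least two junction vertices, each counted with weight at most $1/2$ --- guaranteed because $n\geq 6$ exceeds the block order and $2$-connectivity forbids a cut vertex --- is essential to the arithmetic, not a fallback.

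The second gap is $B_3$ (and, less severely, $B_2$). Proposition~\ref{prop:faces}\ref{it:faces:no4face} does not apply to $B_3$, so exterior $4$-faces are possible there, and your blanket claim that the nontrivial blocks' exterior faces are forced to be long silently drops this case. The paper has to do real work here: when each exterior vertex of the $B_3$ lies in exactly two triangular-blocks, it shows by a structural argument (examining the block adjacent at a shared vertex and exhibiting a $C_6$ in every configuration) that at most one of the three exterior faces can be a $4$-face; this is needed, since three $4$-faces would give $7\left(1+\tfrac{3}{4}\right)+2\cdot\tfrac{3}{2}-15=\tfrac{1}{4}>0$. Similarly, $B_2$ requires a short case analysis on how many blocks contain each endvertex, using the facts that a neighbor of a $B_2$ cannot be trivial when the shared endvertex lies in only two blocks and that two $4$-faces cannot be adjacent (Proposition~\ref{prop:faces}\ref{it:faces:two4faces}); a $5$-face is genuinely possible in the subcase where both endvertices lie in three or more blocks. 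You do correctly flag the pooling rule of Definition~\ref{bvvc} for $B_{5,a}$ --- the paper uses it exactly as you describe --- but without the two corrections above the computation does not go through.
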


\begin{proof}
We separate the proof into several cases.

\subsubsection*{Case 1: $B$ is $B_{5,a}$.}
    Let $v_1$, $v_2$ and $v_3$ be the exterior vertices of $K^{-}_5$. At least two of them must be junction vertices, otherwise $G$ contains a cut vertex. We consider $2$ possibilities to justify this case.

    \begin{enumerate}[label=(\alph*)]
        \item Let $B$ be $B_{5,a}$ with $3$ junction vertices (see Figure \ref{fig:B5a}(a)). By Proposition~\ref{prop:faces}, every exterior edge in $B$ is contained in an exterior face with length at least $7$. Thus, $f(B) = (\#~\mbox{interior faces of $B$}) + \sum\limits_{e\in Ext(B)} f_F(e) \leq 5 + 3/7$. Moreover, every junction vertex is contained in at least $2$ triangular-blocks, so we have $n(B)\leq 2 + 3/2$. With $e(B)=9$, we obtain $7f(B)+2n(B)-5e(B)\leq 0$. \label{case1a}

        \item Let $B$ be $B_{5,a}$ with $2$ junction vertices, say $v_2$ and $v_3$ (see Figure \ref{fig:B5a}(b)). Let $F$ and $F_1$ are exterior faces of the exterior edge $v_2v_3$ and exterior path $v_2v_1v_3$ of the triangular-block respectively. Notice that $v_1v_2$ and $v_2v_3$ are the adjacent exterior edges in the same face $F_1$, hence $|C(F_1)|\geq 8$. By Definition \ref{bvvc}, we have $f_{F_1}(v_1v_2)+f_{F_1}(v_1v_3) \leq 1/7$. Because there can be no $C_6$, one can see that regardless of the configuration of the $B_{K_5^-}$, it is the case that $f_F(v_2v_3) \leq 1/7$. Thus, $f(B)\leq 5 + 2/7$. Moreover, since $v_1$ and $v_3$ are contained in at least $2$ triangular-blocks, we have $n(B) \leq 3 + 2/2$. With $e(B)=9$, we obtain $7f(B)+2n(B)-5e(B)\leq 0$. \label{case1b}

        \begin{figure}[ht]
            \centering
            \begin{tikzpicture}[scale=0.2]
                \clip (-7,-11) rectangle (7,13); 
                \coordinate (v1) at (0,8);
                \coordinate (v2) at (-4,0);
                \coordinate (v3) at (4,0);
                \draw[red,very thick,dashed] (v2) to[out=180,in=120,distance=200pt] (v1);
                \draw[red,very thick,dashed] (v3) to[out=0,in=60,distance=200pt] (v1);
                \draw[red,very thick,dashed] (v2) to[out=225,in=315,distance=400pt] (v3);
                \draw[very thick] (v2) -- (v1) -- (v3) -- (v2);
                \draw[fill=black] (v1) circle(\circ)  node[label=above:$v_1$] {};
                \draw[fill=black] (v2) circle(\circ)  node[label=below:$v_2$] {};
                \draw[fill=black] (v3) circle(\circ)  node[label=below:$v_3$] {};
                \node at (0,3) {$K_5^-$};
                \node at (0,-10){$(a)$};
            \end{tikzpicture}
            \qquad\qquad\qquad\qquad
            \begin{tikzpicture}[scale=0.2]
                \clip (-7, -11) rectangle (7, 13); 
                \coordinate (v1) at (0,8);
                \coordinate (v2) at (-4,0);
                \coordinate (v3) at (4,0);
                \draw[red,very thick,dashed] (v2) to[out=120,in=60,distance=550pt] (v3);
                \draw[red,very thick,dashed] (v2) to[out=225,in=315,distance=400pt] (v3);
                \draw[very thick] (v2) -- (v1) -- (v3) -- (v2);
                \draw[fill=black] (v1) circle(\circ)  node[label=above:$v_1$] {};
                \draw[fill=black] (v2) circle(\circ)  node[label=below:$v_2$] {};
                \draw[fill=black] (v3) circle(\circ)  node[label=below:$v_3$] {};
                \node at (4,8) {$F_1$};
                \node at (0,-3) {$F$};
                \node at (0,3) {$K_5^-$};
                \node at (0,-10){$(b)$};
            \end{tikzpicture}
            \caption{A $B_{5,a}$ triangular-block with $3$ and $2$ junction vertices, respectively.}
            \label{fig:B5a}
        \end{figure}
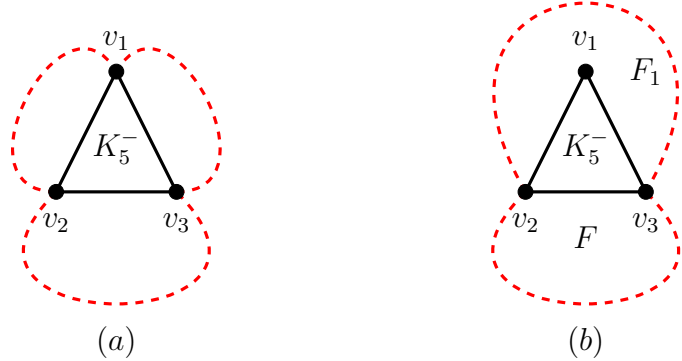
    \end{enumerate}

\subsubsection*{Case $2$: $B$ is in $\{B_{4,a},B_{5,b},B_{5,c}\}$.}

    \begin{enumerate}[label=(\alph*)]
        \item Let $B$ be a $B_{4,a}$. By Proposition~\ref{prop:faces}, each face incident to this triangular-block has length at least $7$. So, $f(B) \leq 3 + 3/7$. Because there is no cut-vertex, this triangular-block must have at least two junction vertices, hence $n(B) \leq 2 + 2/2$. With $e(B) = 6$, we obtain $7 f(B) + 2 n(B) - 5 e(B) \leq 0$. \label{case2a}

        \item Let $B$ be a $B_{5,b}$. There are $4$ faces inside the triangular-block and each face incident to this triangular-block has length at least $7$. So, $f(B) \leq 4 + 4/7$. Because there is no cut-vertex, this triangular-block must have at least two junction vertices, hence $n(B) \leq 3 + 2/2$. With $e(B) = 8$, we obtain $7 f(B) + 2 n(B) - 5 e(B) \leq 0$, as seen in Table~\ref{tab:5block}. \label{case2b}

        \item Let $B$ be a $B_{5,c}$. Similarly, $f(B) \leq 3 + 5/7$ and because there are at least two junction vertices, $n(B) \leq 3 + 2/2$. With $e(B) = 7$, we obtain $7 f(B) + 2 n(B) - 5 e(B) \leq -1$. \label{case2c}
    \end{enumerate}

\subsubsection*{Case $3$: $B$ is $B_{3}$.}
    Let $v_1$, $v_2$ and $v_3$ be the exterior vertices of triangular-block $B$. Each of these three must be junction vertices since there is no degree $2$ vertex in $G$, which implies that each is contained in at least $2$ triangular-blocks. We consider two possibilities:

    \begin{enumerate}[label=(\alph*)]
        \item  Let the three exterior vertices be contained in exactly $2$ triangular-blocks. By Proposition~\ref{prop:faces}\ref{it:faces:no5face}, the length of each exterior face is either $4$ or at least $7$. We want to show that at most one exterior face has length $4$.

        If not, then let $x_1$ be a vertex that is in two such faces. Consider the triangular-block incident to $B$ at $x_1$, call it $B'$. By Proposition~\ref{prop:faces}, $B'$ is not in $\{B_{5,a},B_{5,b},B_{5,c},B_{4,a}\}$.

        If $B'$ is in $\{B_{5,d},B_{4,b},B_3\}$, then the triangular-block has vertices $\ell_2,\ell_3$, each adjacent to $x_1$ and the length-$4$ faces consist of $\{v_1,\ell_2,m_2,v_2\}$ and $\{v_1,\ell_3,m_3,v_3\}$. Either $\ell_2\sim\ell_3$ (in which case $\ell_2m_2v_2v_3m_3\ell_3\ell_2$ is a $6$-cycle, see Figure~\ref{fig:B3}(a)) or there is a $\ell'$ distinct from $v_1$ that is adjacent to both $\ell_2$ and $\ell_3$ (in which case $\ell'\ell_2m_2v_2v_1\ell_3\ell_2$ is a $6$-cycle, see Figure~\ref{fig:B3}(b)).

        If $B'$ is $B_2$, then the trivial triangular-block is $\{v_1,\ell\}$, in which case $\{\ell,m_2,v_2,v_1,v_3,m_3\}$ is a $C_6$, see Figure~\ref{fig:B3}(c). Thus, we may conclude that if each of the three exterior vertices are in exactly $2$ triangular-blocks, then $f(B) \leq 1 + 2/7 + 1/4$ and $n(B) \leq 3/2$. With $e(B) = 3$, we obtain $7 f(B) + 2 n(B) - 5 e(B) \leq -5/4$.\label{case3a}

        \begin{figure}[ht]
            \centering
            \begin{tikzpicture}[scale=0.2]
                \coordinate (v1) at (0,8);
                \coordinate (v2) at (-4,0);
                \coordinate (v3) at (4,0);
                \coordinate (l2) at (-4,12);
                \coordinate (l3) at (4,12);
                \coordinate (m2) at (-8,4);
                \coordinate (m3) at (8,4);
                \draw[very thick] (v2) -- (v1) -- (v3) -- (v2);
                \draw[red,very thick,fill=red!15] (v1) -- (l2) -- (l3) -- (v1);
                \draw[dashed,very thick] (v2) -- (m2) -- (l2);
                \draw[dashed,very thick] (v3) -- (m3) -- (l3);
                \draw[red,very thick] (l2) -- (l3);
                \draw[fill=black] (v1) circle(\circ)  node[label=right:$v_1$] {};
                \draw[fill=black] (v2) circle(\circ)  node[label=below:$v_2$] {};
                \draw[fill=black] (v3) circle(\circ)  node[label=below:$v_3$] {};
                \draw[fill=red] (l2) circle(\circ)  node[label=left:$\ell_2$] {};
                \draw[fill=red] (l3) circle(\circ)  node[label=right:$\ell_3$] {};
                \draw[fill=black] (m2) circle(\circ)  node[label=right:$m_2$] {};
                \draw[fill=black] (m3) circle(\circ)  node[label=left:$m_3$] {};
                \node at (0,3) {$B$};
                \node[red] at (0,10.5) {$B'$};
                \node at (0,16) {};
                \node at (0,-6) {$(a)$};
            \end{tikzpicture}
            \qquad\qquad
            \begin{tikzpicture}[scale=0.2]
                \coordinate (v1) at (0,8);
                \coordinate (v2) at (-4,0);
                \coordinate (v3) at (4,0);
                \coordinate (l2) at (-4,12);
                \coordinate (l3) at (4,12);
                \coordinate (lp) at (0,16);
                \coordinate (m2) at (-8,4);
                \coordinate (m3) at (8,4);
                \draw[very thick] (v2) -- (v1) -- (v3) -- (v2);
                \draw[red,very thick,fill=red!15] (v1) -- (l2) -- (lp) -- (l3) -- (v1);
                \draw[dashed,very thick] (v2) -- (m2) -- (l2);
                \draw[dashed,very thick] (v3) -- (m3) -- (l3);
                \draw[fill=black] (v1) circle(\circ)  node[label=right:$v_1$] {};
                \draw[fill=black] (v2) circle(\circ)  node[label=below:$v_2$] {};
                \draw[fill=black] (v3) circle(\circ)  node[label=below:$v_3$] {};
                \draw[fill=red] (l2) circle(\circ)  node[label=left:$\ell_2$] {};
                \draw[fill=red] (l3) circle(\circ)  node[label=right:$\ell_3$] {};
                \draw[fill=red] (lp) circle(\circ)  node[label=right:$\ell'$] {};
                \draw[fill=black] (m2) circle(\circ)  node[label=right:$m_2$] {};
                \draw[fill=black] (m3) circle(\circ)  node[label=left:$m_3$] {};
                \node at (0,3) {$B$};
                \node[red] at (0,12) {$B'$};
                \node at (0,16) {};
                \node at (0,-6) {$(a)$};
            \end{tikzpicture}
            \qquad\qquad
            \begin{tikzpicture}[scale=0.2]
                \coordinate (v1) at (0,8);
                \coordinate (v2) at (-4,0);
                \coordinate (v3) at (4,0);
                \coordinate (l) at (0,15);
                \coordinate (m2) at (-8,6);
                \coordinate (m3) at (8,6);
                \draw[very thick] (v2) -- (v1) -- (v3) -- (v2);
                \draw[red,very thick] (v1) -- (l);
                \draw[dashed,very thick] (v2) -- (m2) -- (l);
                \draw[dashed,very thick] (v3) -- (m3) -- (l);
                \draw[fill=black] (v1) circle(\circ)  node[label=right:$v_1$] {};
                \draw[fill=black] (v2) circle(\circ)  node[label=below:$v_2$] {};
                \draw[fill=black] (v3) circle(\circ)  node[label=below:$v_3$] {};
                \draw[fill=red] (l) circle(\circ)  node[label=right:$\ell$] {};
                \draw[fill=black] (m2) circle(\circ)  node[label=right:$m_2$] {};
                \draw[fill=black] (m3) circle(\circ)  node[label=left:$m_3$] {};
                \node at (0,3) {$B$};
                \node[red] at (-1.5,10.5) {$B'$};
                \node at (0,16) {};
                \node at (0,-6) {$(a)$};
            \end{tikzpicture}
            \caption{A $B_3$ triangular-block, $B$ and the various cases of what must occur if $B$ is incident to two $4$-faces.}
            \label{fig:B3}
        \end{figure}
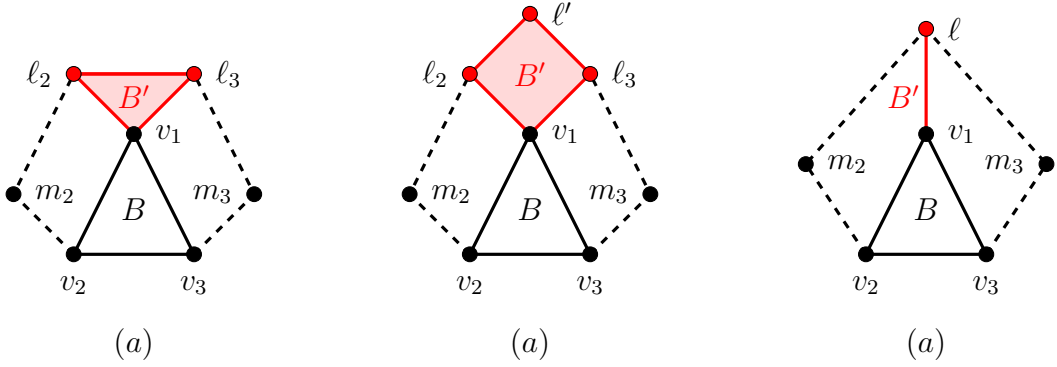

        \item  Let at least one exterior vertex be contained in at least $3$ triangular-blocks and the others be contained at least $2$ triangular-blocks. In this case, we have $f(B) \leq 1 + 3/4$ and $n(B) \leq 2/2 + 2/3$. With $e(B) = 3$, we obtain $7 f(B) + 2 n(B) - 5 e(B) \leq -1/12$.\label{case3b}
    \end{enumerate}

\subsubsection*{Case $4$: $B$ is $B_{2}$.}
    Note that the fact that there is no vertex of degree $2$ gives that if an endvertex is in exactly two triangular-blocks, then the other one cannot be a $B_2$. We consider three possibilities:

    \begin{enumerate}[label=(\alph*)]
        \item  Let each endvertex be contained in exactly $2$ triangular-blocks. Since neither of the triangular-blocks incident to $B$ can be trivial, they cannot be incident to a face of length $5$ by Proposition~\ref{prop:faces}\ref{it:faces:no5face}. Thus, $B$ cannot be incident to a face of length $5$. Moreover, the two faces incident to $B$ cannot both be of length $4$, again by Proposition~\ref{prop:faces}\ref{it:faces:two4faces}. Hence, $f(B) \leq 1/4 + 1/7$. Clearly $n(B) \leq 2/2$ and with $e(B) = 1$, we obtain $7 f(B) + 2 n(B) - 5 e(B) \leq -1/4$.\label{case4a}

        \item  Let one endvertex be contained in exactly $2$ triangular-blocks and the other endvertex be contained in at least $3$ triangular-blocks. This is similar to case \ref{case4a} in that neither face can have length $5$ and they cannot both have length $4$. The only difference is that $n(B) \leq 1/2 + 1/3$ and so $7 f(B) + 2 n(B) - 5 e(B) \leq -7/12$.\label{case4b}

        \item  Let each endvertex be contained in at least $3$ triangular-blocks. The two faces cannot both be of length $4$ by Proposition~\ref{prop:faces}\ref{it:faces:two4faces}. Hence, $f(B) \leq 1/4 + 1/5$ and $n(B) \leq 2/3$. With $e(B) = 1$, we obtain $7 f(B) + 2 n(B) - 5 e(B) \leq -31/60$.\label{case4c}
    \end{enumerate}
\end{proof}

\begin{lemma}\label{lem:B5d}
    Let $G$ be a $2$-connected, $C_6$-free plane graph on $n$ $(n\geq 6)$ vertices with $\delta(G)\geq 3$. If $B$ is $B_{5,d}$, then $7 f(B) + 2 n(B) - 5 e(B) \leq 1/2$. Moreover, $7 f(B) + 2 n(B) - 5 e(B) \leq 0$ unless $B$ shares a $2$-path with a $4$-face.
\end{lemma}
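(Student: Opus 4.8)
The plan is to evaluate the three quantities $e(B)$, $n(B)$, $f(B)$ for $B=B_{5,d}$ and then split according to whether $B$ is incident to a $4$-face. First I would record the fixed data of the block. Write $v_1,v_2,v_3,v_4$ for the exterior vertices on the bounding $4$-cycle (with diagonal $v_1v_3$) and $v_5$ for the single interior vertex, which sits inside the triangle $v_1v_3v_4$ and is joined to $v_1,v_3,v_4$. Then $e(B)=8$, there are exactly $4$ interior triangular faces, and the four exterior edges are $v_1v_2,v_2v_3,v_3v_4,v_4v_1$. Since all three edges at $v_5$ are interior edges of $B$, the vertex $v_5$ lies in no other block and contributes $n_B(v_5)=1$. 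The opposite corner $v_2$ has degree $2$ inside $B$, so $\delta(G)\ge 3$ forces it to carry an edge outside $B$ and hence to be a junction vertex; moreover $2$-connectivity (together with $n\ge 6$, so that $B\neq G$) forces a second junction vertex, since otherwise the unique one would be a cut vertex. As interior vertices are never junctions, at least two \emph{exterior} vertices are junctions, giving the crude bound $n(B)\le \tfrac12+\tfrac12+1+1+1=4$.

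For the face contribution I would invoke Proposition~\ref{prop:faces}: part~\ref{it:faces:no5face} rules out exterior faces of length $5$, $C_6$-freeness rules out faces of length $6$, and part~\ref{it:faces:yes4face} says the only exterior $4$-face incident to $B_{5,d}$ must share the $2$-path $v_1v_4v_3$. This produces the dichotomy in the lemma. If no $4$-face is incident, every exterior face has length at least $7$, so each exterior edge contributes at most $1/7$ and $f(B)\le 4+4/7$; with $n(B)\le 4$ and $e(B)=8$ this yields $7f(B)+2n(B)-5e(B)\le 7(4+\tfrac47)+8-40=0$, which is exactly the ``moreover'' conclusion.

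In the remaining case a $4$-face $F$ sharing the $2$-path $v_1v_4v_3$ is present, say $F=v_1v_4v_3u$. By Proposition~\ref{prop:faces}\ref{it:faces:yes4face} its other two edges $v_1u,uv_3$ lie in trivial blocks, so both $v_1$ and $v_3$ now lie in at least two blocks and are junction vertices; together with $v_2$ this gives three junction vertices and sharpens the vertex bound to $n(B)\le \tfrac12+\tfrac12+\tfrac12+1+1=\tfrac72$. Since $F$ is an ordinary $4$-face (not incident to a $K_5^-$ in the special configuration of Definition~\ref{bvvc}), each of $v_1v_4,v_4v_3$ contributes $1/4$, while the remaining exterior edges $v_1v_2,v_2v_3$ lie on faces of length at least $7$ and contribute at most $1/7$ each; hence $f(B)\le 4+\tfrac12+\tfrac27$. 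Plugging in gives $7f(B)+2n(B)-5e(B)\le 7\!\left(4+\tfrac12+\tfrac27\right)+7-40=\tfrac12$.

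The step I expect to be most delicate is the bookkeeping of junction vertices, since the entire improvement from the naive value $3/2$ down to $1/2$ comes precisely from noticing that the presence of the $4$-face promotes two further exterior vertices ($v_1$ and $v_3$) to junction status, dropping $n(B)$ from $4$ to $7/2$. Care is also needed to be sure that no single exterior edge (as opposed to the full $2$-path $v_1v_4v_3$) can border a $4$-face, and that the $v_2$-side path $v_1v_2v_3$ is excluded by the $\delta(G)\ge 3$ condition; these facts are exactly what Proposition~\ref{prop:faces} packages, so the remaining work is to apply them correctly and verify the two arithmetic inequalities.
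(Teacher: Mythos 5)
Your proof is correct and arrives at exactly the paper's bounds, but you organize the case analysis in the reverse direction from the paper. The paper splits into three cases according to which exterior vertices are junction vertices --- (a) $v_4$ a junction; (b) $v_4$ not a junction and exactly one of $v_1,v_3$ a junction; (c) $v_4$ not a junction and both $v_1,v_3$ junctions --- and then determines the possible exterior face lengths in each case; the $4$-face dichotomy appears only inside case (c), and cases (a), (b) rest on the (only implicitly justified) claim that their junction hypotheses preclude a $4$-face altogether. You instead split directly on whether a $4$-face is incident to $B$, and in the affirmative case you \emph{recover} the junction information from the face: Proposition~\ref{prop:faces}\ref{it:faces:yes4face} pins the face to $v_1v_4v_3u$ with $v_1u,uv_3$ in other (trivial) blocks, so $v_1,v_3$ join $v_2$ as junction vertices, giving $n(B)\le 7/2$ and $f(B)\le 4+2/4+2/7$, hence the value $1/2$; in the negative case the crude count $n(B)\le 4$ (two junctions, from $\delta(G)\ge 3$ at $v_2$ plus $2$-connectedness and $v(B)=5<n$) already yields $0$. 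This inversion is a genuine simplification: it eliminates the paper's cases (a) and (b) entirely, together with the unstated geometric argument that an extra edge at $v_4$ would have to lie inside the $4$-face $v_1v_4v_3u$. What the paper's finer split buys is only sharper intermediate constants (such as $-1$ in case (c) with no $4$-face, and a footnoted $-2/3$ in case (b)), none of which are used later; Lemma~\ref{lem:partition} needs precisely the two inequalities you establish. Two details you handle correctly and that must not be skipped: the edges $v_1v_2,v_2v_3$ lie on faces of length at least $7$ (a $4$-face through them is excluded by Proposition~\ref{prop:faces}\ref{it:faces:yes4face}, length $5$ by Proposition~\ref{prop:faces}\ref{it:faces:no5face}, and length $6$ by $C_6$-freeness), and the two edges on the $4$-face contribute exactly $1/4$ each because the $K_5^-$ adjustment in Definition~\ref{bvvc} does not apply here.
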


\begin{figure}[ht]
    \centering
    \begin{tikzpicture}[scale=0.2]
        \coordinate (v1) at (0,12);
        \coordinate (v4) at (8,6);
        \coordinate (v5) at (4,6);
        \coordinate (v3) at (0,0);
        \coordinate (v2) at (-6,6);
        \draw[very thick] (v1) -- (v4) -- (v3) -- (v2) -- (v1);
        \draw[very thick] (v1) -- (v3);
        \draw[very thick] (v1) -- (v5) -- (v3);
        \draw[very thick] (v4) -- (v5);
        \draw[fill=black] (v1) circle(\circ)  node[label=above:$v_1$] {};
        \draw[fill=black] (v4) circle(\circ) node[label=right:$v_4$] {};
        \draw[fill=black] (v5) circle(\circ)  node[label=left:$v_5$] {};
        \draw[fill=black] (v3) circle(\circ)  node[label=below:$v_3$] {};
        \draw[fill=black] (v2) circle(\circ)  node[label=left:$v_2$] {};
        \node at (0,-6) {$(a)$};
    \end{tikzpicture}
    \qquad\qquad\qquad\qquad
    \begin{tikzpicture}[scale=0.2]
        \coordinate (v1) at (0,12);
        \coordinate (v4) at (8,6);
        \coordinate (v5) at (4,6);
        \coordinate (v3) at (0,0);
        \coordinate (v2) at (-6,6);
        \coordinate (u) at (14,6);
        \draw[blue,very thick] (v1) -- (u) -- (v3);
        \draw[very thick] (v1) -- (v4) -- (v3) -- (v2) -- (v1);
        \draw[very thick] (v1) -- (v3);
        \draw[very thick] (v1) -- (v5) -- (v3);
        \draw[very thick] (v4) -- (v5);
        \draw[fill=black] (v1) circle(\circ)  node[label=above:$v_1$] {};
        \draw[fill=black] (v4) circle(\circ) node[label=right:$v_4$] {};
        \draw[fill=black] (v5) circle(\circ)  node[label=left:$v_5$] {};
        \draw[fill=black] (v3) circle(\circ)  node[label=below:$v_3$] {};
        \draw[fill=black] (v2) circle(\circ)  node[label=left:$v_2$] {};
        \draw[fill=black] (u) circle(\circ);
        \node at (0,-6) {$(b)$};
    \end{tikzpicture}
    \caption{A $B_{5,d}$ triangular-block and how a $4$-face must be incident to it.}
    \label{fig:B5d}
\end{figure}
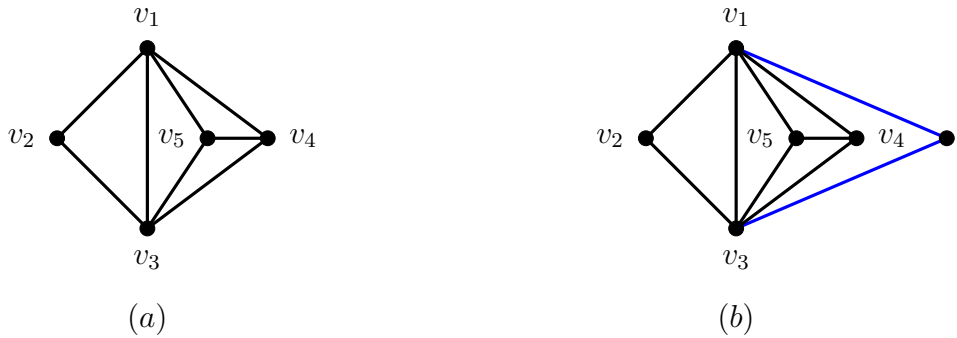

\begin{proof}
Let $B$ be $B_{5,d}$ with vertices $v_1$, $v_2$, $v_3$, $v_4$, and $v_5$, as shown in Figure~\ref{fig:B5d}(a). By Proposition~\ref{prop:faces}\ref{it:faces:no5face}, no exterior face of $B$ can have length $5$. By Proposition~\ref{prop:faces}\ref{it:faces:yes4face}, if there is an exterior face of $B$ that has length $4$, this $4$-face must contain the path $v_1v_4v_3$.

Moreover, since there is no vertex of degree $2$, $v_2$ is a junction vertex. Because $G$ has no cut-vertex, there is at least one other junction vertex. We may consider the following cases:

\begin{enumerate}[label=(\alph*)]
    \item Let $v_4$ be a junction vertex. This prevents an exterior face of length $4$. Thus, each exterior face has length at least $7$. Hence, $f(B) \leq 4 + 4/7$ and $n(B) \leq 3 + 2/2$. With $e(B) = 8$, we obtain $7 f(B) + 2 n(B) - 5 e(B) \leq 0$. \label{case5a}

    \item Let $v_4$ fail to be a junction vertex and exactly one of $v_1,v_3$ be a junction vertex. Without loss of generality let it be $v_3$. In this case, again, each exterior face has length\footnote{In fact, it can be shown that the length of the exterior face containing the path $v_2v_1v_4v_3$ is at least $9$. This yields $f(B) \leq 4 + 1/7 + 3/9$ and $7 f(B) + 2 n(B) - 5 e(B) \leq -2/3$. However, this precision is unnecessary.}  at least $7$. Again, $f(B) \leq 4 + 4/7$ and $n(B) \leq 3 + 2/2$. With $e(B) = 8$, we obtain $7 f(B) + 2 n(B) - 5 e(B) \leq 0$. \label{case5b}

    \item Let $v_4$ fail to be a junction vertex and both $v_1$ and $v_3$ be junction vertices. Here either the exterior path $v_1v_4v_3$ is part of an exterior face of length at least $4$ or each edge must be in a face of length at least $7$.
    If the exterior face is of length at least $7$, then $f(B) \leq 4 + 4/7$, otherwise $f(B) \leq 4 + 2/4 + 2/7$. In both cases, $n(B) \leq 2 + 3/2$ and $e(B) = 8$. Hence we obtain $7 f(B) + 2 n(B) - 5 e(B) \leq -1$ in the first instance and $7 f(B) + 2 n(B) - 5 e(B) \leq 1/2$ in the case where $B$ is incident to a $4$-face.\label{case5c}
\end{enumerate}
\end{proof}

\begin{lemma}\label{lem:B4b}
    Let $G$ be a $2$-connected, $C_6$-free plane graph on $n$ $(n\geq 6)$ vertices with $\delta(G)\geq 3$. If $B$ is $B_{4,b}$, then $7 f(B) + 2 n(B) - 5 e(B) \leq 4/3$. Moreover, $7 f(B) + 2 n(B) - 5 e(B) \leq 1/6$ if $B$ shares a $2$-path with exactly one $4$-face and $7 f(B) + 2 n(B) - 5 e(B) \leq 0$ if $B$ fails to share a $2$-path with any $4$-face.
\end{lemma}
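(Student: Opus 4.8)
The plan is to mirror the structure of the proof of Lemma~\ref{lem:B5d}, organizing everything by the number $t\in\{0,1,2\}$ of $4$-faces incident to $B$. Fix the labelling of $B=B_{4,b}$ so that its bounding $4$-cycle is $v_1v_2v_3v_4$ with the single interior (diagonal) edge $v_2v_4$; thus $e(B)=5$, $B$ has two interior triangular faces, and $v_1,v_3$ are the two vertices of degree $2$ in $B$. Since $\delta(G)\geq 3$, both $v_1$ and $v_3$ are forced to be junction vertices. By Proposition~\ref{prop:faces}\ref{it:faces:no5face} no exterior face has length $5$, and by Proposition~\ref{prop:faces}\ref{it:faces:yes4face} every incident $4$-face shares with $B$ one of the two admissible $2$-paths $v_1v_2v_3$ or $v_1v_4v_3$, its remaining two edges lying in trivial triangular-blocks. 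As these are the only two admissible $2$-paths, $t\leq 2$. Every exterior edge that is not on a $4$-face lies on a face of length at least $7$ (lengths $6$ and, by the proposition, $5$ being excluded) and hence contributes at most $\tfrac17$ to $f(B)$, while each of the two edges of a $4$-face contributes $\tfrac14$ by Definition~\ref{bvvc}(ii).

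If $t=0$, all four exterior edges sit on faces of length at least $7$, so $f(B)\leq 2+\tfrac47$; with $v_1,v_3$ junction and $v_2,v_4$ possibly non-junction we get $n(B)\leq \tfrac12+\tfrac12+1+1=3$, whence $7f(B)+2n(B)-5e(B)\leq 7(2+\tfrac47)+6-25=-1\leq 0$. If $t=2$, both sides are $4$-faces, say $F_1=v_1v_2v_3w$ and $F_2=v_1v_4v_3w'$; here $w\neq w'$, for otherwise $F_1,F_2$ would share the $2$-path $v_1wv_3$ and Proposition~\ref{prop:faces}\ref{it:faces:two4faces} would force $\deg_G(w)=2$. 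Consequently $v_1$ lies in $B$ and in the two distinct trivial blocks $\{v_1,w\},\{v_1,w'\}$, and likewise for $v_3$, so each of $v_1,v_3$ is in at least $3$ blocks and $n(B)\leq \tfrac13+\tfrac13+1+1=\tfrac83$. With $f(B)\leq 2+4\cdot\tfrac14=3$ this gives $7f(B)+2n(B)-5e(B)\leq 21+\tfrac{16}{3}-25=\tfrac43$, which is the claimed overall maximum.

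The crux, and the step I expect to be the main obstacle, is the case $t=1$, since a naive count ($v_1,v_3$ in exactly two blocks each) only yields $\tfrac12$, not the stated $\tfrac16$. The sharpening I would use: with the unique $4$-face $F_1=v_1v_2v_3w$ (so $w\sim v_1,v_3$ and $v_1w,v_3w$ trivial), I claim at least one of $v_1,v_3$ has degree at least $4$. Indeed, if both had degree exactly $3$ with neighbourhoods $\{v_2,v_4,w\}$, then the region on the $v_4$-side would be bounded precisely by $v_1v_4v_3w$, a second $4$-face sharing the $2$-path $v_1wv_3$ with $F_1$, again forcing $\deg_G(w)=2$ by Proposition~\ref{prop:faces}\ref{it:faces:two4faces}, a contradiction. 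The vertex of degree at least $4$ then has a neighbour outside $\{v_2,v_4,w\}$ whose incident edge lies in a block distinct from both $B$ and the trivial block $\{v_i,w\}$, so it sits in at least $3$ blocks. Hence $n(B)\leq \tfrac13+\tfrac12+1+1=\tfrac{17}{6}$, and with $f(B)\leq 2+\tfrac14+\tfrac14+\tfrac17+\tfrac17=2+\tfrac{11}{14}$ I obtain $7f(B)+2n(B)-5e(B)\leq \tfrac{39}{2}+\tfrac{17}{3}-25=\tfrac16$.

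Collecting the three cases proves $7f(B)+2n(B)-5e(B)\leq \tfrac43$ in general, with the value at most $\tfrac16$ exactly when $B$ shares a $2$-path with exactly one $4$-face ($t=1$) and at most $0$ when it shares a $2$-path with no $4$-face ($t=0$). The only genuinely delicate point is the planarity/face-closing argument forcing a vertex of degree at least $4$ in the $t=1$ case; the remaining inequalities are the routine arithmetic recorded above, and the worst case $\tfrac43$ occurs at $t=2$.
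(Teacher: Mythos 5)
Your framework is the same as the paper's (block-by-block bounding of $f(B)$ and $n(B)$ via Proposition~\ref{prop:faces}), and your cases $t=0$ and $t=2$ are correct, with exactly the paper's counts (they are its subcases \ref{case6bi} and \ref{case6biii}). The genuine gap is in $t=1$, the step you yourself call the crux. Your proof that one of $v_1,v_3$ has degree at least $4$ argues that if both had degree $3$, the region on the $v_4$-side of $B$ would be the face $v_1v_4v_3w$. This deduction fails in two ways. First, it tacitly assumes $v_4$ is not a junction vertex: $v_4$ may have neighbours outside $B$, and any such neighbours necessarily embed in the $v_4$-side region, so that region need not be bounded by $v_1v_4v_3w$ and no contradiction arises. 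In that configuration both $v_1$ and $v_3$ genuinely can have degree $3$, so your intermediate claim is not merely unproven but can fail; the lemma still holds there, but only because $v_4$ being a junction vertex gives $n_B(v_4)\le \frac12$, hence $n(B)\le \frac52$, $f(B)\le 2+\frac24+\frac27$, and $7f(B)+2n(B)-5e(B)\le -\frac12$. This is precisely case~\ref{case6a} of the paper's proof, which your reorganization by $t$ absorbs into $t=1$ but which your argument never treats. Second, even when $v_4$ is not a junction vertex, the $v_4$-side region can never literally be a $4$-face: since $\delta(G)\ge 3$, the vertex $w$ has a neighbour other than $v_1,v_3$, and because $F_1$ is a face, that neighbour must lie \emph{inside} the $v_4$-side region. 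So Proposition~\ref{prop:faces}\ref{it:faces:two4faces} cannot be invoked to finish. What the hypothesis ``both $v_1,v_3$ of degree $3$ and $v_4$ non-junction'' actually forces is that every vertex of $B$ has all its edges inside $V(B)\cup\{w\}$, so the rest of $G$ meets $B\cup\{w\}$ only at $w$; hence either $v(G)=5$ or $w$ is a cut-vertex, contradicting $n\ge 6$ and $2$-connectivity. This connectivity argument is exactly how the paper's subcase~\ref{case6bii} justifies that one of $v_1,v_3$ lies in at least three triangular-blocks, and it is what your ``second $4$-face'' step must be replaced by.

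Once those two repairs are made---add the subcase in which $v_4$ is a junction vertex, and replace the appeal to Proposition~\ref{prop:faces}\ref{it:faces:two4faces} by the cut-vertex/$v(G)=5$ argument---your case structure and arithmetic (the bounds $n(B)\le\frac{17}{6}$ and $f(B)\le 2+\frac{11}{14}$, and the resulting values $-1$, $\frac16$, $\frac43$) do deliver the lemma.
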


\begin{figure}[ht]
    \centering
    \begin{tikzpicture}[scale=0.2]
        \coordinate (v1) at (0,12);
        \coordinate (v2) at (-6,6);
        \coordinate (v3) at (0,0);
        \coordinate (v4) at (6,6);
        \draw[very thick] (v3) -- (v4) -- (v1) -- (v2) -- (v3);
        \draw[very thick] (v4) -- (v2);
        \draw[fill=black] (v1) circle(\circ)  node[label=above:$v_1$] {};
        \draw[fill=black] (v2) circle(\circ)  node[label=left:$v_2$] {};
        \draw[fill=black] (v3) circle(\circ)  node[label=below:$v_3$] {};
        \draw[fill=black] (v4) circle(\circ)  node[label=right:$v_4$] {};
        \node at (0,-6) {$(a)$};
    \end{tikzpicture}
    \qquad
    \begin{tikzpicture}[scale=0.2]
        \coordinate (v1) at (0,12);
        \coordinate (v2) at (-6,6);
        \coordinate (v3) at (0,0);
        \coordinate (v4) at (6,6);
        \coordinate (u) at (12,6);
        \draw[blue,very thick] (v3) -- (u) -- (v1);
        \draw[very thick] (v3) -- (v4) -- (v1) -- (v2) -- (v3);
        \draw[very thick] (v4) -- (v2);
        \draw[fill=black] (v1) circle(\circ)  node[label=above:$v_1$] {};
        \draw[fill=black] (v2) circle(\circ)  node[label=left:$v_2$] {};
        \draw[fill=black] (v3) circle(\circ)  node[label=below:$v_3$] {};
        \draw[fill=black] (v4) circle(\circ)  node[label=right:$v_4$] {};
        \draw[fill=black] (u) circle(\circ);
        \node at (0,-6) {$(b)$};
    \end{tikzpicture}
    \qquad
    \begin{tikzpicture}[scale=0.2]
        \coordinate (v1) at (0,12);
        \coordinate (v2) at (-6,6);
        \coordinate (v3) at (0,0);
        \coordinate (v4) at (6,6);
        \coordinate (u1) at (12,6);
        \coordinate (u2) at (-12,6);
        \draw[blue,very thick] (v3) -- (u1) -- (v1);
        \draw[blue,very thick] (v3) -- (u2) -- (v1);
        \draw[very thick] (v3) -- (v4) -- (v1) -- (v2) -- (v3);
        \draw[very thick] (v4) -- (v2);
        \draw[fill=black] (v1) circle(\circ)  node[label=above:$v_1$] {};
        \draw[fill=black] (v2) circle(\circ)  node[label=left:$v_2$] {};
        \draw[fill=black] (v3) circle(\circ)  node[label=below:$v_3$] {};
        \draw[fill=black] (v4) circle(\circ)  node[label=right:$v_4$] {};
        \draw[fill=black] (u1) circle(\circ);
        \draw[fill=black] (u2) circle(\circ);
        \node at (0,-6) {$(c)$};
    \end{tikzpicture}
    \caption{A $B_{4,b}$ triangular-block and how a $4$-face must be incident to it.}
    \label{fig:B4b}
\end{figure}
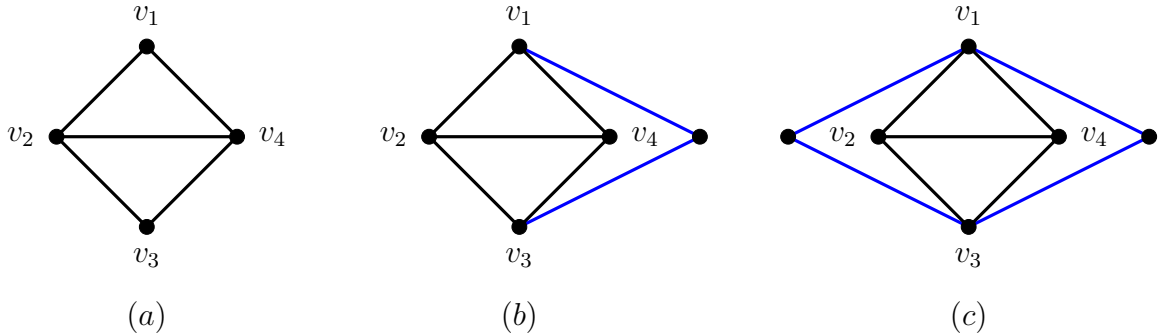

\begin{proof}
Let $B$ be with vertices $v_1$, $v_2$, $v_3$, and $v_4$, as shown in Figure~\ref{fig:B4b}(a). By Proposition~\ref{prop:faces}\ref{it:faces:no5face}, no exterior face of $B$ can have length $5$. If there is an exterior face of $B$ that has length $4$, it is easy to verify that being $C_6$-free and having no vertex of degree $2$ means that the junction vertices must be $v_1$ and $v_3$. We may consider the following cases.

\begin{enumerate}[label=(\alph*)]
    \item Let either $v_2$ or $v_4$ be a junction vertex and, without loss of generality, let it be $v_2$. All the exterior faces have length at least $7$ except for the possibility that the path $v_1v_4v_3$ may form two sides of a $4$-face. Hence, $f(B) \leq 2 + 2/4 + 2/7$ and $n(B) \leq 1 + 3/2$. With $e(B) = 5$, we obtain $7 f(B) + 2 n(B) - 5 e(B) \leq -1/2$. \label{case6a}

    \item Let neither $v_2$ nor $v_4$ be a junction vertex. Because there is no cut-vertex, this requires both $v_1$ and $v_3$ to be junction vertices. Hence, there are two exterior faces: One that shares the exterior path $v_1v_4v_3$ and the other shares the exterior path $v_1v_2v_3$. Each exterior face has length either $4$ or at least $7$. We consider several subcases: \label{case6b}

    \begin{enumerate}[label=(\roman*)]
        \item If both faces are of length at least $7$, then $f(B) \leq 2 + 4/7$, and $n(B) \leq 2 + 2/2$. With $e(B) = 5$, we obtain $7 f(B) + 2 n(B) - 5 e(B)\leq -1$. \label{case6bi}

        \item If only one of the exterior faces is of length $4$, then $f(B) \leq 2 + 2/7 + 2/4$. Moreover, at least one of $v_1$, $v_3$ must be a junction vertex for more than two triangular-blocks, otherwise either $v(G) = 5$ or the vertex incident to two blue edges in Figure~\ref{fig:B4b}(b) is a cut-vertex. Hence, $n(B) \leq 2 + 1/3 + 1/2$ and with $e(B) = 5$, we have $7 f(B) + 2 n(B) - 5 e(B) \leq 1/6$. \label{case6bii}

       \item Both exterior faces are of length $4$. Thus $f(B) \leq 2 + 4/4$. By Proposition~\ref{prop:faces}\ref{it:faces:yes4face}, the blocks represented by the blue edges in Figure~\ref{fig:B4b}(c) are each trivial. Hence $n(B) \leq 2 + 2/3$. With $e(B)=5$, we get $7 f(B) + 2 n(B) - 5 e(B) \leq 4/3$. \label{case6biii}
    \end{enumerate}
\end{enumerate}
\end{proof}

Tables~\ref{tab:5block} and~\ref{tab:432block} in Appendix~\ref{appendix} give a summary of Lemmas~\ref{lem:easyblock}, \ref{lem:B5d}, and \ref{lem:B4b}.

\begin{lemma}\label{lem:partition}
    Let $G$ be a $2$-connected, $C_6$-free plane graph on $n$ $(n\geq 6)$ vertices with $\delta(G)\geq 3$. Then the triangular-blocks of $G$ can be partitioned into sets, $\mathcal{P}_1$, $\mathcal{P}_2$,\dots, $\mathcal{P}_m$ such that $7\sum\limits_{B\in\mathcal{P}_i}f(B)+2\sum\limits_{B\in\mathcal{P}_i}n(B)-5\sum\limits_{B\in\mathcal{P}_i}e(B)\leq 0$ for all $i\in[m]$.
\end{lemma}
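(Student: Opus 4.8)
The plan is to build the partition greedily from the per-block estimates already in hand. By Lemma~\ref{lem:easyblock} every block other than a $B_{5,d}$ or a $B_{4,b}$ already satisfies $7f(B)+2n(B)-5e(B)\le 0$, and by Lemmas~\ref{lem:B5d} and~\ref{lem:B4b} the same holds for any $B_{5,d}$ or $B_{4,b}$ that does \emph{not} share a $2$-path with a $4$-face. Hence I would place every such block in its own singleton part; the only parts needing more than one block are those built around a $B_{5,d}$ or $B_{4,b}$ incident to at least one $4$-face, since these are the only blocks carrying a positive charge.

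For each such offending block I would exploit the local structure of an incident $4$-face. By Proposition~\ref{prop:faces}\ref{it:faces:yes4face}, a $4$-face incident to such a block $B$ shares the $2$-path through the junction vertices $v_1,v_3$, and its remaining two edges $v_1u$, $uv_3$ are trivial blocks, where $u$ is the opposite vertex of the face. I form one part consisting of $B$ together with all the $B_2$ far-edges of its incident $4$-faces (two per face). This is a genuine partition: a far-edge $B_2$ lies on exactly one $4$-face, since its two incident faces cannot both be $4$-faces by Proposition~\ref{prop:faces}\ref{it:faces:two4faces}, and each $4$-face determines a unique nontrivial block through its $2$-path; so no $B_2$ is claimed twice and no two offending blocks compete for the same edge.

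It then remains to verify $7\sum f+2\sum n-5\sum e\le 0$ on each such part, and here the crucial input is the multiplicity of the apex $u$: since $\delta(G)\ge 3$, the vertex $u$ has a third edge and so lies in at least three blocks, whence each far-edge $B_2=xu$ has $n(B_2)=1/b(x)+1/b(u)\le 1/b(x)+1/3$, writing $b(\cdot)$ for the number of blocks at a vertex. Each far-edge contributes $1/4$ to its $4$-face, and its other face is neither a $4$-face (Proposition~\ref{prop:faces}\ref{it:faces:two4faces}) nor a $6$-face (as $G$ is $C_6$-free), so $f(B_2)\le 1/4+1/5$ in general. For a $B_{5,d}$ I would sharpen this to $f(B_2)\le 1/4+1/7$ by excluding an incident $5$-face: replacing the edge $v_1u$ of such a $5$-face by the path $v_1v_3u$ (using the interior edge $v_1v_3$ of $B_{5,d}$ and the face edge $uv_3$) turns the $5$-cycle into a $6$-cycle, a contradiction. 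Combining the charge $\le 1/2$ of $B_{5,d}$ with two far-edge charges of at most $-7/12$ each gives a part total of at most $-2/3$.

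For a $B_{4,b}$ I cannot exclude the adjacent $5$-faces, so instead I would rely on the forced high multiplicity of the junction vertices recorded inside the proof of Lemma~\ref{lem:B4b}: in the single-$4$-face case (charge $\le 1/6$) at least one of $v_1,v_3$ lies in three or more blocks, and in the double-$4$-face case (charge $\le 4/3$) both $v_1$ and $v_3$ do. Feeding $b(u)\ge 3$ and these bounds into the two (respectively four) far-edge charges yields part totals at most $1/6-7/10=-8/15$ and $4/3-31/15=-11/15$, both nonpositive. I expect the main obstacle to be precisely this $B_{4,b}$ arithmetic: the crude estimate $f(B_2)\le 1/4+1/5$ alone does not close the gap, and one must simultaneously use $b(u)\ge 3$ and the cut-vertex argument that pushes the endpoints of the shared $2$-path into three or more blocks. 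Confirming that these multiplicity gains are always genuinely available — rather than assumed — is the delicate point of the proof.
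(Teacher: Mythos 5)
Your proposal is correct and takes essentially the same route as the paper's proof: each positively charged $B_{5,d}$ or $B_{4,b}$ is grouped with the trivial far-edge blocks of its incident $4$-face(s), whose negative charges (obtained from $b(u)\ge 3$ via $\delta(G)\ge 3$, the no-adjacent-$4$-faces property, and, for $B_{5,d}$, the identical argument that a $5$-face at a far edge would extend through $v_1v_3u$ to a $6$-cycle) absorb the excess. Your explicit disjointness verification and the slightly different $B_{4,b}$ arithmetic (reusing the forced third block at a junction vertex to get $-8/15$ in place of the paper's $-1/5$) are minor refinements of, not departures from, the paper's argument.
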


\begin{proof}
As it can be seen from Tables~\ref{tab:5block} and~\ref{tab:432block} in Appendix~\ref{appendix}, there are three possible cases where $7 f(B) + 2 n(B) - 5e(B)$ assumes a positive value.  We deal with each of these blocks as follows.
\begin{enumerate}[label=(\arabic*)]
    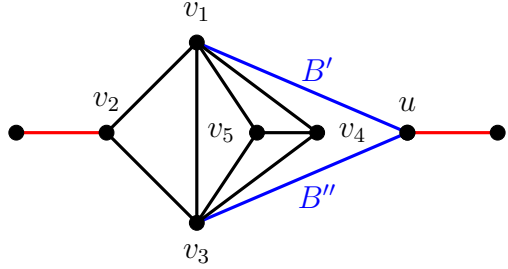
\begin{figure}[ht]
        \centering
        \begin{tikzpicture}[scale=0.2]
            \coordinate (v1) at (0,12);
            \coordinate (v2) at (-6,6);
            \coordinate (v3) at (0,0);
            \coordinate (v4) at (8,6);
            \coordinate (v5) at (4,6);
            \coordinate (u) at (14,6);
            \coordinate (leafl) at (-12,6);
            \coordinate (leafr) at (20,6);
            \coordinate (Bp) at (8,8);
            \coordinate (Bpp) at (8,4);
            \draw[blue,very thick] (v1) -- (u) -- (v3);
            \draw[red,very thick] (leafl) -- (v2);
            \draw[red,very thick] (leafr) -- (u);
            \draw[very thick] (v1) -- (v4) -- (v3) -- (v2) -- (v1);
            \draw[very thick] (v1) -- (v3);
            \draw[very thick] (v1) -- (v5) -- (v3);
            \draw[very thick] (v4) -- (v5);
            \draw[fill=black] (v1) circle(\circ) node[label=above:$v_1$] {};
            \draw[fill=black] (v2) circle(\circ)  node[label=above:$v_2$] {};
            \draw[fill=black] (v3) circle(\circ)  node[label=below:$v_3$] {};
            \draw[fill=black] (v4) circle(\circ) node[label=right:$v_4$] {};
            \draw[fill=black] (v5) circle(\circ)  node[label=left:$v_5$] {};
            \draw[fill=black] (u) circle(\circ) node[label=above:$u$] {};
            \draw[fill=black] (leafl) circle(\circ);
            \draw[fill=black] (leafr) circle(\circ);
            \draw[fill=black] (u) circle(\circ);
            \draw[blue] (Bp) node[label=above:$B'$] {};
            \draw[blue] (Bpp) node[label=below:$B''$] {};
        \end{tikzpicture}
        \caption{Structure of a $B_{5,d}$ if it is incident to a $4$-face, as in Lemma~\ref{lem:partition}. The triangular-blocks $B'$ and $B''$ are trivial.}
        \label{fig:B5dPartition}
    \end{figure}

    \item Let $B$ be a $B_{5,d}$ triangular-block as described in the proof of Lemma \ref{lem:B5d}\ref{case5c}. See Figure~\ref{fig:B5dPartition}.

    By Proposition~\ref{prop:faces}\ref{it:faces:yes4face}, the edges $v_1u$ and $v_3u$ are trivial triangular-blocks. Denote these triangular-blocks as $B'$ and $B''$. Consider $B'$. One of the exterior faces of $B'$ has length $4$ whereas by Proposition~\ref{prop:faces}\ref{it:faces:two4faces},the other has length at least $5$. It must have length at least $7$ because if it had length $5$, then the path $v_1v_3u$ would complete it to a $6$-cycle. Thus, $f(B') \leq 1/4 + 1/7$. Since the vertex $u$ cannot be of degree $2$, then this vertex is shared in at least three triangular-blocks. Thus, $n(B') \leq 1/2 + 1/3$. With $e(B') = 1$, we obtain $7 f(B') + 2 n(B') - 5 e(B') \leq -7/12$ and similarly, $7 f(B'') + 2 n(B'') - 5 e(B'') \leq -7/12$. Define $\mathcal{P}'=\{B,B',B''\}$. Thus, $7 \sum\limits_{B^*\in \mathcal{P}'} f(B^*) + 2 \sum\limits_{B^*\in \mathcal{P}'} n(B^*) - 5 \sum\limits_{B^*\in \mathcal{P}'} e(B^*) \leq 1/2 + 2(-7/12) = - 2/3$.

    Therefore, for each triangular-block in $G$ as described in Lemma \ref{lem:B5d}\ref{case5c}, it belongs to a set $\mathcal{P'}$ of three triangular-blocks such that $7 \sum\limits_{B^*\in \mathcal{P}'} f(B^*) + 2 \sum\limits_{B^*\in \mathcal{P}'} n(B^*) - 5 \sum\limits_{B^*\in \mathcal{P}'} e(B^*) \leq 0$.
    Denote such sets as $\mathcal{P}_1,\mathcal{P}_2,\dots,\mathcal{P}_{m_1}$ if they exist.

    \begin{figure}[ht]
        \centering
        \begin{tikzpicture}[scale=0.2]
            \coordinate (v1) at (0,12);
            \coordinate (v2) at (-6,6);
            \coordinate (v3) at (0,0);
            \coordinate (v4) at (6,6);
            \coordinate (u1) at (12,6);
            \coordinate (w) at (18,6);
            \coordinate (Bp) at (8,8);
            \coordinate (Bpp) at (8,4);
            \draw[red,very thick] (w) -- (u1);
            \draw[blue,very thick] (v3) -- (u1) -- (v1);
            \draw[very thick] (v3) -- (v4) -- (v1) -- (v2) -- (v3);
            \draw[very thick] (v4) -- (v2);
            \draw[fill=black] (v1) circle(\circ)  node[label=above:$v_1$] {};
            \draw[fill=black] (v2) circle(\circ)  node[label=left:$v_2$] {};
            \draw[fill=black] (v3) circle(\circ)  node[label=below:$v_3$] {};
            \draw[fill=black] (v4) circle(\circ)  node[label=right:$v_4$] {};
            \draw[fill=black] (u1) circle(\circ) node[label=above:$u_1$] {};
            \draw[fill=black] (w) circle(\circ);
            \draw[blue] (Bp) node[label=above:$B'$] {};
            \draw[blue] (Bpp) node[label=below:$B''$] {};
            \node at (0,-6) {$(a)$};
        \end{tikzpicture}
        \qquad\qquad\qquad
        \begin{tikzpicture}[scale=0.2]
            \coordinate (v1) at (0,12);
            \coordinate (v2) at (-6,6);
            \coordinate (v3) at (0,0);
            \coordinate (v4) at (6,6);
            \coordinate (u1) at (12,6);
            \coordinate (u2) at (-12,6);
            \coordinate (Bp) at (8,8);
            \coordinate (Bpp) at (8,4);
            \coordinate (Bppp) at (-8,8);
            \coordinate (Bpppp) at (-8,4);
            \draw[blue,very thick] (v3) -- (u1) -- (v1);
            \draw[blue,very thick] (v3) -- (u2) -- (v1);
            \draw[very thick] (v3) -- (v4) -- (v1) -- (v2) -- (v3);
            \draw[very thick] (v4) -- (v2);
            \draw[fill=black] (v1) circle(\circ)  node[label=above:$v_1$] {};
            \draw[fill=black] (v2) circle(\circ)  node[label=left:$v_2$] {};
            \draw[fill=black] (v3) circle(\circ)  node[label=below:$v_3$] {};
            \draw[fill=black] (v4) circle(\circ)  node[label=right:$v_4$] {};
            \draw[fill=black] (u1) circle(\circ) node[label=above:$u_1$] {};
            \draw[fill=black] (u2) circle(\circ) node[label=above:$u_2$] {};
            \draw[blue] (Bp) node[label=above:$B'$] {};
            \draw[blue] (Bpp) node[label=below:$B''$] {};
            \draw[blue] (Bppp) node[label=above:$B'''$] {};
            \draw[blue] (Bpppp) node[label=below:$B''''$] {};
            \node at (0,-6) {$(b)$};
        \end{tikzpicture}
        \caption{Structure of a $B_{4,b}$ triangular-block if it is incident to a $4$-face, as in Lemma~\ref{lem:partition}. The triangular-blocks $B'$, $B''$, $B'''$, and $B''''$ are all trivial.}
        \label{fig:B4bPartition}
    \end{figure}
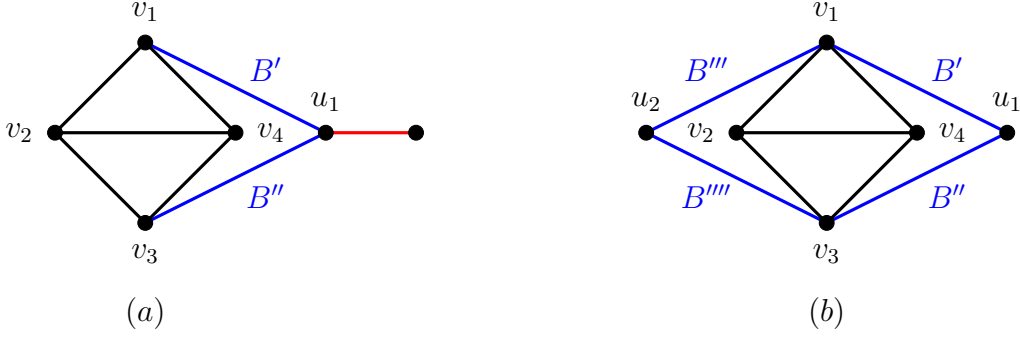

    \item Let $B$ be a $B_{4,b}$ triangular-block as described in the proof of Lemma~\ref{lem:B4b}\ref{case6b}\ref{case6bii}. See Figure~\ref{fig:B4bPartition}(a).

    By Proposition~\ref{prop:faces}\ref{it:faces:yes4face}, the edges $v_1u_1$ and $v_3u_1$ are trivial triangular-blocks. Denote them as $B'$ and $B''$, respectively. Consider $B'$. One of the exterior faces of $B'$ has length $4$ and by Proposition~\ref{prop:faces}\ref{it:faces:two4faces}, the other has length at least $5$. Thus, $f(B') \leq 1/4 + 1/5$. Since the vertex $u_1$ cannot be of degree $2$, then this vertex is shared in at least three triangular-blocks. Thus, $n(B') \leq 1/2 + 1/3$. With $e(B') = 1$, we obtain $7 f(B') + 2 n(B') - 5 e(B') \leq -11/60$ and similarly, $7 f(B'') + 2 n(B'') - 5 e(B'') \leq -11/60$. Define $\mathcal{P}''=\{B,B',B''\}$. Thus, $7 \sum\limits_{B^*\in \mathcal{P}''} f(B^*) + 2 \sum\limits_{B^*\in \mathcal{P}''} n(B^*) - 5 \sum\limits_{B^*\in \mathcal{P}''} e(B^*) \leq 1/6 + 2(-11/60) = -1/5$.

    Therefore, for each triangular-block in $G$ as described in Lemma \ref{lem:B4b}\ref{case6b}\ref{case6bii}, it belongs to a set $\mathcal{P''}$ of three triangular-blocks such that $7 \sum\limits_{B^*\in \mathcal{P}''} f(B^*) + 2 \sum\limits_{B^*\in \mathcal{P}''} n(B^*) - 5 \sum\limits_{B^*\in \mathcal{P}''} e(B^*) \leq 0$.
    Denote such sets as $\mathcal{P}_{m_1+1},\mathcal{P}_{m_1+2},\dots,\mathcal{P}_{m_2}$ if they exist.

    \item Let $B$ be a $B_{4,b}$ triangular-block as described in the proof of Lemma~\ref{lem:B4b}\ref{case6b}\ref{case6biii}. See Figure~\ref{fig:B4bPartition}(b).

    By Proposition~\ref{prop:faces}\ref{it:faces:yes4face}, the edges $v_1u_1$, $v_3u_1$, $v_1u_2$, and $v_3u_2$ are trivial triangular-blocks. Denote them as $B'$, $B''$, $B'''$ and $B''''$ respectively. Consider $B'$. One of the exterior faces of $B'$ has length $4$ whereas the other has length at least $5$. Thus, $f(B') \leq 1/4 + 1/5$. Since the vertex $u_1$ cannot be of degree $2$, then this vertex is shared in at least three triangular-blocks. Clearly $v_1$ is in at least three triangular-blocks. Thus, $n(B') \leq 2/3$. With $e(B') = 1$, we obtain $7 f(B') + 2 n(B') - 5 e(B') \leq -31/60$ and the same inequality holds for $B''$, $B'''$, and $B''''$.

    Define $\mathcal{P}'''=\{B,B',B'',B''',B''''\}$. Thus, $7 \sum\limits_{B^*\in \mathcal{P}''} f(B^*) + 2 \sum\limits_{B^*\in \mathcal{P}''} n(B^*) - 5 \sum\limits_{B^*\in \mathcal{P}''} e(B^*) \leq 4/3 + 4(-31/60) = -11/15$.

    Therefore, for each triangular-block in $G$ as described in Lemma \ref{lem:B4b}\ref{case6b}\ref{case6biii}, it belongs to a set $\mathcal{P'}$ of three triangular-blocks such that $7 \sum\limits_{B^*\in \mathcal{P}'''} f(B^*) + 2 \sum\limits_{B^*\in \mathcal{P}'''} n(B^*) - 5 \sum\limits_{B^*\in \mathcal{P}'''} e(B^*) \leq 0$.
    Denote such sets as $\mathcal{P}_{m_2+1},\mathcal{P}_{m_2+2},\dots,\mathcal{P}_{m_3}$ if they exist.
\end{enumerate}

Now define $\mathcal{P}_{m_3+1}=\mathcal{B}-\bigcup\limits_{i=1}^{m_3}\mathcal{P}_i$, where $\mathcal{B}$ is the set of all blocks of $G$. Clearly, for each block $B\in \mathcal{P}_{m_3+1}$, $7f(B)+2n(B)-5e(B)\leq 0$. Thus, $7\sum\limits_{B\in \mathcal{P}_{m_3+1}}f(B)+2\sum\limits_{B\in \mathcal{P}_{m_3+1}}n(B)-5\sum\limits_{B\in \mathcal{P}_{m_3+1}}e(B)\leq 0$. Putting $m:=m_3+1$ we got the partition $\mathcal{P}_1,\mathcal{P}_2,\dots,\mathcal{P}_{m}$ of $\mathcal{B}$ meeting the condition of the lemma.
\end{proof}
This completes the proof of Theorem~\ref{thm:main}.

\section{Proof of Theorem~\ref{thm:main_new}}\label{strongproof}

Let $G$ be a $C_6$-free plane graph. We will show that either $5 v(G) - 2 e(G) \geq 14$ or $v(G) \leq 17$.

If we delete a vertex $x$ from $G$, then
\begin{align*}
    5 v(G-x) - 2 e(G-x) &=      5 (v(G) - 1) - 2 (e(G) - \deg(x)) \\
                        &=      5 v(G) - 2 e(G) - 5 + 2 \deg(x) \\
                        &\geq   5 v(G) - 2 e(G) - 1.
\end{align*}

So, graph $G$ has an induced subgraph $G'$ with $\delta(G)\geq 3$ with
\begin{align}
    5 v(G) - 2 e(G) \geq 5 v(G') - 2 e(G') + \left(v(G)-v(G')\right) \label{eq:main_new:GtoGp}
\end{align}
In line with usual graph theoretic terminology, we call a maximal $2$-connected subgraph a \textbf{block}. Let $\mathcal{B}'$ denote the set of blocks of $G'$ with the $i^{\rm th}$ block having $n_i$ vertices and $e_i$ edges. Let $b$ be the total number of blocks of $G'$. Specifically, let $b_2$, $b_3$, $b_4$, and $b_5$ denote the number of blocks of size $2$, $3$, $4$, and $5$, respectively. Let $b_6$ denote the number of blocks of size at least $6$. Then we have $b = b_6 + b_5 + b_4 + b_3 + b_2$ and, using Table~\ref{tab:5n2e5}:
\begin{align}
    5 v(G') - 2 e(G')   &=      5 \left(\sum_{i=1}^b n_i - (b-1)\right) - 2 \sum_{i=1}^b e_i \nonumber \\
                        &=      \sum_{i=1}^b \left(5n_i - 2e_i - 5\right) + 5 \nonumber \\
                        &\geq   9 b_6 + 2 b_5 + 3 b_4 + 4 b_3 + 3 b_2 + 5 \label{eq:main_new:blocks}
\end{align}

\begin{table}
     \centering
     \begin{tabular}{|r|rl|l|}\hline
                    &   \multicolumn{3}{l|}{$\min$ of $5 n - 2 e - 5$} \\ \hline\hline
        $n\geq 6$   &   $14-5\; \geq$           & 9     & Theorem~\ref{thm:main} \\ \hline
        $n = 5$     &   $5(5)-2(9)-5\; \geq$    & 2     & $B_{5,a}$, Figure~\ref{fig:5blocks} \\ \hline
        $n = 4$     &   $5(4)-2(6)-5\; \geq$    & 3     & $B_{4,a}$, Figure~\ref{fig:432blocks} \\ \hline
        $n = 3$     &   $5(3)-2(3)-5\; \geq$    & 4     & $B_3$, Figure~\ref{fig:432blocks} \\ \hline
        $n = 2$     &   $5(2)-2(2)-5\; \geq$    & 3     & $B_2$, Figure~\ref{fig:432blocks} \\ \hline
     \end{tabular}
     \caption{Estimates of $5n-2e-5$ for various block sizes.}
     \label{tab:5n2e5}
\end{table}

Combining \eqref{eq:main_new:GtoGp} and \eqref{eq:main_new:blocks}, we obtain
\begin{align}
    5 v(G) - 2 e(G) \geq 9 b_6 + 2 b_5 + 3 b_4 + 4 b_3 + 3 b_2 + 5 + \left(v(G)-v(G')\right) \label{eq:main_new:Gbound}
\end{align}

If $b_6\geq 1$, then the right-hand side of \eqref{eq:main_new:Gbound} is at least $14$, as desired.

So, let us assume that $b_6=0$ and $b = b_5 + b_4 + b_3 + b_2$. Furthermore,
\begin{align}
    v(G')   &=  5 b_5 + 4 b_4 + 3 b_3 + 2 b_2 - (b-1) \nonumber \\
            &=  4 b_5 + 3 b_4 + 2 b_3 + b_2 + 1 . \label{eq:main_new:vGp}
\end{align}

So, substituting $2 b_5$ from \eqref{eq:main_new:vGp} into \eqref{eq:main_new:Gbound}, we have
\begin{align*}
    5 v(G) - 2 e(G)     &\geq   2 b_5 + 3 b_4 + 4 b_3 + 3 b_2 + 5 + \left(v(G)-v(G')\right) \\
                        &=      \left(\frac{1}{2}v(G') - \frac{3}{2} b_4 - b_3 - \frac{1}{2} b_2 - \frac{1}{2}\right) + 3 b_4 + 4 b_3 + 3 b_2 + 5 + \left(v(G)-v(G')\right) \\
                        &=      v(G) - \frac{1}{2}v(G') + \frac{3}{2} b_4 + 3 b_3 + \frac{5}{2} v_2 + \frac{9}{2} \\
                        &\geq   \frac{1}{2}v(G) + \frac{9}{2} ,
\end{align*}
which is strictly larger than $13$ if $v(G)\geq 18$. Since $5 v(G) - 2 e(G)$ is an integer, it is at least $14$ and this completes the proof of Theorem~\ref{thm:main_new}.

\begin{remark}
    Observe that for $n\geq 17$, the only graphs on $n$ vertices with $e$ edges such that $e > (5/2)n - 7$ have blocks of order $5$ or less and by \eqref{eq:main_new:Gbound}, there are at most $4$ such triangular blocks. A bit of analysis shows that the maximum number of edges is achieved when the number of blocks of order $5$ is as large as possible.
\end{remark}

\section{Conclusions}

We note that the proof of Theorem~\ref{thm:main}, particularly Lemma~\ref{lem:partition}, can be rephrased in terms of a discharging argument.

We believe that our construction in Theorem~\ref{thm:construct} can be generalized to prove ${\rm ex}_{\mathcal{P}}(n,C_{\ell})$ for $\ell$ sufficiently large. That is, for certain values of $n$, we try to construct $G_0$, a plane graph with all faces of length $\ell+1$ with all vertices having degree $3$ or degree $2$.

If such a $G_0$ exists, then the number of degree-$2$ and degree-$3$ vertices are $\frac{(\ell-5)n+4(\ell+1)}{\ell-1}$ and $\frac{4(n-\ell-1)}{\ell-1}$, respectively. We could then apply steps similar to (1), (2), and (3) in the proof of Theorem~\ref{thm:construct} in that we add halving vertices and insert a graph $B_{\ell-1}$ (see Figure~\ref{fig:gadgets}) in place of vertices of degree $2$ and $3$. For the resulting graph $G$,
\begin{align*}
    v(G)    &=  v(G_0)+e(G_0)+(\ell-4)\frac{(\ell-5)n+4(\ell+1)}{\ell-1}+(\ell-5)\frac{4(n-\ell-1)}{\ell-1} \\
            &=  n+\frac{\ell+1}{\ell-1}(n-2)+\frac{(\ell^2-5\ell)n+2(\ell+1)}{\ell-1} \\
            &=  \frac{\ell^2-3\ell}{\ell-1}n+\frac{2(\ell+1)}{\ell} \\
    e(G)    &=  (3\ell-9)v(G_0)=(3\ell-9)n
\end{align*}

Therefore, $e(G)=\frac{3(\ell-1)}{\ell}v(G)-\frac{6(\ell+1)}{\ell}$. We conjecture that this is the maximum number of edges in a $C_{\ell}$-free planar graph.


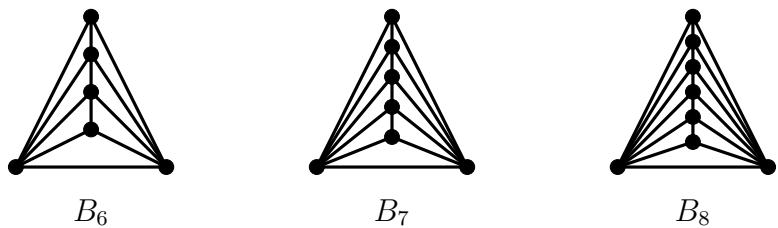
\begin{figure}[ht]
    \centering
    \begin{tikzpicture}[scale=0.2]
        \coordinate (x1) at (-5,0);
        \coordinate (x2) at (5,0);
        \coordinate (y3) at (0,10);
        \coordinate (y4) at (0,7.5);
        \coordinate (y5) at (0,5);
        \coordinate (y6) at (0,2.5);
        \draw[very thick] (x1) -- (x2);
        \draw[very thick] (x1) -- (y3) -- (x2);
        \draw[very thick] (x1) -- (y4) -- (x2);
        \draw[very thick] (x1) -- (y5) -- (x2);
        \draw[very thick] (x1) -- (y6) -- (x2);
        \draw[very thick] (y3) -- (y4) -- (y5) -- (y6);
        \draw[fill=black] (x1) circle(\circ);
        \draw[fill=black] (x2) circle(\circ);
        \draw[fill=black] (y3) circle(\circ);
        \draw[fill=black] (y4) circle(\circ);
        \draw[fill=black] (y5) circle(\circ);
        \draw[fill=black] (y6) circle(\circ);
        \node at (0,-3){$B_6$};
    \end{tikzpicture}
    \qquad\qquad
    \begin{tikzpicture}[scale=0.2]
        \coordinate (x1) at (-5,0);
        \coordinate (x2) at (5,0);
        \coordinate (y3) at (0,10);
        \coordinate (y4) at (0,8);
        \coordinate (y5) at (0,6);
        \coordinate (y6) at (0,4);
        \coordinate (y7) at (0,2);
        \draw[very thick] (x1) -- (x2);
        \draw[very thick] (x1) -- (y3) -- (x2);
        \draw[very thick] (x1) -- (y4) -- (x2);
        \draw[very thick] (x1) -- (y5) -- (x2);
        \draw[very thick] (x1) -- (y6) -- (x2);
        \draw[very thick] (x1) -- (y7) -- (x2);
        \draw[very thick] (y3) -- (y4) -- (y5) -- (y6) -- (y7);
        \draw[fill=black] (x1) circle(\circ);
        \draw[fill=black] (x2) circle(\circ);
        \draw[fill=black] (y3) circle(\circ);
        \draw[fill=black] (y4) circle(\circ);
        \draw[fill=black] (y5) circle(\circ);
        \draw[fill=black] (y6) circle(\circ);
        \draw[fill=black] (y7) circle(\circ);
        \node at (0,-3){$B_7$};
    \end{tikzpicture}
    \qquad\qquad
    \begin{tikzpicture}[scale=0.2]
        \coordinate (x1) at (-5,0);
        \coordinate (x2) at (5,0);
        \coordinate (y3) at (0,10);
        \coordinate (y4) at (0,25/3);
        \coordinate (y5) at (0,20/3);
        \coordinate (y6) at (0,5);
        \coordinate (y7) at (0,10/3);
        \coordinate (y8) at (0,5/3);
        \draw[very thick] (x1) -- (x2);
        \draw[very thick] (x1) -- (y3) -- (x2);
        \draw[very thick] (x1) -- (y4) -- (x2);
        \draw[very thick] (x1) -- (y5) -- (x2);
        \draw[very thick] (x1) -- (y6) -- (x2);
        \draw[very thick] (x1) -- (y7) -- (x2);
        \draw[very thick] (x1) -- (y8) -- (x2);
        \draw[very thick] (y3) -- (y4) -- (y5) -- (y6) -- (y7) -- (y8);
        \draw[fill=black] (x1) circle(\circ);
        \draw[fill=black] (x2) circle(\circ);
        \draw[fill=black] (y3) circle(\circ);
        \draw[fill=black] (y4) circle(\circ);
        \draw[fill=black] (y5) circle(\circ);
        \draw[fill=black] (y6) circle(\circ);
        \draw[fill=black] (y7) circle(\circ);
        \draw[fill=black] (y8) circle(\circ);
       \node at (0,-3){$B_8$};
    \end{tikzpicture}
    \caption{$B_{\ell-1}$ is used in the construction of a $C_{\ell}$-free graph.}
    \label{fig:gadgets}
\end{figure}

\begin{conjecture}
    Let $G$ be an $n$-vertex $C_{\ell}$-free plane graph ($\ell\geq 7$), then there exists an integer $N_0>0$, such that when $n\geq N_0$, $e(G)\leq \frac{3(\ell-1)}{\ell}n-\frac{6(\ell+1)}{\ell}$.
\end{conjecture}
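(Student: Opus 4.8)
The plan is to prove the conjectured upper bound by lifting the entire $C_6$ apparatus to general $\ell$; note that Theorem~\ref{thm:construct} together with the gadgets $B_{\ell-1}$ of Figure~\ref{fig:gadgets} already certifies tightness, so only the upper bound is at issue. The first move mirrors Section~\ref{strongproof}: introduce the potential $\Phi(G)=3(\ell-1)v(G)-\ell e(G)$, which satisfies $\Phi(G)=\Phi(G-x)+3(\ell-1)-\ell\deg(x)$, so deleting a vertex of degree at most $2$ only lowers $\Phi$ (since $3(\ell-1)-2\ell=\ell-3>0$ for $\ell\ge 7$). Passing to the minimum-degree-$\ge 3$ core $G'$, decomposing it into $2$-connected blocks, and summing a generalized version of Table~\ref{tab:5n2e5} should yield that either $\Phi(G)\ge 6(\ell+1)$ (the desired bound) or $v(G)$ is below some threshold $N_0$. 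This reduces everything to the $2$-connected, $\delta\ge 3$ case.

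For that case the target is the discharging inequality that plays the role of $7f+2n-5e\le 0$. A short computation with Euler's formula shows the correct generalization is
\[
    (\ell+1)\,f + 2n - \frac{2\ell+3}{3}\,e \;\le\; 0,
\]
which is equivalent to $e\le \frac{3(\ell-1)}{\ell}n-\frac{6(\ell+1)}{\ell}$ and specializes to $7f+2n-5e\le 0$ at $\ell=6$. I would prove it block-by-block through the contribution functions $f(B),n(B),e(B)$, after establishing two structural inputs: a generalization of Lemma~\ref{lem:5block} stating that every triangular-block of a $C_\ell$-free plane graph has at most $\ell-1$ vertices (with $B_{\ell-1}$ extremal), and a generalization of Proposition~\ref{prop:faces} bounding incident non-triangular faces below by $\ell+1$ except for explicit short-face "shortcuts" through a block, together with the fact that in a $2$-connected graph no face has length exactly $\ell$.

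The partition step then follows Lemma~\ref{lem:partition}: for each block $B$ set $g(B):=3(\ell+1)f(B)+6n(B)-(2\ell+3)e(B)$, show $g(B)\le 0$ for all but a bounded family of exceptional configurations (blocks bordering short faces, as in Lemmas~\ref{lem:B5d} and~\ref{lem:B4b}), and group each exceptional block with the trivial triangular-blocks forming its incident short faces. Because a short face of length $k$ incident to a nontrivial block forces its remaining $k-2$ edges into trivial blocks whose opposite exterior faces must be long (length $\ge \ell+1$, else a $C_\ell$ or a degree-$2$ vertex appears), these trivial blocks carry a negative surplus that should cover the deficit, giving $\sum_{B\in\mathcal{P}_i}g(B)\le 0$ on every part.

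The main obstacle is that the case analysis does not stay finite as $\ell$ grows: the number of triangular-block types on up to $\ell-1$ vertices and the number of admissible short-face lengths $4,5,\dots,\ell-1$ both increase, so the explicit tables of the $C_6$ proof cannot simply be copied. The real work is therefore to replace them by an $\ell$-uniform estimate of $g(B)$ in terms of $v(B)$, the number of interior faces, and the lengths of the incident exterior faces, and by a single length-independent discharging rule showing that the credit of the trivial blocks along any short face of length $k$ dominates the debit of the exceptional block for every $4\le k\le \ell-1$. A second, possibly deeper, difficulty hides in the size bound on blocks: its proof requires that every triangulated disk on $\ell$ vertices contain a $C_\ell$, a pancyclicity/Hamiltonicity statement that is transparent for small $\ell$ (it is exactly Lemma~\ref{lem:5block} when $\ell=6$) but is genuinely delicate for large $\ell$ because triangulations with separating triangles need not be Hamiltonian; controlling this is, I expect, precisely why the statement is posed only as a conjecture.
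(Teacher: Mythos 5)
The statement you set out to prove is posed in the paper only as a conjecture: the authors supply the matching construction (the $B_{\ell-1}$ gadgets of Figure~\ref{fig:gadgets} applied to a plane graph $G_0$ whose faces all have length $\ell+1$ and whose degrees are all $2$ or $3$), but they give no upper-bound proof for $\ell\geq 7$ — so there is no proof in the paper to compare yours against. Your proposal does get the arithmetic skeleton right: your inequality $(\ell+1)f+2n-\frac{2\ell+3}{3}e\leq 0$ is, via Euler's formula, equivalent to $e\leq\frac{3(\ell-1)}{\ell}n-\frac{6(\ell+1)}{\ell}$ and specializes to $7f+2n-5e\leq 0$ at $\ell=6$, and the potential-function reduction to the $2$-connected, $\delta\geq 3$ core is a faithful analogue of Section~\ref{strongproof}. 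But what you have written is a research program, not a proof, and you acknowledge this yourself; the two obstacles you flag are precisely where the argument stops and cannot currently be pushed through.

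Concretely: (1) the generalization of Lemma~\ref{lem:5block} you need — every triangular-block of a $C_\ell$-free plane graph has at most $\ell-1$ vertices — amounts to showing that every triangulated block on $\ell$ or more vertices contains a cycle of length exactly $\ell$. For $\ell=6$ this is a finite check on six-vertex configurations (the two cases in the paper's proof of Lemma~\ref{lem:5block}), but for large $\ell$ it is a cycle-spectrum statement about planar triangulations that may contain many separating triangles, and such triangulations need not even be Hamiltonian, so no off-the-shelf theorem delivers it. (2) The block-by-block verification of $g(B)\leq 0$, together with discharging across short faces of every admissible length $4,5,\dots,\ell-1$, is an unbounded case analysis as $\ell$ grows; replacing Tables~\ref{tab:5block} and~\ref{tab:432block} by a single $\ell$-uniform estimate is exactly the open content of the conjecture, and your proposal names this as ``the real work'' without supplying it. So the proposal has genuine gaps — two of them — which is consistent with the authors having left the statement as a conjecture rather than a theorem.
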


\section{Acknowledgements}
Gy\H ori's research was partially supported by the National Research, Development and Innovation Office NKFIH, grants  K132696, K116769, and K126853.
Martin's research was partially supported by Simons Foundation Collaboration Grant \#353292 and by the J. William Fulbright Educational Exchange Program.


\newpage
\appendix
\section{Tables}
\label{appendix}

The following tables give a summary of the results from Lemmas~\ref{lem:easyblock}, \ref{lem:B5d}, and~\ref{lem:B4b}.

A red edge incident to a vertex of a triangular-block indicates the corresponding vertex is a junction vertex. Moreover, if a vertex has only one red edge, it is to indicate the vertex is shared in at least two triangular-blocks. Whereas if a vertex has two red edges, it means that the vertex is shared in at least three blocks.

A pair of blue edges indicates the boundary of a $4$-face.

\begin{table}[ht]
    \renewcommand\arraystretch{2}\par
    \centering
    \newlength{\casebox}
    \setlength{\casebox}{1.8cm}
    \begin{tabular}{|c|c|c|c|c|c|c|}\hline
        \makebox[\casebox][c]{Case} &
        \makebox[0.7cm]{$B$} &
        \makebox[3.0cm]{Diagram} &
        \makebox[1.8cm]{$f(B)\leq$} &
        \makebox[1.4cm]{$n(B)\leq$} &
        \makebox[1.4cm]{$e(B)=$} &
        \makebox[2.9cm]{$7f+2n-5e\leq$} \\ \hline \hline
    \parbox[c]{\casebox}{\centering Lemma~\ref{lem:easyblock} \\ 1\ref{case1a}} &
    $B_{5,a}$   &
    \parbox[c]{1em}{%
        \renewcommand\arraystretch{2}\par%
        \centerline{%
            \begin{tikzpicture}[scale=0.2]
                \coordinate (x1) at (-3,0);
                \coordinate (x2) at (0,5);
                \coordinate (x3) at (3,0);
                \coordinate (l1) at (-6,-1.5);
                \coordinate (l2) at (0,8);
                \coordinate (l3) at (6,-1.5);
                \draw[red,very thick] (l1) -- (x1);
                \draw[red,very thick] (l2) -- (x2);
                \draw[red,very thick] (l3) -- (x3);
                \draw[very thick,fill=gray!20] (x1) -- (x2) -- (x3) -- (x1);
                \draw[fill=black] (x1) circle(\circ);
                \draw[fill=black] (x2) circle(\circ);
                \draw[fill=black] (x3) circle(\circ);
                \draw[fill=black] (l1) circle(\circ);
                \draw[fill=black] (l2) circle(\circ);
                \draw[fill=black] (l3) circle(\circ);
                \node at (0,1.5) {$K_5^-$};
                \node at (-6.5,-2) {};
                \node at (6.5,8.5) {};
            \end{tikzpicture}}} &
    $5 + \dfrac{3}{7}$ & $2 + \dfrac{3}{2}$ & 9 & 0 \\ \hline
    \parbox[c]{\casebox}{\centering Lemma~\ref{lem:easyblock} \\ 1\ref{case1b}} &
    $B_{5,a}$    &
    \parbox[c]{1em}{%
        \renewcommand\arraystretch{2}\par%
        \centerline{%
            \begin{tikzpicture}[scale=0.2]
                \coordinate (x1) at (-3,0);
                \coordinate (x2) at (0,5);
                \coordinate (x3) at (3,0);
                \coordinate (l1) at (-6,-1.5);
                \coordinate (l3) at (6,-1.5);
                \draw[red,very thick] (l1) -- (x1);
                \draw[red,very thick] (l3) -- (x3);
                \draw[very thick,fill=gray!20] (x1) -- (x2) -- (x3) -- (x1);
                \draw[fill=black] (x1) circle(\circ);
                \draw[fill=black] (x2) circle(\circ);
                \draw[fill=black] (x3) circle(\circ);
                \draw[fill=black] (l1) circle(\circ);
                \draw[fill=black] (l3) circle(\circ);
                \node at (0,1.5) {$K_5^-$};
                \node at (-6.5,-2) {};
                \node at (6.5,5.5) {};
            \end{tikzpicture}}} &
    $5 + \dfrac{2}{7}$ & $3 + \dfrac{2}{2}$ & 9 & 0 \\ \hline
    \parbox[c]{\casebox}{\centering Lemma~\ref{lem:easyblock} \\ 2\ref{case2b}} &
    $B_{5,b}$   &
    \parbox[c]{1em}{%
        \renewcommand\arraystretch{2}\par%
        \centerline{%
            \begin{tikzpicture}[scale=0.2]
                \coordinate (x1) at (-3,6);
                \coordinate (x2) at (3,6);
                \coordinate (x3) at (3,0);
                \coordinate (x4) at (-3,0);
                \coordinate (x5) at (0,3);
                \draw[very thick] (x1) -- (x2) -- (x3) -- (x4) -- (x1);
                \draw[very thick] (x1) -- (x5);
                \draw[very thick] (x2) -- (x5);
                \draw[very thick] (x3) -- (x5);
                \draw[very thick] (x4) -- (x5);
                \draw[very thick] (x5) -- (x5);
                \draw[fill=black] (x1) circle(\circ);
                \draw[fill=black] (x2) circle(\circ);
                \draw[fill=black] (x3) circle(\circ);
                \draw[fill=black] (x4) circle(\circ);
                \draw[fill=black] (x5) circle(\circ);
                \node at (-3.5,-0.5) {};
                \node at (3.5,6.5) {};
            \end{tikzpicture}}} &
    $4 + \dfrac{4}{7}$ & $3 + \dfrac{2}{2}$ & 8 & 0 \\ \hline
    \parbox[c]{\casebox}{\centering Lemma~\ref{lem:easyblock} \\ 2\ref{case2c}} &
    $B_{5,c}$   &
    \parbox[c]{1em}{%
        \renewcommand\arraystretch{2}\par%
        \centerline{%
            \begin{tikzpicture}[scale=0.2]
                \coordinate (x1) at (-3,6);
                \coordinate (x2) at (3,6);
                \coordinate (x3) at (5,3);
                \coordinate (x4) at (3,0);
                \coordinate (x5) at (-3,0);
                \draw[very thick] (x1) -- (x2) -- (x3) -- (x4) -- (x5) -- (x1);
                \draw[very thick] (x5) -- (x2) -- (x4);
                \draw[fill=black] (x1) circle(\circ);
                \draw[fill=black] (x2) circle(\circ);
                \draw[fill=black] (x3) circle(\circ);
                \draw[fill=black] (x4) circle(\circ);
                \draw[fill=black] (x5) circle(\circ);
                \node at (-3.5,-0.5) {};
                \node at (5.5,6.5) {};
            \end{tikzpicture}}} &
    $3+ \dfrac{5}{7}$ & $3 + \dfrac{2}{2}$ & 7 & $-1$ \\ \hline
    \parbox[c]{\casebox}{\centering Lemma~\ref{lem:B5d} \\ \ref{case5a}} &
    $B_{5,d}$   &
    \parbox[c]{1em}{%
        \renewcommand\arraystretch{2}\par%
        \centerline{%
            \begin{tikzpicture}[scale=0.2]
                \coordinate (x1) at (0,8);
                \coordinate (x2) at (4,4);
                \coordinate (x3) at (2,4);
                \coordinate (x4) at (0,0);
                \coordinate (x5) at (-4,4);
                \coordinate (l2) at (7,4);
                \coordinate (l5) at (-7,4);
                \draw[red,very thick] (l2) -- (x2);
                \draw[red,very thick] (l5) -- (x5);
                \draw[very thick] (x1) -- (x2) -- (x4) -- (x5) -- (x1);
                \draw[very thick] (x1) -- (x4);
                \draw[very thick] (x1) -- (x3) -- (x4);
                \draw[very thick] (x2) -- (x3);
                \draw[fill=black] (x1) circle(\circ);
                \draw[fill=black] (x2) circle(\circ);
                \draw[fill=black] (x3) circle(\circ);
                \draw[fill=black] (x4) circle(\circ);
                \draw[fill=black] (x5) circle(\circ);
                \draw[fill=black] (l2) circle(\circ);
                \draw[fill=black] (l5) circle(\circ);
                \node at (-7.5,-0.5) {};
                \node at (7.5,8.5) {};
            \end{tikzpicture}}} &
    $4 + \dfrac{4}{7}$ & $3 + \dfrac{2}{2}$ & 8 & $0$ \\ \hline
    \parbox[c]{\casebox}{\centering Lemma~\ref{lem:B5d} \\ \ref{case5b}} &
    $B_{5,d}$   &
    \parbox[c]{1em}{%
        \renewcommand\arraystretch{2}\par%
        \centerline{%
            \begin{tikzpicture}[scale=0.2]
                \coordinate (x1) at (0,8);
                \coordinate (x2) at (4,4);
                \coordinate (x3) at (2,4);
                \coordinate (x4) at (0,0);
                \coordinate (x5) at (-4,4);
                \coordinate (l4) at (4,0);
                \coordinate (l5) at (-7,4);
                \draw[red,very thick] (l5) -- (x5);
                \draw[red,very thick] (l4) -- (x4);
                \draw[very thick] (x1) -- (x2) -- (x4) -- (x5) -- (x1);
                \draw[very thick] (x1) -- (x4);
                \draw[very thick] (x1) -- (x3) -- (x4);
                \draw[very thick] (x2) -- (x3);
                \draw[fill=black] (x1) circle(\circ);
                \draw[fill=black] (x2) circle(\circ);
                \draw[fill=black] (x3) circle(\circ);
                \draw[fill=black] (x4) circle(\circ);
                \draw[fill=black] (x5) circle(\circ);
                \draw[fill=black] (l5) circle(\circ);
                \draw[fill=black] (l4) circle(\circ);
                \node at (-7.5,-0.5) {};
                \node at (7.5,8.5) {};
            \end{tikzpicture}}} &
    $4 + \dfrac{4}{7}$ & $3 + \dfrac{2}{2}$ & 8 & $0$ \\ \hline
    \parbox[c]{\casebox}{\centering Lemma~\ref{lem:B5d} \\ \ref{case5c}} &
    $B_{5,d}$   &
    \parbox[c]{1em}{%
        \renewcommand\arraystretch{2}\par%
        \centerline{%
            \begin{tikzpicture}[scale=0.2]
                \coordinate (x1) at (0,8);
                \coordinate (x2) at (4,4);
                \coordinate (x3) at (2,4);
                \coordinate (x4) at (0,0);
                \coordinate (x5) at (-4,4);
                \coordinate (l2) at (7,4);
                \coordinate (l5) at (-7,4);
                \draw[red,very thick] (l5) -- (x5);
                \draw[blue,very thick] (x1) -- (l2) -- (x4);
                \draw[very thick] (x1) -- (x2) -- (x4) -- (x5) -- (x1);
                \draw[very thick] (x1) -- (x4);
                \draw[very thick] (x1) -- (x3) -- (x4);
                \draw[very thick] (x2) -- (x3);
                \draw[fill=black] (x1) circle(\circ);
                \draw[fill=black] (x2) circle(\circ);
                \draw[fill=black] (x3) circle(\circ);
                \draw[fill=black] (x4) circle(\circ);
                \draw[fill=black] (x5) circle(\circ);
                \draw[fill=black] (l2) circle(\circ);
                \draw[fill=black] (l5) circle(\circ);
                \node at (-7.5,-0.5) {};
                \node at (7.5,8.5) {};
            \end{tikzpicture}}} &
    $4 + \dfrac{2}{4} + \dfrac{2}{7}$ & $2 + \dfrac{3}{2}$ & 8 & $\dfrac{1}{2}$~\redstar \\ \hline
    \end{tabular}
    \caption{All possible $B_5$ blocks in $G$ and the estimation of $7f(B)+2n(B)-5e(B)$.}
    \label{tab:5block}
\end{table}

\begin{table}
    \renewcommand\arraystretch{2}\par
    \centering
    \setlength{\casebox}{1.8cm}
    \begin{tabular}{|c|c|c|c|c|c|c|}\hline
        \makebox[\casebox][c]{Case} &
        \makebox[0.7cm]{$B$} &
        \makebox[3.0cm]{Diagram} &
        \makebox[1.8cm]{$f(B)\leq$} &
        \makebox[1.4cm]{$n(B)\leq$} &
        \makebox[1.4cm]{$e(B)=$} &
        \makebox[2.9cm]{$7f+2n-5e\leq$} \\ \hline \hline
        \parbox[c]{\casebox}{\centering Lemma~\ref{lem:easyblock} \\ 2\ref{case2a}} &
        $B_{4,a}$   &
        \parbox[c]{1em}{%
            \renewcommand\arraystretch{2}\par%
            \centerline{%
                \begin{tikzpicture}[scale=0.2]
                    \coordinate (x1) at (-4,0);
                    \coordinate (x2) at (0,7);
                    \coordinate (x3) at (4,0);
                    \coordinate (x4) at (0,3);
                    \draw[very thick] (x1) -- (x2) -- (x3) -- (x1);
                    \draw[very thick] (x1) -- (x4) -- (x3);
                    \draw[very thick] (x2) -- (x4);
                    \draw[fill=black] (x1) circle(\circ);
                    \draw[fill=black] (x2) circle(\circ);
                    \draw[fill=black] (x3) circle(\circ);
                    \draw[fill=black] (x4) circle(\circ);
                    \node at (-4.5,-0.5) {};
                    \node at (4.5,7.5) {};
                \end{tikzpicture}}} &
        $3 + \dfrac{3}{7}$ & $2 + \dfrac{2}{2}$ & 6 & 0 \\ \hline
        \parbox[c]{\casebox}{\centering Lemma~\ref{lem:B4b} \\ \ref{case6a}} &
        $B_{4,b}$   &
        \parbox[c]{1em}{%
            \renewcommand\arraystretch{2}\par%
            \centerline{%
                \begin{tikzpicture}[scale=0.2]
                    \coordinate (x1) at (0,8);
                    \coordinate (x2) at (4,4);
                    \coordinate (x3) at (0,0);
                    \coordinate (x4) at (-4,4);
                    \coordinate (l2) at (7,4);
                    \coordinate (l4) at (-7,4);
                    \draw[red,very thick] (l4) -- (x4);
                    \draw[blue,very thick] (x1) -- (l2) -- (x3);
                    \draw[very thick] (x1) -- (x2) -- (x3) --    (x4) -- (x1);
                    \draw[very thick] (x2) -- (x4);
                    \draw[fill=black] (x1) circle(\circ);
                    \draw[fill=black] (x2) circle(\circ);
                    \draw[fill=black] (x3) circle(\circ);
                    \draw[fill=black] (x4) circle(\circ);
                    \draw[fill=black] (l2) circle(\circ);
                    \draw[fill=black] (l4) circle(\circ);
                    \node at (-7.5,-0.5) {};
                    \node at (7.5,8.5) {};
                \end{tikzpicture}}} &
        $2 + \dfrac{2}{4} + \dfrac{2}{7}$ & $1 + \dfrac{3}{2}$ & 5 & $-\dfrac{1}{2}$ \\ \hline
        \parbox[c]{\casebox}{\centering Lemma~\ref{lem:B4b} \\ \ref{case6b}\ref{case6bi}} &
        $B_{4,b}$   &
        \parbox[c]{1em}{%
            \renewcommand\arraystretch{2}\par%
            \centerline{%
                \begin{tikzpicture}[scale=0.2]
                    \coordinate (x1) at (0,8);
                    \coordinate (x2) at (4,4);
                    \coordinate (x3) at (0,0);
                    \coordinate (x4) at (-4,4);
                    \coordinate (l3) at (4,0);
                    \coordinate (l1) at (-4,8);
                    \draw[red,very thick] (l3) -- (x3);
                    \draw[red,very thick] (l1) -- (x1);
                    \draw[very thick] (x1) -- (x2) -- (x3) -- (x4) -- (x1);
                    \draw[very thick] (x2) -- (x4);
                    \draw[fill=black] (x1) circle(\circ);
                    \draw[fill=black] (x2) circle(\circ);
                    \draw[fill=black] (x3) circle(\circ);
                    \draw[fill=black] (x4) circle(\circ);
                    \draw[fill=black] (l3) circle(\circ);
                    \draw[fill=black] (l1) circle(\circ);
                    \node at (-7.5,-0.5) {};
                    \node at (7.5,8.5) {};
                \end{tikzpicture}}} &
        $2 + \dfrac{4}{7}$ & $2 + \dfrac{2}{2}$ & 5 & $-1$ \\ \hline
        \parbox[c]{\casebox}{\centering Lemma~\ref{lem:B4b} \\ \ref{case6b}\ref{case6bii}} &
        $B_{4,b}$   &
        \parbox[c]{1em}{%
            \renewcommand\arraystretch{2}\par%
            \centerline{%
                \begin{tikzpicture}[scale=0.2]
                    \coordinate (x1) at (0,8);
                    \coordinate (x2) at (4,4);
                    \coordinate (x3) at (0,0);
                    \coordinate (x4) at (-4,4);
                    \coordinate (l2) at (7,4);
                    \coordinate (l4) at (-7,4);
                    \coordinate (l1) at (-4,8);
                    \draw[red,very thick] (l1) -- (x1);
                    \draw[blue,very thick] (x1) -- (l2) -- (x3);
                    \draw[very thick] (x1) -- (x2) -- (x3) -- (x4) -- (x1);
                    \draw[very thick] (x2) -- (x4);
                    \draw[fill=black] (x1) circle(\circ);
                    \draw[fill=black] (x2) circle(\circ);
                    \draw[fill=black] (x3) circle(\circ);
                    \draw[fill=black] (x4) circle(\circ);
                    \draw[fill=black] (l2) circle(\circ);
                    \draw[fill=black] (l1) circle(\circ);
                   \node at (-7.5,-0.5) {};
                    \node at (7.5,8.5) {};
                \end{tikzpicture}}} &
        $2 + \dfrac{2}{4} + \dfrac{2}{7}$ & $2 + \dfrac{1}{3} + \dfrac{1}{2}$ & 5 & $\dfrac{1}{6}$~\redstar \\ \hline
        \parbox[c]{\casebox}{\centering Lemma~\ref{lem:B4b} \\ \ref{case6b}\ref{case6biii}} &
        $B_{4,b}$   &
        \parbox[c]{1em}{%
            \renewcommand\arraystretch{2}\par%
            \centerline{%
                \begin{tikzpicture}[scale=0.2]
                    \coordinate (x1) at (0,8);
                    \coordinate (x2) at (4,4);
                    \coordinate (x3) at (0,0);
                    \coordinate (x4) at (-4,4);
                    \coordinate (l2) at (7,4);
                    \coordinate (l4) at (-7,4);
                    \draw[blue,very thick] (x1) -- (l2) -- (x3);
                    \draw[blue,very thick] (x1) -- (l4) -- (x3);
                    \draw[very thick] (x1) -- (x2) -- (x3) -- (x4) -- (x1);
                    \draw[very thick] (x2) -- (x4);
                    \draw[fill=black] (x1) circle(\circ);
                    \draw[fill=black] (x2) circle(\circ);
                    \draw[fill=black] (x3) circle(\circ);
                    \draw[fill=black] (x4) circle(\circ);
                    \draw[fill=black] (l2) circle(\circ);
                    \draw[fill=black] (l4) circle(\circ);
                    \node at (-7.5,-0.5) {};
                    \node at (7.5,8.5) {};
                \end{tikzpicture}}} &
        $2 + \dfrac{2}{4} + \dfrac{2}{4}$ & $2 + \dfrac{2}{3}$ & 5 & $\dfrac{4}{3}$~\redstar \\ \hline
        \parbox[c]{\casebox}{\centering Lemma~\ref{lem:B4b} \\ 3\ref{case3a}} &
        $B_3$   &
        \parbox[c]{1em}{%
            \renewcommand\arraystretch{2}\par%
            \centerline{%
                \begin{tikzpicture}[scale=0.2]
                    \coordinate (x1) at (-3,0);
                    \coordinate (x2) at (0,5);
                    \coordinate (x3) at (3,0);
                    \coordinate (l1) at (-6,-1.5);
                    \coordinate (l2) at (0,8);
                    \coordinate (l3) at (6,-1.5);
                    \draw[red,very thick] (l1) -- (x1);
                    \draw[red,very thick] (l2) -- (x2);
                    \draw[red,very thick] (l3) -- (x3);
                    \draw[very thick] (x1) -- (x2) -- (x3) -- (x1);
                    \draw[fill=black] (x1) circle(\circ);
                    \draw[fill=black] (x2) circle(\circ);
                    \draw[fill=black] (x3) circle(\circ);
                    \draw[fill=black] (l1) circle(\circ);
                    \draw[fill=black] (l2) circle(\circ);
                    \draw[fill=black] (l3) circle(\circ);
                    \node at (-6.5,-2) {};
                    \node at (6.5,8.5) {};
                \end{tikzpicture}}} &
        $1 + \dfrac{2}{7} + \dfrac{1}{4}$ & $\dfrac{3}{2}$ & 3 & $-\dfrac{5}{4}$ \\ \hline
        \parbox[c]{\casebox}{\centering Lemma~\ref{lem:B4b} \\ 3\ref{case3b}}  &
        $B_3$   &
        \parbox[c]{1em}{%
            \renewcommand\arraystretch{2}\par%
            \centerline{%
                \begin{tikzpicture}[scale=0.2]
                    \coordinate (x1) at (-3,0);
                    \coordinate (x2) at (0,5);
                    \coordinate (x3) at (3,0);
                    \coordinate (l1) at (-6,-1.5);
                    \coordinate (l2) at (-2,7.5);
                    \coordinate (l4) at (2,7.5);
                    \coordinate (l3) at (6,-1.5);
                    \draw[red,very thick] (l1) -- (x1);
                    \draw[red,very thick] (l2) -- (x2);
                    \draw[red,very thick] (l4) -- (x2);
                    \draw[red,very thick] (l3) -- (x3);
                    \draw[very thick] (x1) -- (x2) -- (x3) -- (x1);
                    \draw[fill=black] (x1) circle(\circ);
                    \draw[fill=black] (x2) circle(\circ);
                    \draw[fill=black] (x3) circle(\circ);
                    \draw[fill=black] (l1) circle(\circ);
                    \draw[fill=black] (l2) circle(\circ);
                    \draw[fill=black] (l4) circle(\circ);
                    \draw[fill=black] (l3) circle(\circ);
                    \node at (-6.5,-2) {};
                    \node at (6.5,8.0) {};
                \end{tikzpicture}}} &
        $1 + \dfrac{3}{4}$ & $\dfrac{2}{2} + \dfrac{1}{3}$ & 3 & $-\dfrac{1}{12}$ \\ \hline
        \parbox[c]{\casebox}{\centering Lemma~\ref{lem:B4b} \\ 4\ref{case4a}} &
        $B_2$   &
        \parbox[c]{1em}{%
            \renewcommand\arraystretch{2}\par%
            \centerline{%
                \begin{tikzpicture}[scale=0.2]
                    \coordinate (x1) at (-3,0);
                    \coordinate (x3) at (3,0);
                    \coordinate (l1) at (-6,0);
                    \coordinate (l3) at (6,0);
                    \draw[red,very thick] (l1) -- (x1);
                    \draw[red,very thick] (l3) -- (x3);
                    \draw[very thick] (x1) -- (x3);
                    \draw[fill=black] (x1) circle(\circ);
                    \draw[fill=black] (x3) circle(\circ);
                    \draw[fill=black] (l1) circle(\circ);
                    \draw[fill=black] (l3) circle(\circ);
                    \node at (-6.5,-2.5) {};
                    \node at (6.5,2.5) {};
                \end{tikzpicture}}} &
        $\dfrac{1}{4} + \dfrac{1}{7}$ & $\dfrac{2}{2}$ & 1 & $-\dfrac{1}{4}$ \\ \hline
        \parbox[c]{\casebox}{\centering Lemma~\ref{lem:B4b} \\ 4\ref{case4b}} &
        $B_2$   &
        \parbox[c]{1em}{%
            \renewcommand\arraystretch{2}\par%
            \centerline{%
                \begin{tikzpicture}[scale=0.2]
                    \coordinate (x1) at (-3,0);
                    \coordinate (x3) at (3,0);
                    \coordinate (l1) at (-6,0);
                    \coordinate (l3) at (6,2);
                    \coordinate (l4) at (6,-2);
                    \draw[red,very thick] (l1) -- (x1);
                    \draw[red,very thick] (l3) -- (x3);
                    \draw[red,very thick] (l4) -- (x3);
                    \draw[very thick] (x1) -- (x3);
                    \draw[fill=black] (x1) circle(\circ);
                    \draw[fill=black] (x3) circle(\circ);
                    \draw[fill=black] (l1) circle(\circ);
                    \draw[fill=black] (l3) circle(\circ);
                    \draw[fill=black] (l4) circle(\circ);
                    \node at (-6.5,-2.5) {};
                    \node at (6.5,2.5) {};
                \end{tikzpicture}}} &
        $\dfrac{1}{4} + \dfrac{1}{7}$ & $\dfrac{1}{2} + \dfrac{1}{3}$ & 1 & $-\dfrac{7}{12}$ \\ \hline
        \parbox[c]{\casebox}{\centering Lemma~\ref{lem:B4b} \\ 4\ref{case4c}} &
        $B_2$   &
        \parbox[c]{1em}{%
            \renewcommand\arraystretch{2}\par%
            \centerline{%
                \begin{tikzpicture}[scale=0.2]
                    \coordinate (x1) at (-3,0);
                    \coordinate (x3) at (3,0);
                    \coordinate (l1) at (-6,2);
                    \coordinate (l2) at (-6,-2);
                    \coordinate (l3) at (6,2);
                    \coordinate (l4) at (6,-2);
                    \draw[red,very thick] (l1) -- (x1);
                    \draw[red,very thick] (l2) -- (x1);
                    \draw[red,very thick] (l3) -- (x3);
                    \draw[red,very thick] (l4) -- (x3);
                    \draw[very thick] (x1) -- (x3);
                    \draw[fill=black] (x1) circle(\circ);
                    \draw[fill=black] (x3) circle(\circ);
                    \draw[fill=black] (l1) circle(\circ);
                    \draw[fill=black] (l2) circle(\circ);
                    \draw[fill=black] (l3) circle(\circ);
                    \draw[fill=black] (l4) circle(\circ);
                    \node at (-6.5,-2.5) {};
                    \node at (6.5,2.5) {};
                \end{tikzpicture}}} &
        $\dfrac{1}{4} + \dfrac{1}{5}$ & $\dfrac{2}{3}$ & 1 & $-\dfrac{31}{60}$ \\ \hline
    \end{tabular}
    \caption{All possible $B_4,B_3$ and $B_2$ blocks in $G$ and the estimate of $7f(B)+2n(B)-5e(B)$.}
    \label{tab:432block}
\end{table}

\end{document}